\newcommand{\arxiv}[1]{\href{http://arxiv.org/abs/#1}{\texttt{arXiv:#1}}}
\newtheorem{theorem}{Theorem}[section]
\newtheorem*{theorem*}{Theorem}
\newtheorem{proposition}[theorem]{Proposition}
\newtheorem{lemma}[theorem]{Lemma}
\newtheorem{corollary}[theorem]{Corollary}
\newtheorem*{corollary*}{Corollary}
\newtheorem{conjecture}[theorem]{Conjecture}
\theoremstyle{definition}
\newtheorem{example}[theorem]{Example}
\newtheorem{remark}[theorem]{Remark} 
\newtheorem*{conjecture:main}{Conjecture \ref{conj:main}}
\numberwithin{equation}{section}
\def\<{\langle}
\def\>{\rangle}
\def\codim{\operatorname{codim}}
\def\End{\operatorname{End}}
\def\GL{\mathrm{GL}}
\def\Spec{\operatorname{Spec}}
\def\Sym{\operatorname{Sym}}
\def\Hom{\operatorname{Hom}}
\def\Hilb{\operatorname{Hilb}}
\def\Poly{P} 
\def\stab{\rho}
\def\Par{\operatorname{Par}}
\def\AA{\mathbf{A}} % for affine space
\def\a{\mathbf{a}} % for the grading
\def\B{\mathcal{B}}
\def\b{\mathbf{b}} % for the grading
\def\CC{\mathbf{C}}
\def\C{\mathcal{C}} % for cohomology classes, a la Miller--Sturmfels
\def\kk{\mathbf{k}}
\def\K{\mathcal{K}} % for K-polynomials
\def\NN{\mathbb{N}}
\def\OO{\mathcal{O}} 
\def\PP{\mathbb{P}}
\def\RR{\mathbf{R}}
\def\SS{\mathbf{S}}
\def\Ss{\mathfrak{S}}
\def\ZZ{\mathbb{Z}}
\def\rk{\operatorname{rk}} % rank function of a matroid
\def\cl{\overline}
\def\bw{{{\bigwedge}}} % makes the superscript appear in the position of an exponent, and now it auto-sizes 
\def\FakeDep{\textup{FakeDep}}
\def\Dep{\textup{Dep}}
\let\oldmarginpar\marginpar
\renewcommand\marginpar[1]{\-\oldmarginpar[\footnotesize #1]{\raggedright\footnotesize #1}}
\date{\today} \title{Matrix orbit closures}
\author{Andrew Berget} 
\address[A.~Berget]{Department of Mathematics\\
Western Washington Universty\\
Bellingham, WA}
\email{andrew.berget@wwu.edu}
\author{Alex Fink}
\address[A.~Fink]{School of Mathematical Sciences\\
  Queen Mary University of London\\
  United Kingdom}
\email{a.fink@qmul.ac.uk}
\begin{document}
\maketitle

\begin{abstract}
Let $G$ be the group $\GL_r(\CC) \times (\CC^\times)^n$.
We conjecture that the finely-graded Hilbert series of a
$G$ orbit closure in the space of $r$-by-$n$ matrices
is wholly determined by the associated matroid.
In support of this,
we prove that the coefficients of this Hilbert series
corresponding to certain hook-shaped Schur functions in the $\GL_r(\CC)$ variables
are determined by the matroid, and that the orbit closure
has a set-theoretic system of ideal generators whose combinatorics
are also so determined.
We also discuss relations between these Hilbert series for related matrices,
including their stabilizing behaviour as $r$ increases.
\end{abstract}

\section{Introduction}
In this paper we study a collection of affine varieties that we call
\textit{matrix orbit closures}, obtained as follows.  
Let $\kk$ be an algebraically closed field of characteristic zero
(this assumption can be relaxed in many but not all of our results). 
Pick an $r$-by-$n$ matrix $v$ and consider all
matrices which define a configuration of $n$ points in $\PP^{r-1}$
equivalent to~$v$, that is, which differ from $v$ only by row operations and rescalings
of columns.  The resulting collection is an orbit for the group 
$G=\GL_r(\kk)\times(\kk^\times)^n$,
the former factor acting on the left of the matrix space~$\AA^{r\times n}$
and the latter as diagonal matrices on the right.
The Zariski closure of this set is an (irreducible) affine variety denoted by $X_v$. 

The principal question we seek to
address is to what extent the matroid of~$v$
controls algebraic and geometric properties of~$X_v$. 
It is reasonable to expect some control, on the account that
there is a quotient of~$X_v\subseteq\AA^{r\times n}$ for which the control is very strong.
Let us assume that $v$ has the maximum possible rank~$r$; this is inessential but
makes the statement cleaner.  
Let $G(r,n)$ be the Grassmannian of $r$ dimensional subspaces of~$\kk^n$,
which is the target of the rational map $\pi:\AA^{r\times n}\to G(r,n)$ sending a matrix
to the span of its rows, and inherits the action of the torus $(\kk^\times)^n$.
Then $X_v$ is the closure of the preimage of (the closure of)
the torus orbit containing $\pi(v)$.
These torus orbit closures in~$G(r,n)$ are classified up to isomorphism
by the rank $r$ matroids on a fixed $n$~element set that are realizable over~$\kk$.
The class of $\pi(X_v)$ within the
zeroth torus-equivariant $K$-theory of $G(r,n)$ 
is also a function of the associated matroid, as shown by Speyer~\cite[Proposition~12.5]{speyer}.
Interest in these torus orbit closures antedates the above results:
Klyachko gave a formula for their equivariant cohomology classes in a special case~\cite{klyachko},
and Kapranov undertook a thorough study of the Chow quotient whose points represent them~\cite{kap}.

We prove in a companion paper \cite{cc0} that 
the $G$-equivariant Chow class of~$X_v$ is a function of the matroid of~$v$.
In this paper our specific interest is in a finer invariant,
its class in the $G$-equivariant $K$-theory group,
which contains the same information as the multigraded Hilbert series of the coordinate
ring of~$X_v$.  Our main conjecture is that this refinement adds no
distinguishing power:
\begin{conjecture:main}
  The $G$-equivariant $K$-class of~$X_v$ is determined by the matroid of~$v$.
\end{conjecture:main}
We fully resolve this conjecture here only for rank~2 uniform matroids.

The relation of $X_v$ to its quotient $G(r,n)$ is analogous to that between
\emph{matrix Schubert varieties} and (classical) Schubert varieties:
matrix Schubert varieties are closures of $B\times B$-orbits of square matrices,
$B$ being a Borel group, whereas Schubert varieties are closures of
$B$-orbits in the quotient of this space by~$B$.
The techniques of combinatorial commutative algebra have traction
in the setting of matrix Schubert varieties, which have been used to
great advantage by Fulton~\cite{fulton}, Knutson and Miller \cite{knutsonmiller},
and others since.  The techniques have also been adapted for other
varieties arising in Schubert calculus, such as the Richardson varieties \cite{knutson}.

Further motivation for introducing this $K$-class comes from studying the
general linear group representation generated by the tensor product of
the columns of~$v$, which we call the \textit{tensor module} of~$v$.
The tensor module appears as a multigraded component of the coordinate ring of~$X_v$;
indeed, all other multigraded components are tensor modules of 
configurations obtained from~$v$.
Tensor modules have previously been an object of study in the guise of the
question whether symmetrizations of decomposable tensors are zero, 
which attracted the interest of Gamas \cite{Gamas}, Dias da Silva \cite{dds}, and others.

One of our two main results, Theorem~\ref{thm:hook in tensor module}, 
uses the representation-theoretic perspective on the tensor module
to describe certain coefficients in terms of matroidal combinatorics, namely non-broken circuits.
The coefficients in question are those which are
Schur functions of hook shape, $s_\lambda$ where $\lambda=(n-k+1,1^{k-1})$,
in the $GL_n$ variables and squarefree in the torus variables.
Its translation to the setting of the equivariant $K$-class of~$X_v$, Theorem~\ref{thm:hooks in K},
is particularly pleasant: the corresponding terms are 
a multigraded enumerator of dependent sets of the matroid.
This provides an explicit affirmation of part of Conjecture~\ref{conj:main}:
all hook-shape coefficients in the $K$-class are matroid invariants.

Our other main result, Theorem~\ref{thm:dependent tensors},
gives a generating set for the ideal of $X_v$ up to radical,
the construction of whose generators involves only matroid combinatorics.
When $n$ has a uniform matroid of rank~2 (Proposition~\ref{prop:I'_v is known prime 2}) 
or corank~2 (Proposition~\ref{prop:I'_v is known prime (n-2)})
we prove that our ideal is reduced, i.e.\ is the ideal of~$X_v$.

The structure of our paper is as follows. In Section~\ref{sec:matroids}
we recall some background on matroid theory. 
In Section~\ref{sec:pointsOfOrbitClosures} we classify the points of~$X_v$. 
This enters into the proof of Theorem~\ref{thm:dependent tensors} in the next section.
Section~\ref{sec:K} is dedicated to Conjecture~\ref{conj:main},
on the relationship between the matroid of $v$ and the
$K$-class of $X_v$, and its affirmative resolution in the rank~2 uniform
case, Proposition~\ref{prop:K-polynomial of U_2,n}. 

In Section~\ref{sec:stabilization} we consider the problem of
studying $X_v$ when $v$ is of some rank $r'<r$. Letting $v'$ denote a
matrix whose rows are a basis for the row span of $v$, we relate the
Hilbert series and $K$-polynomials of $X_v$ and $X_{v'}$. We call
$X_v$ the \textit{stabilization} of $X_{v'}$, since $X_v$ is obtained
by embedding $X_{v'}$ in $\AA^{r \times n}$, by adding $r-r'$ rows
equal to zero, and then taking the $\GL_r(\kk)$ orbit of $X_{v'}$. 
A similar operator in cohomology is called a ``raising'' operator in \cite{fnr}.
Stabilization has a trivial effect on Hilbert series (Lemma~\ref{lem:row of zeros});
its translation to $K$-classes (Lemma~\ref{prop:row of zeros}) 
has a less transparent appearance.
Section~\ref{sec:matroid constructions} discusses operations on~$v$
whose effect on the $K$-class we can describe.
One of these is the direct sum of matrices, for which 
stabilization plays a central role;
another is duplicating a column of~$v$.

Finally, we turn to the tensor module in Section~\ref{sec:tensor module},
%which is the span of the $\GL_r(\kk)$ orbit of a decomposable tensor
%$v_1\otimes \dots \otimes v_n \in (\kk^r)^{\otimes n}$. 
introducing several fundamental properties of this module including a
Schur--Weyl dual representation.  These are used in the next section to prove
Theorems \ref{thm:hook in tensor module} and~\ref{thm:hooks in K}.

\subsection*{Conventions}
A \emph{variety} is taken to be an integral scheme of finite type over $\kk$.

\section{Matroid theory background}\label{sec:matroids}
White's \textit{Theory of Matroids} \cite{white} serves as an
excellent reference for the matroid theory needed here. For the
convenience of the reader, we gather the required notions in this
section.

A \textbf{matroid} is a simplicial complex $M$ on a finite
\textit{ground set} $E$ whose faces satisfy the following
\textit{exchange axiom}: for faces $I$ and $I'$ of $M$, if $|I|< |I'|$
then there is some $e \in I' \setminus I$ such that $I \cup \{e\}$ is
a face of $M$. Two matroids are isomorphic if they are isomorphic as
simplicial complexes: that is, if there is a bijection between their
ground sets inducing a bijection between their faces. We will refer to
the isomorphism type of a matroid as an \textbf{unlabeled matroid}.

For any matrix $v \in \AA^{r \times n}$ the {matroid} of $v$, denoted
$M(v)$, is the simplicial complex whose faces are those $I \subset
[n]$ such that the columns of $v$ indexed by $I$ are linearly
independent.  Any matrix in the orbit $G\cdot v$ has the same matroid
as $v$. The set of matrices in $\AA^{r \times n}$ with a prescribed
matroid is a subscheme of $\AA^{r \times n}$ called a \textbf{matroid
  stratum} or a \textbf{matroid realization space}.  It is a result
of Sturmfels \cite{sturmfels} that this is not a stratification in any
nice sense (particularly that of Whitney). Worse, a matroid stratum
can contain arbitrarily complicated singularities, a result referred
to as Mn\"ev--Sturmfels universality \cite{vakil}.

Matroids that can be written as $M(v)$ for some $v \in \AA^{r \times
  n}(\kk)$ are said to be \textbf{realizable} over $\kk$. The faces
and non-faces of $M$ are called \textit{independent} and
\textit{dependent} sets, respectively. The minimal dependent sets are
called \textit{circuits} and the maximal independent sets are called
\textit{bases}.

The \textbf{uniform matroid} of rank $r$ on $n$ elements, $U_{r,n}$, is
the matroid with ground set $[n]$ whose bases are all $r$ element
subsets of $[n]$. It is the matroid of a generic element of $\AA^{r
  \times n}$.

We denote the rank of a matrix $v$ by $\rk(v)$. 
The \textbf{rank} $\rk(M)$ of a matroid $M$ 
is the cardinality of a maximal independent set.
In particular, $\rk(M(v)) = \rk(v)$.  
On many occasions we will assume that the rank of 
matroids we deal with is full, i.e., equals $r$.  
In particular, when we state the hypothesis ``$v$ has a uniform
matroid'', we mean uniform of rank~$r$.

For any $v \in \AA^{r \times n}$, its \textbf{Gale dual} is any
$v^\perp \in \AA^{(n-\rk(v)) \times n}$ whose rows form a basis for
the (right) kernel of $v$. Thus, the Gale dual is determined up to the
action of $\GL_{n-\rk(v)}(\kk)$ on $\AA^{(n-\rk(v)) \times n}$. If $v$ has
full rank then Gale duality really is a duality, $\GL_r(\kk)
(v^\perp)^\perp = \GL_r(\kk) v$.  To a matroid $M$ we associate a
\textbf{dual matroid} $M^*$ whose bases are complements of bases of
$M$. If $M(v)$ is the matroid of a matrix $v$, then $M(v)^*$ is the
matroid $M(v^\perp)$ of the Gale dual of~$v$.

The \textbf{direct sum} of two matroids on disjoint sets is the join
of the two simplicial complexes. A matroid is said to be
\textbf{connected} if it is indecomposable with respect to this
operation.  Any matroid $M$ can be written uniquely as a direct sum of
connected matroids, the constituents of which are called the
\textbf{connected components} of $M$. A \textbf{coloop} of $M$ is an
element of $E$ in every base of $M$ and a \textbf{loop} of $M$ is an
element of $E$ in no base of $M$.

The \textbf{rank partition} of $M$ is the sequence of numbers $\lambda(M)
= (\lambda_1,\lambda_2,\dots)$ determined by the condition that for all $k
\geq 1$, its $k$th partial sum is the size of the largest union of $k$
independent sets of $M$. It is a theorem of Dias da Silva \cite{dds}
that $\lambda(M)$ is a partition (i.e., it is weakly decreasing). If $M$
is loop-free then $\lambda(M)$ is the maximum partition $\lambda$ in
dominance order such that $M$ can be $E$ can be partitioned into
independent sets of sizes $\lambda_1,\lambda_2,\dots$\ .

The \textbf{restriction} of $M$ to a subset $J \subset E$, denoted
$M|J$, consists of those independent sets belonging to $J$.
The \textbf{contraction} of $M$ by $J$ is $(M^*|J^{\rm c})^*$,
where $J^{\rm c} = E \setminus J$, and is denoted $M/J$. If $M=M(v)$ is
realizable then $M/J$ is obtained as follows. Let $A \in \End(\kk^r)$
be a matrix whose kernel is spanned by $\{v_j : j \in J\}$ and is
generic with respect to this property. Then $M/J$ is the matroid of
$Av$, with columns $J$ deleted.

If there is a matroid $M'$ with ground set $E' \supset E$ such that $M
= M'|E$ then $M'/(E' \setminus E)$ is said to be a \textbf{quotient}
of $M$. It follows that every quotient of a realizable matroid is
again realizable.

Let the indicator vector of a subset $B$ of $[n]$ be $e_B = \sum_{i
  \in B} e_i$.  The \textbf{matroid (base) polytope} $\Poly(M)$ of a
matroid $M$ with ground set $[n]$, essentially due to Edmonds~\cite{edmonds}, 
is the convex hull of the indicator
vectors of the bases of $M$ in $\RR^n$.  It is a theorem of 
Gel'fand, Goresky, MacPherson and Serganova~\cite{GGMS}
that, among non-empty polytopes $P$ with vertices
chosen from the set $\{e_B : B \subset [n]\}$, matroid polytopes are
exactly those that lie in a plane where the coordinates sum to a
positive integer and every edge of $P$ has the form
$\operatorname{conv}\{e_B,e_{B \cup j \setminus i}\}$ for some
$B\subset[n]$ and $i \in B$, $j \notin B$.

\section{The points of a matrix orbit closure}\label{sec:pointsOfOrbitClosures}
In this section we discuss the geometry of the matrix orbit closures
$X_v$ with respect to the $G$ orbits they comprise.

\begin{proposition}
  The closure of a $G$-orbit in $\AA^{r \times n}$ is an
  irreducible affine variety. If $v$ has a matroid of rank $r$ with
  $c$ connected components, then 
  $$
  \dim( X_v ) = r^2 +n -c.
  $$
\end{proposition}
\begin{proof}
  Since $G$ is a connected linear algebraic group the first claim
  follows. The second follows since the stabilizer of $v$ is seen to
  be a $c$-dimensional torus inside the diagonal torus of $G$.
\end{proof}

Let $(\AA^{r \times n})^{\rm fr}$ denote the open subvariety of full
rank matrices in $\AA^{r \times n}$. There is a $\GL_r$ bundle $\pi:
(\AA^{r \times n})^{\rm fr} \to G(r,n)$, which takes a matrix to its
row span. Consider the case that $v \in (\AA^{r \times n})^{\rm
  fr}$. Then $\cl{\pi(v) T} \subset G(r,n)$ is the (normal) toric
variety associated to the matroid polytope of $M(v)$. The $T$-orbits
in $\cl{\pi(v)T}$ are in bijection with the faces of the matroid base
polytope $P(M(v))$. One can give a combinatorial description of the
faces of the matroid polytope as follows \cite[Proposition~2]{ak}. Let
$S_\bullet$ be a flag of subsets
\[
\emptyset = S_0 \subset S_1 \subset \dots \subset S_k \subset S_{k+1}
= [n].
\]
Every face of $\Poly(M(v))$ is of the form $\Poly(M(v)_{S_\bullet})$
where
\[
M(v)_{S_\bullet} = \bigoplus_{i=1}^{k+1} (M(v)|S_i)/S_{i-1}.
\]
Two different flags can produce the same matroid, but there is only
one $T$-orbit in $\cl{\pi(v)T}$ with a given matroid.  A realization
of this result in terms of torus orbit closures is obtained as
follows. Rescale column $i \in S_j \setminus S_{j-1}$ of $v$ by
$s^{j-1}$. Projecting this matrix into $G(r,n)$ we obtain a subspace
$\pi(v)\lambda(s)$, where $\lambda(s)$ is a one-parameter subgroup of
$T$, i.e., an element of $T(\kk(\!(s)\!))$. Here $\kk(\!(s)\!)$ is the
field of Laurent series in $s$ over $\kk$. Taking the limit $\lim_{s
  \to 0} \pi(v)\lambda(s)$ yields a point of $\cl{\pi(v)T}$ with
matroid $M(v)_{S_\bullet}$. Every $T$-orbit in $\cl{\pi(v) T}$ is
reached in this way and so our argument is complete.

The pullback $\pi^{-1}( \lim_{s \to 0} \pi(v)\lambda(s) )$ is the
$G$-orbit of a full rank matrix in $X_v$ whose matroid is
$M(v)_{S_\bullet}$. We call any such matrix a \textbf{projection of
  $v$ along the flag $S_\bullet$}. As before, there is only one
$G$-orbit in $X_v$ whose points have a prescibed matroid of the form
$\bigoplus_{i=1}^k M(v)|S_i/S_{i-1}$.

The next result shows that all elements of $X_v$ are
obtained by projecting $v$ along some flag and applying some element
$g \in \End(\kk^r)$ on the left.
\begin{proposition}\label{prop:projecting full rank configurations}
  Suppose that $v$ has rank $r$ and $w \in X_v$ is a matrix of rank
  less than $r$. Then there is a matrix $w' \in X_v$ whose rank is
  that of $v$, and $w = g w'$ for some singular $g \in \End( \kk^r)$.
\end{proposition}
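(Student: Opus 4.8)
Here is how I would prove Proposition~\ref{prop:projecting full rank configurations}.

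The plan is to realize $w$ as the special fibre of a one-parameter family of rank-$r$ matrices lying in $X_v$, and then to extract the singular endomorphism $g$ from a Smith normal form computation over the valuation ring of that family. The only features of $X_v$ this uses are that it is a closed, irreducible $G$-stable subvariety of $\AA^{r\times n}$ whose generic matrix has rank $r$; the last point holds because $X_v^\circ=\GL_r(\kk)\,v\,T$ consists of rank-$r$ matrices and is dense in $X_v$, so the rank-$r$ locus of $X_v$ is dense and open. First, since $w\in X_v$, choose a discrete valuation ring $\OO$ over $\kk$ with residue field $\kk$, fraction field $K$ and uniformizer $s$, together with a morphism $\Spec\OO\to X_v$ taking the closed point to $w$ and the generic point to the generic point of $X_v$; write $\phi$ for the resulting $r\times n$ matrix with entries in $\OO$, so that $\phi$ reduces modulo $s$ to $w$ and has rank $r$ over $K$. (If $\kk$ is not algebraically closed one first extends scalars; we pass over this.)

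Next I would apply Smith normal form over the principal ideal domain $\OO$ to $\phi$. As $r\le n$ and $\phi$ has rank $r$ over $K$, there exist $P\in\GL_r(\OO)$, $Q\in\GL_n(\OO)$ and integers $0\le f_1\le\dots\le f_r$ with
\[
\phi=P\,[\,\Delta\mid 0_{r\times(n-r)}\,]\,Q,\qquad \Delta:=\operatorname{diag}(s^{f_1},\dots,s^{f_r}).
\]
Put $\psi:=P\,[\,I_r\mid 0\,]\,Q$ and $E:=P\,\Delta\,P^{-1}$; both have entries in $\OO$ — for $E$ because every $f_i\ge 0$ — and from $\Delta\,[\,I_r\mid 0\,]=[\,\Delta\mid 0\,]$ we obtain the factorization $\phi=E\,\psi$ over $\OO$. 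Over $K$ the matrix $E$ is invertible, so $\psi=E^{-1}\phi$; as $\phi$ gives a $K$-point of $X_v$ and $X_v$ is $G$-stable (so $\GL_r(K)$ preserves its $K$-points), $\psi$ too gives a $K$-point of $X_v$, that is, $\psi$ satisfies the defining equations of $X_v$ over $K$. These equations have coefficients in $\kk\subseteq\OO$, so the reduction $w':=\psi\bmod s=P(0)\,[\,I_r\mid 0\,]\,Q(0)$ lies in $X_v$ as well, and $w'$ has rank $r=\rk(v)$, since $[\,I_r\mid 0\,]Q(0)$ consists of the first $r$ rows of the invertible matrix $Q(0)$, left-multiplied by the invertible matrix $P(0)$.

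Reducing $\phi=E\,\psi$ modulo $s$ finally yields $w=g\,w'$ with
\[
g:=E\bmod s=P(0)\operatorname{diag}(\epsilon_1,\dots,\epsilon_r)\,P(0)^{-1},\qquad \epsilon_i=\begin{cases}1,&f_i=0,\\ 0,&f_i>0.\end{cases}
\]
Because $w'\colon\kk^n\to\kk^r$ is surjective (it has rank $r$), we have $\rk(g)=\rk(g\,w')=\rk(w)<r$, so $g\in\End(\kk^r)$ is singular, while $w'\in X_v$ has the rank of $v$; this establishes the proposition. Together with the description, recalled just before the statement, of the full-rank matrices of $X_v$ as the projections of $v$ along flags, it follows that every matrix of $X_v$ is such a projection, left-multiplied by an endomorphism of $\kk^r$.

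I expect the main obstacle to be the Smith-normal-form step: one has to see that, along the degeneration, the drop of rank at $s=0$ is concentrated in the single left factor $E$, whose special fibre is a singular endomorphism, while the complementary factor $\psi$ continues to trace an arc inside $X_v$ — which is where $G$-stability of $X_v$ enters, and is the reason that the \textit{a priori} complicated locus of low-rank matrices picked up in $X_v$ cannot obstruct the statement. Constructing the arc is routine, and no properness of $(\AA^{r\times n})^{\rm fr}\hookrightarrow\AA^{r\times n}$ is required.
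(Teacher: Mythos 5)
Your proof is correct. Both you and the paper start from the same basic move: produce an arc in $X_v$ (a DVR- or $\kk[[s]]$-point) whose closed fibre is $w$ and whose generic fibre has rank $r$, then peel off the rank drop at the special fibre into a left-acting singular endomorphism. Where the two arguments part ways is in how the peeling is done. The paper lifts the arc to $G(\kk(\!(s)\!))$, normalizes so the first $\ell=\rk(w)$ columns are standard basis vectors, divides row $i$ by the smallest power $s^{\nu_i}$ occurring there, takes the limit to get a matrix $w'$ of strictly larger rank, and then inducts on rank, composing the intermediate singular endomorphisms. You instead take a Smith normal form $\phi = P\,[\Delta\,|\,0]\,Q$ over the DVR $\OO$ and read off the factorization $\phi = (P\Delta P^{-1})(P[I_r\,|\,0]Q) = E\,\psi$ in one shot: $\psi\bmod s$ is your $w'$ of rank $r$, $E\bmod s$ is the singular $g$, and $\psi$ stays inside $X_v$ because $E^{-1}\in\GL_r(K)$ and $X_v$ is $\GL_r$-stable and closed. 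This is a genuine simplification: it removes the induction entirely, and it avoids having to lift the arc to $G$ (so you only need $X_v$ closed, irreducible, $\GL_r$-stable, and generically of rank $r$, not that $X_v^\circ$ is a $G$-orbit). The one step you treat a bit lightly — obtaining the arc $\Spec\OO\to X_v$ with residue field $\kk$ and with full-rank generic fibre — is exactly the step the paper also treats lightly (their appeal to ``the valuative criterion for properness'' is really the same arc-existence assertion), so you are no worse off; if $\kk$ is not algebraically closed one should either extend scalars or note that only an arc landing generically in the rank-$r$ locus (rather than at the generic point of $X_v$) is needed, which slightly eases the residue-field issue.
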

\begin{proof}
  Let $V = \AA^{r \times n}$ and suppose that $w$ has rank
  $\ell$. After applying an element of $\GL_r$, and relabeling the
  columns of our matrices, we may assume that $w$ has a row equal to
  zero and its first $\ell$ columns are the first $\ell$ standard
  basis vectors. By the valuative criterion for properness, there is
  an element $(g(s),t(s))$ of $G(\kk(\!(s)\!))=\GL_r(\kk(\!(s)\!)) \times T(\kk(\!(s)\!))$ such
  that $g(s)vt(s) \in \kk[[s]] \otimes_\kk V$ and
  \[ 
  g(s) v t(s) \equiv w \mod s. % s\kk[[s]] \otimes_\kk V \subset \kk(\!(s)\!) \otimes_\kk V.  
  \]

  Applying an element of $\GL_r( \kk[[a]])$ we may assume that the
  first $\ell$ columns of $g(s)vt(s)$ are the first $\ell$ standard
  basis vectors. Let $\nu_i$ be the least of the non-negative integers
  that appears as an exponent in row $i$ of $g(s) v t(s)$. The limit
  of
  \[ 
  \operatorname{diag}(s^{-\nu_1},\dots,s^{-\nu_r}) (g(s) v t(s)) 
  \]
  as $s \to 0$ gives an element $w'$ that has rank strictly larger
  than $w$. If the rank of $w'$ is not the rank of $v$ then, by
  induction, there is some $w'' \in X_v$ of rank $r$ and $g
  \in \End(\kk^r)$ such that $g w'' = w'$. Applying an element of
  $\End(\kk^r)$ that zeros out the appropriate rows, we bring $w'$ to
  $w$.
\end{proof}

\begin{corollary}\label{cor:matroids appearing in an orbit closure}
  If $w \in X_v$ then there is a flag of sets $S_\bullet$
  such that the matroid of $w$ is a quotient of
  \[
  \bigoplus_{i=1}^{k+1} (M(v)|S_i)/S_{i-1}.
  \]
  Conversely, every quotient of such a matroid occurs as the matroid
  of some $w \in X_v$.
\end{corollary}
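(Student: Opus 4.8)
The standing hypothesis of this subsection is that $v$ has rank $r$, so the discussion preceding Proposition~\ref{prop:projecting full rank configurations}, together with that proposition, identifies $X_v$ with the set of matrices $g\,w'$ where $w'$ ranges over projections of $v$ along flags $S_\bullet$ and $g$ ranges over $\End(\kk^r)$: by the proposition every $w \in X_v$ of rank less than $r$ has this form, a full-rank $w\in X_v$ is itself a projection along a flag (take $g$ the identity), and conversely every such $g\,w'$ lies in $X_v$, since $X_v$ is closed and $\GL_r(\kk)$-stable and hence stable under $\overline{\GL_r(\kk)}=\End(\kk^r)$ (the action morphism $\GL_r(\kk)\times X_v\to X_v$ extends to $\End(\kk^r)\times X_v\to\AA^{r\times n}$, and its image, being that of a dense subset, lies in $\overline{X_v}=X_v$). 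Because a projection of $v$ along $S_\bullet$ has matroid $M(v)_{S_\bullet}=\bigoplus_{i=1}^{k+1}(M(v)|S_i)/S_{i-1}$, the corollary is equivalent to the assertion that, for a rank-$r$ configuration $w'$, the matroids $M(g\,w')$ with $g$ running over $\End(\kk^r)$ are \emph{exactly} the quotients of $M(w')$.

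For the inclusion ``$\subseteq$'', given $g\in\End(\kk^r)$ set $d=\dim\ker g$ and let $\tilde w$ be the configuration on $[n]\sqcup D$ obtained by appending to $w'$ a basis of $\ker g\subseteq\kk^r$ indexed by a $d$-element set $D$. Then $M(\tilde w)|[n]=M(w')$ by definition of restriction, and by the linear description of contraction recalled in Section~\ref{sec:matroids} --- using a generic $A\in\End(\kk^r)$ with $\ker A=\ker g$ --- the matroid $M(\tilde w)/D$ is that of $(Aw'_1,\dots,Aw'_n)$; since $A$ and $g$ have the same kernel, they induce the same map $\kk^r\to\kk^r/\ker g$ up to injections, so this matroid equals $M(g\,w')$. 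Hence $M(g\,w')=M(\tilde w)/D$ is a quotient of $M(\tilde w)|[n]=M(w')$.

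For ``$\supseteq$'', let $N$ be a quotient of $M_0:=M(w')$; I would induct on $\rk M_0-\rk N$ to produce a linear map $g$ with $M(g\,w')=N$ (and then rescale the codomain to an endomorphism of $\kk^r$, which does not change the matroid). The equal-rank case is immediate, a rank-preserving quotient being an isomorphism of matroids. In the inductive step, factor the quotient $M_0\to N$ through an intermediate matroid $N_1$ with $\rk N_1=\rk N+1$, so that $N_1$ is a quotient of $M_0$ and $N$ is an elementary quotient of $N_1$ (this factorization into elementary quotients is standard; see \cite{white}). By induction $N_1=M(u)$ with $u=g_1w'$ for some linear $g_1$; writing $\mathcal{C}$ for the modular cut of $M(u)$ with $N=M(u)_{\mathcal C}$ and $F_1,\dots,F_m$ for its minimal flats, it remains to find a line $\ell\subseteq\kk^{\rk u}$ contained in $\Span\{u_i:i\in F\}$ for precisely the flats $F\in\mathcal C$; then $N=M(h\circ u)$ with $h$ the projection modulo $\ell$, and $g=h\circ g_1$ works. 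One takes $\ell$ generic inside $V:=\bigcap_j\Span\{u_i:i\in F_j\}$ and checks, from the axioms of a modular cut, that $V\neq 0$ and that $V\not\subseteq\Span\{u_i:i\in F\}$ for every flat $F\notin\mathcal C$, so that a valid $\ell$ exists because $\kk$ is infinite.

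The two reductions, and the identification of $X_v$ with $\{g\,w'\}$, are routine given Section~\ref{sec:pointsOfOrbitClosures}. The genuinely substantive point --- and the one I expect to be the main obstacle --- is the last step of ``$\supseteq$'': establishing that the modular-cut combinatorics force $V$ to be nonzero and to meet each $\Span\{u_i:i\in F\}$ ($F\notin\mathcal C$) properly, i.e. that an elementary quotient of a $\kk$-realizable matroid is realized by a linear image of any given realization. This is the ``compatible'' strengthening of the fact, already cited in Section~\ref{sec:matroids}, that quotients of realizable matroids are realizable; I would either locate a matroid-theory reference for it or carry out the modular-cut verification in place.
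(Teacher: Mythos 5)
Your proof of the forward direction is a correct and careful elaboration of the paper's own sketch: it identifies $X_v$ with the set of matrices $g\,w'$, where $w'$ ranges over projections of $v$ along flags and $g$ over $\End(\kk^r)$, using Proposition~\ref{prop:projecting full rank configurations} plus the dense-orbit argument for $\End(\kk^r)$-stability, and then shows $M(g\,w')$ is a quotient of $M(w')$ by appending a basis of $\ker g$ as new columns and contracting them.  That part is solid.

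The converse, however, contains a genuine gap, precisely at the spot you flagged as the ``main obstacle'': the claim that, for an elementary quotient of $M(u)$ given by a modular cut $\mathcal C$ with minimal flats $F_1,\dots,F_m$, the space $V=\bigcap_j\Span\{u_i:i\in F_j\}$ must be nonzero.  The modular-cut axioms are purely combinatorial and impose nothing on the realization.  Concretely, let $u\in\AA^{3\times 6}$ be a \emph{generic} realization of $U_{3,6}$ and take the modular cut $\mathcal C=\{\{1,2\},\{3,4\},\{5,6\},[6]\}$ (the three rank-$2$ flats are not pairwise modular pairs, so the modular-cut axioms hold vacuously for them).  Its elementary quotient $N$ is the rank-$2$ matroid on $[6]$ with parallel classes $\{1,2\},\{3,4\},\{5,6\}$.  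Realizing $N$ as $M(hu)$ requires $\ker h\subseteq\Span(u_1,u_2)\cap\Span(u_3,u_4)\cap\Span(u_5,u_6)$, which is $\{0\}$ for generic $u$; so $V=0$ here, and $N$, although a quotient of $M(u)$, is not $M(hu)$ for any $h$.  Thus the ``compatible strengthening'' you hoped to cite does not exist.  Since the trivial flag $\emptyset\subset[n]$ gives $\bigoplus(M(v)|S_i)/S_{i-1}=M(v)$, this also shows that the converse direction as literally stated (with the paper's definition of quotient) cannot be established by this route; the paper's own one-line justification --- that contractions are realized by applying $\End(\kk^r)$ --- elides exactly this issue, namely that the defining extension $M'$ of an abstract quotient need not be realizable by adjoining columns to the particular realization in hand.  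Any correct proof of the converse will have to either restrict the class of quotients to those whose single-element extensions are compatible with the given realization, or prove separately that for the matroids $\bigoplus(M(v)|S_i)/S_{i-1}$ and the realizations arising in $X_v$ no incompatible modular cuts occur --- which the example above suggests is not the case.
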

\begin{proof} Combining the remarks above about faces of the matroid
  polytope $P(M(v))$ with Proposition~\ref{prop:projecting full rank
    configurations}, we obtain the first claim. The converse follows
  since the contraction of a matroid by a set of elements is realized
  by applying an element of $\End(\kk^r)$ to a vector configuration in
  $\kk^r$. This means that every quotient of $\bigoplus_{i=1}^{k+1}
  (M(v)|S_i)/S_{i-1}$ is the matroid of a point in the
  $\End(\kk^r)$-orbit of $\pi^{-1}(\cl{\pi(v) T})$.
\end{proof}

\begin{example}
  The correspondance between matroids and orbits in $X_v$
  is not in general bijective as the following example shows. If
  \[
  v =
  \begin{bmatrix}
    1 & 0 & 0 & 1 & 1 \\
    0 & 1 & 0 & 1 & 1 \\
    0 & 0 & 1 & 0 & 1
  \end{bmatrix}
  \]
  Then for every $\mu \in \kk$,
  \[
    \begin{bmatrix}
    1 & 0 & 0 & 1 & 1 \\
    0 & 1 & \mu-1 & 1 & \mu \\
    0 & 0 & 1 & 0 & 1
  \end{bmatrix} \in X_v^\circ  \implies
  \begin{bmatrix}
    1 & 0 & 0 & 1 & 1 \\
    0 & 1 & 0 & 1 & \mu \\
    0 & 0 & 0 & 0 & 0
  \end{bmatrix} \in X_v.
  \]
  The matrices of the latter form are all projectively
  inequivalent. This stands in contrast to the situation with
  $\cl{\pi(v) T}$, where orbits are in bijection with the
  matroids of the points in the orbit closure.
\end{example}

\section{The ideal of a matrix orbit closure}\label{sec:ideal}
Let $R = \kk[x_{ij} : i \in [r], j \in [n]]$, and regard $\Spec(R) =
\AA^{r \times n}$. Since a matrix orbit closure $X_v$ is irreducible
in $\AA^{r \times n}$, it is the vanishing locus of a prime ideal $I_v
\subset R$. In this section we discuss this ideal. Our main result is
Theorem~\ref{thm:ideal generators}, which gives a finite generating
set for an ideal $I'_v$ given by minors of certain matrices, for which
$\sqrt{I'_v} = I_v$.

\subsection{The ideal $I'_v$}
We now give the polynomial conditions for a matrix to lie in
$X_v$. Recall from Section~\ref{sec:matroids} the notion of
Gale duality. For $v \in \AA^{r \times n}$, its Gale dual is any
$v^\perp \in \AA^{(n-\rk(v)) \times n}$ whose rows form a basis for
the kernel of $v$. For any $w=(w_1,\dots,w_n) \in X_v^\circ$, the
vectors
\[
  w_1 \otimes v^\perp_1, \quad w_2 \otimes v^\perp_2 ,\quad \dots ,
  \quad w_n \otimes v^\perp_n
\]
are linearly dependent. This can be seen by expanding a linear
combination in the standard basis of $\kk^r \otimes \kk^{n-r}$. By
continuity this holds for any $u \in X_v$. More is true:

\begin{proposition}[{Kapranov \cite{kap}}]\label{prop:kap-tensors}
  Suppose that $w \in \AA^{r \times n}$ has a connected matroid of
  full rank. If the collection of tensors
  \[
  w_1 \otimes v^\perp_1, \quad w_2 \otimes v^\perp_2 ,\quad \dots ,
  \quad w_n \otimes v^\perp_n
  \]
  forms a circuit in $\kk^r \otimes \kk^{n-r}$ then $w \in X_v^\circ$.
\end{proposition}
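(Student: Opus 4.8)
The plan is to reverse-engineer the matrix $w$ from the putative circuit relation among the tensors $w_i \otimes v^\perp_i$ and check that it lands in $X_v^\circ$. By hypothesis the $n$ tensors $w_i \otimes v^\perp_i$ satisfy a unique (up to scalar) linear dependence $\sum_i c_i \, w_i \otimes v^\perp_i = 0$ whose support is all of $[n]$; in particular every $c_i$ is nonzero, so after rescaling the columns of $w$ by $c_i$ we may assume $\sum_i w_i \otimes v^\perp_i = 0$. Rescaling columns is exactly acting by $T$, so it suffices to prove that this rescaled $w$ lies in the $\GL_r(\kk)$-orbit of $v$. The relation $\sum_i w_i \otimes v^\perp_i = 0$ says precisely that the linear map $\kk^{n} \to \kk^r$ with matrix $w$ vanishes on the row space of $v^\perp$, i.e. that $\ker(v^\perp) \subseteq \ker(w)$ when we think of these as maps out of $\kk^n$; equivalently, the row space of $w$ is contained in the row space of $v$. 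Since $w$ has full rank $r$, which is also the rank of $v$, this containment is an equality of row spaces, so $w = g v$ for some $g \in \GL_r(\kk)$.

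The one gap in the paragraph above is the step $\sum_i w_i \otimes v^\perp_i = 0 \iff \mathrm{rowspace}(w) \subseteq \mathrm{rowspace}(v)$. I would spell this out as follows. The orthogonality $v (v^\perp)^{\mathsf T} = 0$ means $\sum_i v_i \otimes v^\perp_i$ lies in the kernel of the contraction pairing that any $\phi \in (\kk^r)^*$ in the row space of $v^\perp{}^\perp = $ rowspace$(v)$... more cleanly: identify $\kk^r \otimes \kk^{n-r}$ with $\Hom(\kk^n, \kk^r)$ by sending $\sum_i a_i \otimes v^\perp_i$-type expressions appropriately; under this identification $\sum_i w_i \otimes v^\perp_i$ is the composite $\kk^n \twoheadrightarrow \kk^{n-r} \xrightarrow{(v^\perp)^{\mathsf T}\text{-section}} \kk^r$, and its vanishing is equivalent to $w$ killing a complement of $\ker(v)^{\perp}$. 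The cleanest route is purely dimension-counting: the tensors $\{u \otimes v^\perp_i\}$ for $u$ ranging over a basis of $\kk^r$ span an $r(n-r)$-dimensional space, the relations $\sum_i w_i \otimes v^\perp_i = 0$ as $w$ ranges over all matrices with rowspace contained in rowspace$(v)$ form an $r \cdot r$-dimensional family (one relation per element of $\Hom$ between the two row spaces, via the fixed identification coming from Gale duality), and these exhaust all linear relations among the $w_i \otimes v^\perp_i$ because the latter span a space of dimension $rn - r^2$. I expect this bookkeeping to be the only place requiring care.

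Finally I should address the role of the connectedness hypothesis, which enters exactly to guarantee the support of the circuit is all of $[n]$. If the matroid of $v$ were disconnected, the tensors would satisfy several independent relations and a "circuit" could have proper support, so one could only recover $w$ on a subset of columns; connectedness forces the unique relation to involve every column, which is what let us rescale to $\sum_i w_i \otimes v^\perp_i = 0$ and then conclude $w \in \GL_r(\kk) v$, hence $w \in X_v^\circ$ after undoing the $T$-rescaling. The main obstacle, then, is purely the linear-algebra identification in the preceding paragraph; everything else is the argument already sketched in the paragraph of the excerpt preceding the proposition, run in reverse.
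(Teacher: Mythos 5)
The paper does not supply its own proof of this proposition: it is cited verbatim from Kapranov \cite[Proposition~2.2.5]{kap}, so there is no internal argument to compare yours against. That said, your argument is correct and is the natural linear-algebra proof. The cleanest packaging of the step you flag as a gap is to identify $\kk^r\otimes\kk^{n-r}$ with the space of $r\times(n-r)$ matrices via $a\otimes b\mapsto ab^{\mathsf T}$; then $\sum_i w_i\otimes v^\perp_i$ becomes the matrix product $w(v^\perp)^{\mathsf T}$, whose vanishing says each row of $w$ is orthogonal to each row of $v^\perp$, i.e.\ $\mathrm{rowspace}(w)\subseteq\mathrm{rowspace}(v^\perp)^\perp=\ker(v)^\perp=\mathrm{rowspace}(v)$. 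This makes your dimension count unnecessary (and your first attempted identification, $\kk^r\otimes\kk^{n-r}\cong\Hom(\kk^n,\kk^r)$, is dimensionally wrong, as you seem to have noticed).

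Two smaller points on your final paragraph. First, the full support of the dependence already follows from the definition of a circuit (it is a \emph{minimal} dependent set, so the unique relation among its members has all coefficients nonzero); you do not need connectedness to get this. Second, the stated hypothesis is that the matroid of $w$, not $v$, is connected, and your proof never actually invokes it. In fact, once one knows $w\in\GL_r(\kk)\,v\,T$, the matroid of $w$ equals that of $v$, and connectedness of $v$ is forced anyway by the circuit hypothesis (a disconnected $v$ makes $v^\perp$ block-diagonal, so any dependence among the $w_i\otimes v^\perp_i$ would split into relations supported on proper blocks, contradicting minimality of a full-support circuit). So the connectedness hypothesis is best read as context carried over from Kapranov's setting rather than as an input your argument needs to consume.
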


For a subset $J$ of $[n]$, let $v_J$ be the submatrix of $v$ on the
columns indexed by $J$, so that the rank $\rk(M|J)$ in the matroid of
$v$ is the dimension of the span of these columns in $\kk^r$. The Gale
dual of $v_J$ is not $(v^\perp)_J$, but it is a projection of this
configuration.  This fact is matroidally manifested by the equality $(M|J)^*
= M^*/J^{\rm c}$ where $J^{\rm c}$ is the complement of $J$ in the ground set of
$M$.

\begin{theorem}\label{thm:dependent tensors}
  For any $v \in \AA^{r \times n}$, a matrix $w$ is in $X_v$ if and only if 
  for every $J =\{j_1,\dots,j_\ell\}\subset [n]$,
  the tensors
  \begin{align}\label{eq:tensors}
  \{ w_{j_i} \otimes (v_J^\perp)_i : i =1 ,\dots, \ell\},    
  \end{align}
  are linearly dependent.
\end{theorem}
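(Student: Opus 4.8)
The plan is to prove both implications of Theorem~\ref{thm:dependent tensors} by reducing to the connected, full-rank case already handled by Proposition~\ref{prop:kap-tensors}, using the structural description of the points of $X_v$ from Proposition~\ref{prop:projecting full rank configurations} and Corollary~\ref{cor:matroids appearing in an orbit closure}. First I would dispense with the easy direction: if $w \in X_v$, then the linear dependence of $\{w_{j_i} \otimes (v_J^\perp)_i\}$ for each $J$ is a closed condition (it is the vanishing of all maximal minors of the matrix whose columns are these tensors), and it holds on the dense orbit $X_v^\circ$ by the computation preceding Proposition~\ref{prop:kap-tensors}, applied to the submatrix $v_J$ together with its Gale dual $v_J^\perp$. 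Since this must be checked for each $J$ separately and $X_v^\circ$ is dense in $X_v$, the condition passes to the closure, giving one inclusion.

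For the converse, suppose $w$ satisfies the dependence condition \eqref{eq:tensors} for every $J$; I want to show $w \in X_v$. The strategy is to first treat the case where $w$ has full rank $r$, and then handle lower-rank $w$ by degeneration. When $w$ has full rank, I would decompose the matroid $M(v)$ into connected components, say on ground sets $E_1, \dots, E_c$ with $[n] = E_1 \sqcup \cdots \sqcup E_c$. Applying the dependence hypothesis with $J = E_k$ for each $k$, and noting that $v_{E_k}$ has connected matroid of full rank (within its own span), one gets from Kapranov's proposition that the restriction $w_{E_k}$ lies in the orbit closure of $v_{E_k}$ — in fact in the open orbit, once one checks the tensors actually form a circuit and not merely a dependent set, which is where the connectedness is used. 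Since $X_v$ for a direct-sum matroid is (up to the $\GL_r$-action) the product of the pieces, reassembling the components shows $w \in X_v^\circ$, hence in $X_v$. For $w$ of rank $\ell < r$, I would use Proposition~\ref{prop:projecting full rank configurations} in reverse: lift $w$ to a full-rank $w'$ by a one-parameter degeneration $\operatorname{diag}(s^{\nu_1}, \dots, s^{\nu_r})$, show that $w'$ still satisfies the dependence condition \eqref{eq:tensors} (since the $\GL_r$-action on the left tensors each dependence relation by an invertible operator, and taking $s \to 0$ limits of dependence relations preserves dependence), conclude $w' \in X_v$ by the full-rank case, and then apply the singular $g \in \End(\kk^r)$ from that proposition to land back at $w$, which therefore lies in $X_v$ as well.

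The main obstacle I expect is the passage from ``the tensors $\{w_{j_i} \otimes (v_J^\perp)_i\}$ are dependent'' to ``$w_J$ lies in $\cl{\GL v_J \, T}$'' when $M|J$ is connected of full rank: Proposition~\ref{prop:kap-tensors} requires the tensors to form a \emph{circuit}, i.e.\ a \emph{minimal} dependent set, not just any dependent set. A dependent collection that is not a circuit corresponds to $w$ with some columns in a smaller-dimensional configuration, and one must argue that such $w$ still lie in the closure — presumably by a further degeneration or induction on the number of columns, peeling off the ``extra'' dependent subset and matching it with a corresponding projection of $v$ along a suitable flag, invoking Corollary~\ref{cor:matroids appearing in an orbit closure}. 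A secondary subtlety is bookkeeping with Gale duals of submatrices: the identity $(M|J)^* = M^*/J^{\mathrm c}$ quoted before the theorem means $v_J^\perp$ is a \emph{projection} of $(v^\perp)_J$, not that object itself, so I must be careful that the tensors in \eqref{eq:tensors} are the intrinsically correct ones for the submatroid $M|J$ and that the reduction to connected components is compatible with this.
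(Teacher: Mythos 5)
Your ``only if'' direction is fine and matches the paper's one-line remark.  Your reduction framework (split off connected pieces, invoke Kapranov for connected full-rank configurations, handle low rank separately) also resembles the paper's two reduction lemmas.  But the proposal has two genuine gaps, and you have half-identified one of them.

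The first and more serious gap is the passage from ``the tensors are dependent'' to an application of Proposition~\ref{prop:kap-tensors}, which requires a \emph{circuit}.  You flag this yourself, and you also notice it is really a question about the connectivity of $M(w)$, not of $M(v)$; but decomposing $M(v)$ into components $E_1,\ldots,E_c$ cannot resolve it, because even if $M(v)|E_k$ is connected, the restriction $M(w)|E_k$ may be badly disconnected, so the tensors $\{w_{j}\otimes (v_{E_k}^\perp)_{\cdot}\}$ may be dependent without forming a circuit.  Your proposed fix (``peel off the extra dependent subset and match it with a projection along a flag'') is exactly the content that needs to be supplied; in the paper this is the chain of rank inequalities \eqref{eq:constructing components}, which orders the connected components $K_1,\ldots,K_\ell$ of $M(w)$ so that all inequalities become equalities and consequently $\rk(w_{K_\ell})=\rk(v_{K_\ell})$.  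Only then does induction on $n$ plus Kapranov's proposition identify the orbit of $w_{K_\ell}$ with that of $v_{K_\ell}$, after which a further degeneration reattaches the remaining columns.  Without some concrete replacement for this rank-counting argument your proof does not close.

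The second gap is the low-rank case.  You propose to use Proposition~\ref{prop:projecting full rank configurations} ``in reverse'' to lift $w$ of rank $\ell<r$ to a full-rank $w'$, check that $w'$ still satisfies the dependence condition, and conclude.  But that proposition \emph{presupposes} $w\in X_v$, which is the conclusion you are trying to reach, so it cannot be invoked; and there is no obvious way to rescale rows of $w$ alone to produce a full-rank $w'$ that still satisfies \eqref{eq:tensors} (the dependence condition involves the columns of $w$ and can easily be destroyed by lifting).  The paper instead handles low-rank $w$ by induction on the number of columns: delete the last column, apply the inductive hypothesis to get a one-parameter family $g(s)v't(s)\to w'$, and then modify the bottom row of $g(s)$ and adjoin a carefully chosen extra $t$-coordinate to reattach the $n$th column.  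That is a substantively different (and inductive-on-$n$) mechanism than the one you propose.

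Finally, a smaller omission: the theorem allows $v$ to have rank less than $r$, and one must argue (as the paper's first lemma does) that this case reduces to the full-rank case; your proposal silently assumes $v$ has full rank.
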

The proof of the theorem can be found in Section~\ref{proof:dependent
  tensors} below.

An immediate consequence of Theorem~\ref{thm:dependent tensors} 
is the next theorem, giving set-theoretic equations for $X_v$. 
Let $x$ denote the matrix of variables
$x_{i,j}$, and let $x_j$ denote the $j$-th column
$(x_{1,j},\dots,x_{r,j})^t$ of $x$. For each subset $J =
\{j_1,\dots,j_\ell\} \subset [n]$ we form the matrix $x_J \odot
v_J^\perp$, whose columns are the tensors $x_{j_i} \otimes
(v_J^\perp)_i \in R^r \otimes \kk^{n-\rk(v_J)}$.
There exists a linear dependence among the columns
of $x_J \odot v_J^\perp$ if and only if all its size $|J|$ minors vanish.
As such:

\begin{theorem}\label{thm:ideal generators}
  Let the size $|J|$ minors of the matrices $x_J \odot v_J^\perp$, $J
  \subset [n]$, generate the ideal $I'_v \subset R$.  Then $\sqrt{I'_v} = I_v$.
\end{theorem}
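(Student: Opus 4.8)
This statement is an immediate consequence of Theorem~\ref{thm:dependent tensors}, and the plan is simply to make that deduction explicit. Fix $J=\{j_1,\dots,j_\ell\}\subseteq[n]$. The matrix $x_J\odot v_J^\perp$ has $r(\ell-\rk(v_J))$ rows and $\ell$ columns, with entries homogeneous linear forms in the $x_{ij}$; specializing $x$ to a matrix $w\in\AA^{r\times n}$ turns it into $w_J\odot v_J^\perp$, whose columns are precisely the tensors $w_{j_i}\otimes(v_J^\perp)_i$ of \eqref{eq:tensors}. By elementary linear algebra these $\ell$ columns are linearly dependent exactly when $w_J\odot v_J^\perp$ has rank $<\ell$. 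If $r(\ell-\rk(v_J))\ge\ell$, this is the simultaneous vanishing of all size-$\ell$ minors; if $r(\ell-\rk(v_J))<\ell$, it holds unconditionally and there are no size-$\ell$ minors, so $J$ imposes nothing on $I'_v$. (The extreme instance is $J$ independent in $M(v)$, where $v_J^\perp$ has no rows and the tensors lie in the zero space, trivially dependent.) Hence $V(I'_v)$ is exactly the locus of $w$ for which, for every $J\subseteq[n]$, the tensors $\{w_{j_i}\otimes(v_J^\perp)_i:i=1,\dots,\ell\}$ are linearly dependent.

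Theorem~\ref{thm:dependent tensors} identifies this locus with $X_v$; its proof (Section~\ref{proof:dependent tensors}) goes through with $w$ having entries in any extension of $\kk$, in particular in $\overline{\kk}$, so $V(I'_v)=X_v=V(I_v)$ as subsets of $\AA^{r\times n}(\overline{\kk})$. Since $X_v$ is a variety, $I_v$ is radical, and therefore by the Nullstellensatz $\sqrt{I'_v}=I\bigl(V(I'_v)\bigr)=I\bigl(V(I_v)\bigr)=I_v$. (One inclusion, $I'_v\subseteq I_v$, also follows directly: repeating the computation before Proposition~\ref{prop:kap-tensors} with $v_J$ and $v_J^\perp$ in place of $v$ and $v^\perp$ shows that each of the defining minors vanishes on $X_v^\circ$, hence on $X_v$.)

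Granting Theorem~\ref{thm:dependent tensors}, there is no real obstacle in the present argument; the only point needing attention is the size bookkeeping for the matrices $x_J\odot v_J^\perp$, ensuring that ``all size-$|J|$ minors vanish at $w$'' really coincides with ``the columns of $w_J\odot v_J^\perp$ are linearly dependent'' in the tall, square and wide regimes alike. All the substance of the theorem lies in the proof of Theorem~\ref{thm:dependent tensors}.
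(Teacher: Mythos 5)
Your proof is correct and follows essentially the same route the paper takes: the paper presents Theorem~\ref{thm:ideal generators} as an immediate consequence of Theorem~\ref{thm:dependent tensors}, with the single observation that linear dependence of the columns of $x_J\odot v_J^\perp$ is equivalent to the vanishing of all size-$|J|$ minors, and your argument spells out that same deduction. The only difference is that you add two useful pieces of bookkeeping that the paper leaves implicit, namely the case analysis depending on whether $r(\ell-\rk(v_J))$ is at least $\ell$ (cf.\ Remark~\ref{rmk:ideal generators}), and the point that the Nullstellensatz step requires the set-theoretic identification $V(I'_v)=X_v$ over $\overline{\kk}$, which you justify by noting that the proof of Theorem~\ref{thm:dependent tensors} works over any extension of~$\kk$.
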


\begin{remark}\label{rmk:ideal generators}
  There are two special cases that occur when applying this
  result. The first occurs when a subconfiguration $v_J$ consists of
  linearly independent vectors. In this case $v_J^\perp$ is a
  configuration of $n$ null vectors. We interpret $x_J \otimes
  v_J^\perp$ to be the zero matrix in this case. The second special
  case is when $v_J$ has $U_{|J|-1,|J|}$ as its matroid. In this case
  the dimensions of the matrix $x_J \odot v_J^\perp$ are
  $(|J|-1)$-by-$|J|$, and hence all its size $|J|$ minors vanish.
\end{remark}

\subsection{On the primality of $I'_v$}

We first make a conjecture.
\begin{conjecture}
  The ideal $I'_v$ is equal to $I_v$.
\end{conjecture}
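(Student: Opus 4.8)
The plan is to prove the equivalent scheme-theoretic statement that $R/I'_v$ is reduced: by Theorem~\ref{thm:ideal generators} we already know $\sqrt{I'_v}=I_v$ with $I_v$ prime, so reducedness of $R/I'_v$ is exactly the asserted equality $I'_v=I_v$. Since $X_v=V(I'_v)$ is irreducible, $I_v$ is the unique minimal prime of $I'_v$, and Serre's criterion reduces the task to two points: (i) $R/I'_v$ is generically reduced, i.e.\ $(R/I'_v)_{I_v}$ is a field ($(\mathrm{R}_0)$); and (ii) $R/I'_v$ has no embedded associated primes ($(\mathrm{S}_1)$).

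For (i) one argues near the generic point of $X_v$, which lies in the open orbit $X_v^\circ$. Passing to connected components one may assume $M(v)$ is connected of full rank, so that Proposition~\ref{prop:codim} gives $\dim X_v=r^2+n-1$ and Kapranov's Proposition~\ref{prop:kap-tensors} identifies $X_v^\circ$ with the locus where the tensors $w_i\otimes v_i^\perp$ form a circuit. It then suffices to localize at $I_v$, retain only the generators of $I'_v$ indexed by $J=[n]$ together with the corank-one subsets $J$, and verify by a Jacobian computation at a general point of $X_v^\circ$ that these already cut out a smooth --- hence reduced --- local scheme of dimension $r^2+n-1$; this is the step I expect to be routine.

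The crux is (ii): excluding embedded primes of $R/I'_v$. The cleanest approach would settle the conjecture outright, by exhibiting a term order for which the prescribed size-$|J|$ minors of the matrices $x_J\odot v_J^\perp$ form a Gr\"obner basis of $I'_v$ with \emph{squarefree} initial ideal, since a radical initial ideal forces $I'_v$ itself to be radical. One expects $\In(I'_v)$ to be a Stanley--Reisner ideal governed by the circuits and non-bases of $M(v)$, in line with the combinatorial formulas for $\K(X_v)$ and with the deletion/contraction behaviour of these invariants studied elsewhere in the paper; a natural attack is thus induction on the size of the ground set, realizing single-element deletion and contraction of the matroid at the level of $I'_v$. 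The real obstacle is that the matrices $x_J\odot v_J^\perp$ are far from generic --- their entries are variables scaled by the fixed scalars of $v_J^\perp$ --- so neither classical determinantal-ideal theory nor an off-the-shelf straightening law applies. The situations handled in Propositions~\ref{prop:I'_v is known prime (n-2)} and \ref{prop:I'_v is known prime 2}, where $I'_v$ collapses to a recognizable prime determinantal ideal, are precisely the regime in which this loss of genericity is mild enough to control.

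Finally, an alternative to the primary-decomposition bookkeeping in (ii): since $I'_v\subseteq I_v$, it is enough to prove the single numerical identity $\K(R/I'_v;u,t)=\K(R/I_v;u,t)$, because then the graded surjection $R/I'_v\twoheadrightarrow R/I_v$ is a degreewise isomorphism. The right-hand side is the matroid invariant of Theorem~\ref{thm:matroid function}, and the left-hand side would be read off from any flat degeneration of $I'_v$ --- the Gr\"obner degeneration above being the obvious candidate. This route packages the conclusion efficiently, but it still rests on the same control of the non-generic mixed minors $x_J\odot v_J^\perp$ that makes (ii) the hard part.
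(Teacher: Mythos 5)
This statement is an \emph{open conjecture} in the paper: the authors do not prove it, and establish it only in two special cases, Propositions~\ref{prop:I'_v is known prime (n-2)} and~\ref{prop:I'_v is known prime 2}, both by recognizing $I'_v$ (or a subideal $I''_v$) as a generic determinantal ideal and invoking Eagon--Hochster. So there is no proof in the paper to compare against, and your proposal must stand or fall on its own.

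As you yourself acknowledge, what you give is a plan of attack rather than a proof. Framing the problem via Serre's criterion on $R/I'_v$ is sensible, and the claim that $(\mathrm{R}_0)$ reduces to a Jacobian check at a point of the open orbit is plausible (the orbit is a smooth homogeneous space), though you still need to show that the chosen minors of $x_J\odot v_J^\perp$ --- which is emphatically \emph{not} a generic matrix of linear forms --- cut out the expected codimension at that point, and this is not as routine as you suggest. The genuine gap, which you correctly identify, is $(\mathrm{S}_1)$: you propose no concrete term order, no Gr\"obner basis computation, and no argument that the initial ideal would be squarefree, and nothing in the paper's combinatorics of $\K(X_v)$ currently supplies such a Stanley--Reisner model. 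Those are exactly the hard claims. The alternative route through $\K(R/I'_v)=\K(R/I_v)$ is also circular as presented, since it requires a method to compute $\K(R/I'_v)$ independently of knowing $I'_v=I_v$, and the only method you offer is the same undemonstrated Gr\"obner degeneration. In short, the proposal re-expresses the conjecture as a hypothesis (a squarefree Gr\"obner degeneration of $I'_v$) that is essentially equivalent to it, rather than closing the gap.
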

We can prove this conjecture in two special cases.

\begin{proposition}\label{prop:I'_v is known prime (n-2)}
  Suppose $v \in \AA^{(n-2) \times n}$ and that $v$ has a uniform
  matroid of rank $n-2$. Then $I'_v = I_v$ and $R/I'_v$ is a
  Cohen--Macaulay ring.
\end{proposition}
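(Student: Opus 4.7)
The plan is to first use the uniform matroid hypothesis to reduce $I'_v$ to the ideal of maximal minors of a single $(2n-4)\times n$ matrix of linear forms, then apply the Eagon--Northcott theorem for Cohen--Macaulayness, and finally promote the inclusion from Theorem~\ref{thm:ideal generators} to the equality $I'_v=I_v$ by verifying smoothness of $V(I'_v)$ at $v\in X_v^\circ$. Under the hypothesis, every $J\subsetneq[n]$ contributes vacuously to $I'_v$: for $|J|\le n-2$ the columns $v_J$ are independent, so $v_J^\perp=0$; and for $|J|=n-1$ the $n-1$ tensors lie in $\kk^{n-2}\otimes\kk^1\cong\kk^{n-2}$ and are automatically dependent. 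Thus $I'_v=I_n(M)$, where $M=x\odot v^\perp$ is the $(2n-4)\times n$ matrix with $j$-th column $v^\perp_j\otimes x_j$. Proposition~\ref{prop:codim} gives $\codim X_v = n-3$, which matches the expected codimension $(2n-4)-n+1$ of the ideal of maximal minors of $M$; so the Eagon--Northcott complex resolves $R/I'_v$ and delivers the Cohen--Macaulay half of the proposition.

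Since $\sqrt{I'_v}=I_v$ is prime and $R/I'_v$ is Cohen--Macaulay, it suffices to show $R/I'_v$ is generically reduced, which I will check at the point $v\in X_v^\circ$. First I will show $M(v)=v\odot v^\perp$ has rank exactly $n-1$: its kernel contains $\mathbf{1}$ by $v(v^\perp)^T=0$, so I must show the kernel is no bigger. Dualizing the map $e_j\mapsto v^\perp_j\otimes v_j$, its image is the span of Hadamard products $u*w$ for $u\in U:=\operatorname{rowspan}(v)$ and $w\in W:=\operatorname{rowspan}(v^\perp)$, and the orthogonality $U\perp W$ places this image inside $\mathbf{1}^\perp$. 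The annihilator of that image is the Hadamard stabilizer $N(U):=\{c\in\kk^n:cU\subseteq U\}$, and the desired equality $\operatorname{rank}M(v)=n-1$ is equivalent to $N(U)=\kk\cdot\mathbf{1}$. Any $c\in N(U)$ defines an endomorphism $L$ of $\kk^{n-2}$ having each column $v_{\cdot,j}$ as an eigenvector with eigenvalue $c_j$; uniformity of the matroid expresses every column as a combination of any basic $n-2$ columns with all coefficients nonzero, and comparing the two eigenvalue expressions on such an expansion forces the $c_j$ to agree, yielding the claim.

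Since $M(v)$ is then a smooth point of the determinantal variety $D=\{\rk\le n-1\}\subset\AA^{(2n-4)\times n}$ of codimension $n-3$, and $V(I'_v)=\pi^{-1}(D)$ for the linear map $\pi\colon x\mapsto x\odot v^\perp$, the standard formula $T_{M(v)}D=\{N\colon N\mathbf{1}\in\operatorname{im}M(v)\}$ gives
\[
T_vV(I'_v) \;=\; \{\xi\in\AA^{(n-2)\times n} \colon v^\perp\xi^T\in\operatorname{im}M(v)\}.
\]
The linear map $\xi\mapsto v^\perp\xi^T$ from $\kk^{(n-2)\times n}$ to $\kk^{2\times(n-2)}$ is surjective with kernel of dimension $(n-2)^2$, and $\operatorname{im}M(v)$ has dimension $n-1$, yielding $\dim T_vV(I'_v)=(n-2)^2+(n-1)=\dim X_v$. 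Thus $V(I'_v)$ is smooth at $v$, supplying the required generic reducedness and hence $I'_v=I_v$. I expect the rank-$(n-1)$ claim to be the hard part: $M$ fails to be $1$-generic in Eisenbud's sense (any row-vector orthogonal to some $v^\perp_{j_0}$ witnesses $u^TMe_{j_0}=0$), so the usual $1$-generic criterion that would give primeness and Cohen--Macaulayness in one stroke is unavailable, and the uniform matroid hypothesis must enter precisely through the Hadamard-stabilizer argument above.
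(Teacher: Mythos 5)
Your proof of Cohen--Macaulayness is essentially the paper's: both reduce $I'_v$ to the maximal minors of the single $(2n-4)\times n$ matrix $x\odot v^\perp$ (you spell out why the smaller $J$ contribute vacuously, which the paper leaves implicit via Remark~\ref{rmk:ideal generators}) and both invoke the Eagon--Northcott/Eagon--Hochster machinery once the codimension of $I'_v$ is verified to equal the expected $n-3$. For the equality $I'_v=I_v$, however, the two routes diverge. The paper simply cites \cite[Corollary~4]{eagonHochster} as yielding primeness directly. You instead upgrade $\sqrt{I'_v}=I_v$ to equality by checking generic reducedness: since $R/I'_v$ is Cohen--Macaulay, hence unmixed, with irreducible support $X_v$, it suffices to exhibit one regular point of $V(I'_v)$, which you do at $v$ itself by computing the Zariski tangent space. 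The two new ingredients you supply are (i) the rank computation $\operatorname{rank}(v\odot v^\perp)=n-1$, via a Hadamard-stabilizer argument for $U=\operatorname{rowspan}(v)$, in which uniformity enters through the fact that every column of $v$ is a combination of any complementary basis with all coefficients nonzero; and (ii) the pullback of the tangent space of the rank-$(n-1)$ determinantal variety along the linear injection $\xi\mapsto\xi\odot v^\perp$, whose kernel dimension $(n-2)^2$ plus the image dimension $n-1$ of $v\odot v^\perp$ reproduces $\dim X_v$. Your argument is more self-contained, shows precisely where the matroid hypothesis does work, and sidesteps the question of whether the cited Eagon--Hochster corollary applies to this specific non-generic matrix of linear forms, which, as you correctly observe, fails Eisenbud's $1$-genericity criterion. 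One small correction: the endomorphism $L$ you define satisfies $L^{\mathsf T}v_j=c_jv_j$ rather than $Lv_j=c_jv_j$ (the eigenvector relation lands on the transpose), but this has no effect on the conclusion that $c$ must be constant.
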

\begin{proof}
  The hypothesis on $v$ ensures that it has a full dimensional
  orbit. The codimension of the orbit closure is thus $n-3$.  It
  follows that the codimension of $I'_v$ is $n-3$.

  If $v \in \AA^{(n-2) \times n}$ then Remark~\ref{rmk:ideal
    generators} implies that $I'_v$ is generated by the size $(n-2)$
  minors of $x \odot v^\perp$ --- it is a determinantal ideal. Since
  $x \odot v^\perp$ has dimension $2(n-2)$-by-$n$, $I'_v$ has the
  expected codimension. We apply \cite[Corollary~4]{eagonHochster} to
  conclude that $I'_v$ is prime and hence $I'_v = I_v$. The cited
  result also implies that $R/I'_v$ is a Cohen--Macaulay ring.
\end{proof}

The second case of primality is Gale dual to the first.

\begin{proposition}\label{prop:I'_v is known prime 2}
  Suppose that $v \in \AA^{2 \times n}$ and that $v$ has a uniform
  matroid of rank $2$. Then $I'_v = I_v$ and $R/I'_v$ is a
  Cohen--Macaulay ring.
\end{proposition}

The proof of this result follows by constructing a third ideal from
$v$ with the desired properties. This ideal is contained in $I'_v$ and
we will show that the former ideal cuts out $X_v$. Specifically, let $I''_v$ denote the ideal generated by the
size $4$ minors of the $4$-by-$n$ matrix
\[
x \odot v = \begin{bmatrix}
    \begin{bmatrix}
      x_{11}\\x_{21}
    \end{bmatrix} \otimes v_1 &
    \begin{bmatrix}
      x_{12}\\x_{22}
    \end{bmatrix} \otimes v_2
    &\cdots& 
    \begin{bmatrix}
      x_{1n}\\x_{2n}
    \end{bmatrix} \otimes v_n
  \end{bmatrix}.
\]
Given two integers $a <b \in [n]$, we let $p_{ab}(v)$ denote the
determinant of the $2$-by-$2$ submatrix of $v$ with columns $a$ and
$b$. Similarly define $p_{ab}(x)$. It is an immediate calculation that
the minors of $x \odot v$ are all of the form
\[
p_{ab}(v) p_{cd}(v) p_{ac}(x) p_{bd}(x) - p_{ac}(v) p_{bd}(v)
p_{ab}(x)p_{cd}(x).
\]
This polynomial is obtained from the equality of the cross ratio
\[
\frac{p_{ab}(v) p_{cd}(v)}{p_{ac}(v) p_{bd}(v)}
=
\frac{p_{ab}(gvt) p_{cd}(gvt)}{p_{ac}(gvt) p_{bd}(gvt)},
\]
which holds on the orbit $X_v^\circ$, which is open in its closure.
\begin{proposition}
  The vanishing locus of $I''_v$ is $X_v$.
\end{proposition}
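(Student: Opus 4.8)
The goal is to show $V(I''_v) = X_v$ for $v \in \AA^{2\times n}$ with uniform matroid $U_{2,n}$. Since $X_v$ is irreducible and, by Theorem~\ref{thm:ideal generators}, $X_v = V(I'_v)$, and since $I''_v \subseteq I'_v$ (the size-$4$ minors of $x\odot v$ specialize, via the Gale-duality relations, to combinations of the tensor minors from Theorem~\ref{thm:dependent tensors} — indeed the preceding discussion exhibits the minors of $x\odot v$ explicitly as the cross-ratio relations, which obviously vanish on $X_v^\circ$ and hence on $X_v$), we immediately get $X_v \subseteq V(I''_v)$. The substantive direction is the reverse inclusion $V(I''_v) \subseteq X_v$.

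The strategy is to analyze $V(I''_v)$ directly using the factored form of the minors. First I would stratify by rank of the point $w$. On the full-rank locus, a matrix $w$ lies in $V(I''_v)$ iff all cross ratios $p_{ab}(w)p_{cd}(w)/p_{ac}(w)p_{bd}(w)$ agree with those of $v$; by classical projective geometry of $n$ points on $\PP^1$ (using that $v$ has uniform matroid, so all relevant $p$'s are nonzero), equality of all cross ratios forces $w$ to be projectively equivalent to $v$, i.e. $w \in X_v^\circ$. For lower-rank points I would use Proposition~\ref{prop:projecting full rank configurations}: any $w \in V(I''_v)$ of rank $<2$ should be exhibited as $g w'$ with $w'$ of full rank lying in $V(I''_v)$ — hence in $X_v^\circ$ by the previous step — and $g \in \End(\kk^2)$ singular; then $w = gw' \in X_v$. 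Concretely, a rank-$\le 1$ matrix $w$ can be written (after $\GL_2$) with second row zero; I would produce $w'$ by perturbing the zero row using the constraint that the size-$4$ minors vanish, checking that the limiting/generic lift stays in $V(I''_v)$ and has full rank, which is possible precisely because $v$'s uniform matroid leaves enough freedom. Alternatively, one can argue set-theoretically: a rank-$1$ $w$ means all $p_{ab}(x)$ vanish at $w$, so every minor vanishes automatically, so $V(I''_v)$ contains all rank-$\le 1$ matrices; but all rank-$\le 1$ matrices of the appropriate weight also lie in $X_v$ (they arise as projections of $v$ along a flag collapsing everything, then multiplying by a rank-$1$ endomorphism), so these points cause no problem.

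Assembling: $V(I''_v)$ is the union of its full-rank part, which is exactly $X_v^\circ$, and its lower-rank part, which is contained in $X_v$; taking closures and using irreducibility of $X_v$ gives $V(I''_v) \subseteq \overline{X_v^\circ} = X_v$, and combined with the easy inclusion we conclude $V(I''_v) = X_v$.

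The main obstacle is the rank-drop analysis: making precise that every lower-rank point of $V(I''_v)$ genuinely lies in $X_v$, rather than in some spurious component of the vanishing locus of the (possibly non-radical, non-prime) ideal $I''_v$. The cross-ratio argument on the open locus is classical and clean; the danger is entirely at the boundary, and handling it requires either the explicit lift via Proposition~\ref{prop:projecting full rank configurations} or a careful direct check that the only rank-$\le 1$ matrices satisfying the weight constraints implicit in the grading are the expected ones. I expect the cleanest route is the observation that on rank-$\le 1$ matrices all the $p_{ab}(x)$ vanish, trivializing all minors, combined with the fact from Section~\ref{sec:pointsOfOrbitClosures} that such matrices are exactly the images of $v$ under singular endomorphisms composed with flag projections, hence lie in $X_v$.
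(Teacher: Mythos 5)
Your proposal has a genuine gap in the full-rank case. You claim that for a full-rank $w\in V(I''_v)$, vanishing of the minors of $x\odot v$ at $w$ is equivalent to equality of all cross ratios of $w$ with those of $v$, and that this forces $w\in X_v^\circ$. But the cross ratio $p_{ab}(w)p_{cd}(w)/p_{ac}(w)p_{bd}(w)$ is only defined when the denominator is nonzero; the uniformity of $M(v)$ guarantees the $p_{ab}(v)$ are nonzero, not the $p_{ab}(w)$. A full-rank $w$ can have parallel columns (so some $p_{ab}(w)=0$ while $\rk(w)=2$), and such $w$ genuinely lie in $V(I''_v)$: for instance, any $w$ with $n-1$ parallel columns and one independent column is in $X_v$ (hence in $V(I''_v)$), has rank $2$, but is not in $X_v^\circ$. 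Your stratification covers only rank $\leq 1$ (where all $p_{ab}(w)$ vanish, so every minor is trivially zero) and rank $2$ with uniform matroid (where the cross-ratio argument works). The intermediate case --- rank $2$ but non-uniform matroid --- is missing, and you cannot simply absorb it into a closure argument without showing a priori that $V(I''_v)$ has no irreducible component contained in the parallel-columns locus.

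The paper closes exactly this gap and does so by a different, inductive route. It inducts on $n$: if $w$ has a zero column, reduce to $n-1$; if $w$ has no zero column but has a pair of parallel columns, the key observation is that the vanishing of the size-$4$ minors then forces \emph{at least $n-1$} of the columns of $w$ to be mutually parallel (because for $p_{ab}(w)=0$ and $v$ uniform, each minor forces $p_{ac}(w)p_{bd}(w)=0$), and such a $w$ is shown to be in $X_v$ by a direct limit computation; only in the remaining case, where $w$ has no parallel columns, does the cross-ratio reasoning enter, and even there it is combined with the inductive hypothesis applied to the first $n-1$ columns. To repair your non-inductive approach you would need an analogue of the ``$n-1$ parallel columns'' lemma to handle the rank-$2$ non-uniform case; as written, your argument does not establish $V(I''_v)\subseteq X_v$.
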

\begin{proof}
  The result follows by induction on $n$. If $n=4$ then $\codim
  X_v = 1$, and so $I_v$ must be principal. Since $I''_v$
  is principal and $I_v$ cannot be generated by a linear form or a
  constant, we must have equality.
  
  Suppose that $n>4$ and let $w = (w_1,\dots,w_n)$ be a $2$-by-$n$
  matrix in the vanishing locus of $I''_v$. If $w$ has a zero column
  then $w \in X_v$ by induction on $n$. Hence, we assume
  that no column of $w$ is zero. Assume that $w$ has a pair of
  parallel columns. The fact that $w$ vanishes at all the generators
  of $I''_v$ implies that at least $n-1$ of the columns of $w$ are
  parallel. A simple calculation proves that $w \in X_v$.

  Finally, assume that $w$ has no parallel columns. By induction we
  can bring the first $n-1$ columns of $w$ to those of $v$ by a
  projective transformation. Since the first, second, third and last
  column of $w$ have a presribed cross ratio, we see that $w_n$ must
  be a non-zero scalar multiple of $v_n$.
\end{proof}

\begin{proof}[Proof of Proposition~\ref{prop:I'_v is known prime 2}]
  By \cite[Corollary~4]{eagonHochster} we know that $I''_v$ is
  prime. A short calculation gives $I''_v \subset I'_v$ and we
  conclude that $I''_v = I'_v = I_v$. The cited result also yields the
  fact that $R/I''_v$ is a Cohen--Macaulay ring.
\end{proof}

% Being a determinantal ideal %(though not of a generic matrix), 
% the ideal $I_v$ for a matrix $v\in\AA^{2\times n}$ with uniform matroid
% is quite tractable.  For instance, we can extract its $K$-polynomial easily.

\subsection{Proof of Theorem~{\ref{thm:dependent tensors}}}\label{proof:dependent tensors}
The ``only if'' direction of the theorem is true by the discussion
proceeding Proposition~\ref{prop:kap-tensors}. Our proof of the ``if'' direction
is by induction.
\begin{lemma}
  Suppose that for every $r' < r$ Theorem~\ref{thm:dependent tensors}
  is true for $\AA^{r' \times n}$. Then to prove the theorem for
  $\AA^{r \times n}$, we may assume that $v \in (\AA^{r \times
    n})^{\rm fr}$.
\end{lemma}
\begin{proof} 
  Suppose that $v \in \AA^{r \times n}$ has rank $r'< r$ and that $u
  \in \AA^{r \times n}$ has rank larger than $r'$.  Replacing $v$ with a
  matrix in $\GL_r v$ we may assume that the last $r-r'$ rows of $v$
  are zero. Let $J=\{j_1,\dots,j_{r'+1}\} \subset [n]$ denote a set of
  indices of size $r'+1$ such that $u_J$ has rank $r'+1$. Not every
  column of $v_J$ can be a coloop of the matroid $M(v)|J$, so by
  throwing away elements of $J$, assume that $v_J$ is coloop
  free. Consider the tensors
  \[
  u_{j_1} \otimes (v_J^\perp)_1, \quad u_{j_2} \otimes (v_J^\perp)_2,
  \dots, \quad u_{j_{r'+1}} \otimes (v_J^\perp)_{r'+1},
  \]
  These are linearly independent, since the columns of $u_J$ are
  linearly independent, except in the case that one of the columns of
  $v_J^\perp$ is zero. However, every column of $v_J^\perp$ is
  non-zero since $M(v)|J$ is coloop free.

  Suppose that for every $J \subset [n]$, the tensors in
  \eqref{eq:tensors}, with $u$ taking the place of $w$,
  are linearly dependent. Then $u$ has rank at most
  $r'$. Replacing $u$ with a $\GL_r$-translate, we may assume that the
  latter $r-r'$ rows of $u$ \textit{and} $v$ are zero. Ignoring the
  latter $r-r'$ rows of $u$ and $v$, we can appeal to the truth of
  Theorem~\ref{thm:dependent tensors} for $\AA^{r' \times n}$, thus
  proving the lemma.
\end{proof}

\begin{lemma}
  Suppose that for every $n' < n$ Theorem~\ref{thm:dependent tensors}
  is true for matrices in $\AA^{r \times n'}$. Then, to prove the
  theorem for $v \in \AA^{r \times n}$, we may assume that $v$ has a
  connected matroid.
\end{lemma}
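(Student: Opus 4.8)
The plan is to reduce a general $v \in \AA^{r \times n}$ to the case where $M(v)$ is connected by splitting off one connected component at a time, then recognizing that both the hypothesis and the conclusion of Theorem~\ref{thm:dependent tensors} respect direct sums. Suppose $M(v) = M_1 \oplus M_2$, where $M_1$ is the restriction of $M(v)$ to a set $A \subsetneq [n]$ and $M_2$ its restriction to $A^{\rm c}$; after a $\GL_r(\kk)$-translate we may take $v$ to be block-diagonal, $v = \begin{bmatrix} v^A & 0 \\ 0 & v^{A^{\rm c}} \end{bmatrix}$, with $v^A$ of full rank $r_1 = \rk(M_1)$ on the first $r_1$ rows and $v^{A^{\rm c}}$ of full rank $r_2 = r - r_1$ on the last $r_2$ rows. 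Both $v^A$ and $v^{A^{\rm c}}$ then have fewer columns than $n$, so Theorem~\ref{thm:dependent tensors} is available for each by the inductive hypothesis, and I would aim to show that $w \in X_v$ if and only if $w$ is (projectively, in the closure sense) block-diagonal with blocks in $X_{v^A}$ and $X_{v^{A^{\rm c}}}$, and that the tensor-dependence conditions of \eqref{eq:tensors} for $v$ are equivalent to the corresponding conditions for $v^A$ together with those for $v^{A^{\rm c}}$.

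First I would verify the geometric statement $X_v = X_{v^A} \times X_{v^{A^{\rm c}}}$ (under the evident identification of $\AA^{r \times n}$ with a subspace of $\AA^{r_1 \times |A|} \times \AA^{r_2 \times |A^{\rm c}|}$ adjusted by the "off-diagonal blocks must vanish" equations). On the orbit $X_v^\circ$ this is immediate since $\GL_{r_1} \times \GL_{r_2} \times T$ already acts transitively within the block-diagonal locus and $\GL_r$ contributes nothing new in the closure that isn't captured by the smaller orbit closures — here I would lean on Proposition~\ref{prop:projecting full rank configurations} and Corollary~\ref{cor:matroids appearing in an orbit closure} to control the low-rank points: every matroid appearing in $X_v$ is a quotient of $\bigoplus (M(v)|S_i)/S_{i-1}$, and when $M(v) = M_1 \oplus M_2$ the relevant flags and quotients decompose compatibly across the two components. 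Second, I would check the combinatorial compatibility of the tensor conditions: for $J \subseteq [n]$ write $J = J_1 \sqcup J_2$ with $J_i$ in the ground set of $M_i$; the Gale dual $v_J^\perp$ decomposes (up to the $\GL$-ambiguity in Gale duality) as a direct sum $(v^A_{J_1})^\perp \oplus (v^{A^{\rm c}}_{J_2})^\perp$ precisely because Gale duality is additive over direct sums, and correspondingly the matrix $x_J \odot v_J^\perp$ is block-diagonal with blocks $x_{J_1} \odot (v^A_{J_1})^\perp$ and $x_{J_2} \odot (v^{A^{\rm c}}_{J_2})^\perp$. A block-diagonal matrix has a linear dependence among its columns if and only if one of its diagonal blocks does; hence the condition \eqref{eq:tensors} for $(v, J)$ holds for \emph{all} $J$ if and only if it holds for all $J_1$ in the first component and all $J_2$ in the second. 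Combining the two halves: the $w$ satisfying all tensor conditions for $v$ are exactly the block-diagonal $w = w^A \oplus w^{A^{\rm c}}$ with $w^A$ satisfying all conditions for $v^A$ and $w^{A^{\rm c}}$ all conditions for $v^{A^{\rm c}}$; by induction these lie in $X_{v^A}$ and $X_{v^{A^{\rm c}}}$ respectively, hence $w \in X_{v^A} \times X_{v^{A^{\rm c}}} = X_v$. Iterating over connected components (or, formally, inducting on the number of components, the base case being a connected matroid) completes the reduction.

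The main obstacle I anticipate is the first step — pinning down that the off-diagonal entries of $w$ are genuinely forced to vanish and that no "mixing" between the two blocks survives in the closure. The subtlety is that the tensor conditions \eqref{eq:tensors} are stated for all subsets $J$, and one must argue that a subset $J$ straddling both components (with $J_1, J_2$ both nonempty) yields a block-diagonal $x_J \odot v_J^\perp$ only after correctly accounting for the $\GL_{n-\rk}$-ambiguity in the choice of Gale dual — a poor choice of $v_J^\perp$ need not be block-diagonal even when $v_J$ is. I would handle this by fixing, once and for all, a block-compatible representative for each $v_J^\perp$ (legitimate since the conclusion of Theorem~\ref{thm:dependent tensors} is manifestly independent of this choice, the ideal $I'_v$ of Theorem~\ref{thm:ideal generators} being defined up to this same ambiguity), after which block-diagonality is automatic and the "off-diagonal blocks vanish" equations fall out of the $J = \{a, b\}$ cases with $a \in A$, $b \in A^{\rm c}$ in the coloop-free reductions. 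Everything else is bookkeeping about direct sums of matroids and of vector configurations, which the paper's conventions (Section~\ref{sec:matroids}) make routine.
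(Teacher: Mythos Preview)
There is a genuine gap. Your central claim---that a matrix $w$ satisfying the tensor conditions \eqref{eq:tensors} for a block-diagonal $v$ must itself be block-diagonal---is false. For any $g\in\GL_r(\kk)$ not in $\GL_{r_1}(\kk)\times\GL_{r_2}(\kk)$, the matrix $gv$ lies in $X_v^\circ$ and satisfies every tensor condition, yet has nonzero off-diagonal blocks. Correspondingly, $X_v$ is not contained in the block-diagonal locus $\AA^{r_1\times|A|}\times\AA^{r_2\times|A^{\rm c}|}$ (which has dimension $r_1|A|+r_2|A^{\rm c}|<rn$), so your proposed identification cannot hold. Your attempted remedy via $J=\{a,b\}$ with $a\in A$, $b\in A^{\rm c}$ does not work either: since $v_a$ and $v_b$ span complementary coordinate subspaces they are linearly independent, so $v_J^\perp=0$ and the condition on $w$ is vacuous (both elements are coloops of $M(v)|J$, so any ``coloop-free reduction'' removes them).

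The paper avoids this by never asserting block-diagonality of $w$. It writes $w=[w'\ w'']$ as a \emph{column} decomposition with $w',w''$ each retaining all $r$ rows, and notes that the tensor conditions for $v$ with $J\subseteq A$ (respectively $J\subseteq A^{\rm c}$) are exactly the conditions for $v'$ (respectively $v''$); by the inductive hypothesis on $n$ this gives $w'\in X_{v'}\subset\AA^{r\times|A|}$ and $w''\in X_{v''}\subset\AA^{r\times|A^{\rm c}|}$. Proposition~\ref{prop:projecting full rank configurations} then produces rank-$r_1$ and rank-$r_2$ lifts $\tilde w'\in X_{v'}$, $\tilde w''\in X_{v''}$ together with $g,h\in\End(\kk^r)$ with $g\tilde w'=w'$, $h\tilde w''=w''$; the lifts \emph{can} be chosen to live in complementary row-blocks, so $[\tilde w'\ \tilde w'']\in X_v$. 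Finally a single matrix $A\in\End(\kk^r)$, assembled from the first $r_1$ columns of $g$ and the last $r_2$ columns of $h$, sends $[\tilde w'\ \tilde w'']$ to $w$. The idea your proposal is missing is precisely this last step: the $\GL_r$-mixing between the two blocks is absorbed by one endomorphism applied to a block-diagonal element of $X_v$, rather than being ruled out by the equations.
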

\begin{proof}
  Suppose that $v$ has a disconnected matroid and, after permuting
  columns of $v$, write $v = [v'\ v'']$ where $M(v) = M(v') \oplus
  M(v'')$. After applying an element of $\GL_r$, we may assume that
  the first $r'$ rows of $v''$ are zero, and the latter $r'' =
  r-r'$ rows of $v'$ are zero, so that $v$ is a direct sum of
  matrices. From this we see that $v^\perp$ is also a direct sum of matrices.

  Pick $w \in \AA^{r \times n}$ satisfying conditions
  \eqref{eq:tensors} and write $w = [w'\ w'']$, where $w'$ and $w''$
  have the same numbers of columns as $v'$ and $v''$. It follows that
  $w' \in X_{v'}$ and $w'' \in X_{v''}$ by our induction hypothesis.

  By Proposition~\ref{prop:projecting full rank configurations} there
  are configurations $\tilde w' \in X_{v'}$, $\tilde w'' \in X_{v''}$,
  of rank $r'$ and $r''$ respectively, and matrices $g,h
  \in \End(\kk^r)$ such that $g \tilde w' = w'$ and $h \tilde w'' =
  w''$. We may assume that $\tilde w''$ has its latter $r'$ rows equal
  to zero, and that the first $r''$ rows of $\tilde w'$ are
  zero. Thus, $[\tilde w'\ \tilde w''] \in X_v$. Taking the first $r'$
  columns of $g$ and the latter $r''$ columns of $h$ and forming a new
  matrix $A$ from these, we have that $A[\tilde w'\ \tilde w''] = w$,
  which is thus in $X_v$.
 \end{proof}
We are now in a good place to prove the theorem.
\begin{proof}[Proof of Theorem~\ref{thm:dependent tensors}]
  We have reduced to the case that $v$ has full rank and a connected
  matroid. This implies that the orbit $X_v^\circ$ has dimension $r^2
  + n-1$.  If $n=r+1$ then $r^2+n-1 = r(r+1)$ and so $X_v = \AA^{r
    \times (r+1)}$. The tensors in \eqref{eq:tensors} are dependent by
  a dimension count, so the theorem is true in this case. We will
  assume that $n > r+1$ below.

  Assume that we have $u \in \AA^{r \times n}$ such that for all $J
  \subset [n]$, the tensors in \eqref{eq:tensors} are linearly
  dependent. We prove that $u \in X_v$ by induction on $n$. 

  We start with the case when the rank of $u$ is less than $r$.
  Assume that the last column of $u$ is non-zero, since we are done if
  it is. Applying elements of $\GL_r$ we may assume that the $r$th row
  of $u$ is all zeros and the last entry of $v_n$ is non-zero.

  Let $u' = (u_1,\dots,u_{n-1})$ and likewise for $v$. By induction on
  $n$, we know that there is some element $(g(s),t(s)) \in
  \GL_r(\kk(\!(s)\!)) \times (\kk(\!(s)\!)^\times)^{n-1}$ such that
  $g(s)\,v'\,t(s)$ has coordinates in $\kk[[s]]$ and
  \[
  g(s)\,v'\,t(s) \equiv u' \mod s.
  \]
  Since the bottom row of $u'$ is all zeros we can replace the bottom
  row of $g(s)$ with $(0,\dots,0,s^m)$, $m >\!\!\!>0$, and obtain the
  same reduction modulo $s$. Now let $t'(s) = (t(s),s^{-m}) \in
  (\kk(\!(s)\!)^\times)^n$ and consider the matrix
  \[
  g(s)\, v\, t'(s).
  \]
  Setting $s=0$ yields a matrix whose first $n-1$ columns agree with
  those of $w$ and whose last column is the $r$th standard basis
  vector of $\kk^r$. Applying the element of $\End(\kk^r)$ that fixes
  the first $r-1$ basis vectors and sends the last to $u_n$, we bring
  this matrix to $u$. We conclude that $u \in X_v$.

  Suppose that $u$ has rank $r$. If $u$ has a connected matroid then
  Proposition~\ref{prop:kap-tensors} shows that $u \in X_v^\circ$. We
  thus reduce to the case that $u$ has a disconnected matroid. Our
  goal is to show that $M(u)$ has a connected component $K$ such that
  the orbit of $u_K$ equals the orbit of $v_K$.

  For any $J \subset [n]$, the rank of $(v_J)^\perp$ is $\dim
  \ker(v_J)=|J| - \rk_{M(v)}(J)$. If $K \subset J$, then the
  restriction of $(v_J)^\perp$ to the columns indexed by $K$ has rank
  $\dim \ker(v_J) - \dim \ker(v_{J \setminus K})$.

  Since the tensors in \eqref{eq:tensors} are dependent, it follows
  that for any $J \subset [n]$ there is a connected component $K$ of
  $M(u)$ with $J \cap K$ non-empty and $\dim \ker(v_J) - \dim \ker (v_{J
    \setminus K})$ linearly independent dependences among $u_{J \cap
    K}$. That is, $$\dim \ker(u_{J \cap K}) \geq \dim\ker(v_J) - \dim
    \ker(v_{J \setminus K}),$$ and hence
  \begin{equation}\label{eq:constructing components}
    \rk u_{J \cap K} \leq \rk v_J - \rk v_{J \setminus K}.
  \end{equation}
  for some connected component $K$ of $M(u)$. 
H
  Applying \eqref{eq:constructing components} with $J = [n]$ we obtain a
  component $K_1$ of $M(u)$. Apply \eqref{eq:constructing components}
  again with $J = [n] \setminus K_1$ and obtain a connected component
  $K_2$ of $M(u)$. Continue in this way to obtain $K_1, \dots,
  K_\ell$, an ordering of the components of $M(u)$. Summing the
  inequalities obtained from \eqref{eq:constructing components}
  yields,
  \begin{multline*}
    \rk(u_{K_1}) +\rk(u_{K_2})+ \dots +\rk(u_{ K_\ell}) \leq (\rk v -
    \rk v_{[n] \setminus K_1})\\+
(\rk v_{[n] \setminus K_1}- \rk v_{[n] \setminus K_1 \cup K_2}) + \dots + (\rk v_{K_\ell} - \rk
    v_{\emptyset})
  \end{multline*}
  The left and right sides of this are both $r$ and hence all the
  inequalities above are all equalities. It follows that $\rk
  (u_{K_\ell}) =\rk(v_{K_\ell})$. We know that $u_{K_\ell} \in
  X_{v_{K_\ell}}$ by the induction hypothesis, and thus $v_{K_\ell}$
  is connected because $u_{K_\ell}$ is. We conlcude from
  Proposition~\ref{prop:kap-tensors} that the orbit of $v_{K_\ell}$
  equals the orbit of $u_{K_\ell}$. We thus take $u_{K_\ell} =
  v_{K_\ell}$.

  Setting $K_\ell^{\rm c} = [n] \setminus K_\ell$, there is some $g(s),t(s)$
  such that $g(s)\, v_{K_\ell^{\rm c}}\, t(s) \equiv u_{J^{\rm c}}$ mod
  $s$. Since the first $\rk(u_{K_\ell})$ rows of $u_{{K_\ell}^{\rm c}}$ can be taken
  to be zero, we replace the first $\rk(u_{K_\ell})$ rows of $g(s)$ with the
  corresponding rows of $s^m\mathrm{Id}_r$ for $m\gg0$, and apply
  $g(s),(s^{-m},\dots,s^{-m},t(s))$ to $v$. The result is $u$.
\end{proof}

\section{Multigraded Hilbert series and $K$-polynomials}\label{sec:K}
In this section we define the multigraded Hilbert series and 
$K$-polynomial of a $G$-equivariant $R$-module. We then propose the
fundamental question of our work, which is on the matroid invariance
of the $K$-polynomial of the coordinate ring of a matrix orbit
closure. We then give a formula for this $K$-polynomial when $v \in
\AA^{2 \times n}$ has a uniform rank $2$ matroid.

\subsection{Background on Hilbert series}
Let $R$ denote the polynomial ring $$\kk[x_{ij}: i \in [r], j \in
[n]]$$ and regard $\AA^{r\times n}$ as $\Spec R$. $R$ is graded by
$\ZZ^r \times \ZZ^n$, the degree of $x_{ij}$ being $a_i + b_j $,
where $a_1, \dots, a_r, b_1, \dots, b_n$ are the standard basis
vectors of $\ZZ^r \times \ZZ^n$. The grading group should be thought
of as the weight lattice of the maximal torus in $G=\GL_r(\kk) \times T$
obtained as (the diagonal torus of $\GL_r(\kk))\times T$.

Any finitely generated graded $R$-module $M = \bigoplus_{(\a,\b) \in
  \ZZ^r \times \ZZ^n} M_{(\a,\b)}$ has \textbf{Hilbert series}
\[
\Hilb(M) =   \sum_{(\a,\b) \in \ZZ^r \times \ZZ^n}
\dim_\kk(M_{(\a,\b)} ) u^\a t^\b \in \ZZ[[u_1^{\pm 1},\dots,u_r^{\pm
  1},t_1^{\pm 1},\dots,t_n^{\pm 1}]].
\]
By \cite[Theorem~8.20]{miller-sturmfels}, there is a Laurent
polynomial $\K(M;u,t)$ such that
\[
\Hilb(M)=\frac{\K(M;u,t)}{\prod_{i=1}^r \prod_{j=1}^n (1-u_it_j)},
\]
and we refer to this polynomial as the \textbf{$K$-polynomial} of
$M$. In particular, if $X \subset \AA^{r \times n}$ is a closed
subvariety with defining ideal $I$, we write $\K(X;u,t)$ for the
$K$-polynomial of $R/I$. We will often write the $K$-polynomials as
$\K(M)$, $\K(X)$, etc..

The ring $R$ has the action of $G$ given by $((g,t) \cdot f)(v) = f(
g^{-1} v t)$. The decomposition of $R$ into its various graded pieces
$R_{(\a,\b)}$ is a refinement of the irreducible decomposition of $R$
as a $G$-module; it is precisely the refinement into weight spaces. It
is important to take care that the gradation and weight space
decompositions have the property that if $f \in
R_{(\a,\b)}$, then $f$ has $\GL_r(\kk)$-weight $-\a$.
% Indeed the piece R_{(\a,b\)} is spanned by monomials of the form
% m = x_{i,j}^{d_{i,j}} where sum_j d_{i,j} = a_i We see that a diagonal
% matrix g = diag(l_1, ... , l_r) acts on this monomial by
% g.m = l_1^{-a_1} ... l_r^{-a_r} m.
The arguably more natural convention of setting $\deg(x_{ij}) = b_j -
a_i$ results in ugly formulas and does not agree with the standard
grading of $R$. Thus, given a $G$-equivariant graded module $M$ we
pass back and forth between its character, as a $G$-module, and its
Hilbert series by inverting all the $u$ variables.

Let $K^0_G(\AA^{r \times n})$ denote the Grothendieck group of
$G$-equivariant coherent $R$-modules. Since $\AA^{r \times n}$ is a
trivial vector bundle over a point, there is a natural
identification %\cite[Theorem~11]{merkurjev}
of $K^0_G(\AA^{r \times n})$ with the Grothendieck group of rational
representations of $G$. Indeed, we have
\begin{align*}
K^{0}_{G}(\AA^{r\times n})
  &= \ZZ[u_1,\ldots,u_r,t_1,\ldots,t_n][u_1^{-1},\ldots,u_r^{-1},t_1^{-1},\ldots,t_n^{-1}]^{\Ss_r }
\\&= \ZZ[e_1(u),\ldots,e_r(u),t_1,\ldots,t_n][e_r(u)^{-1},t_1^{-1},\ldots,t_n^{-1}],
\end{align*}
where the symmetric group $\Ss_r$ acts on the $u$ variables. Here we
have written $e_i(-)$ for the $i$\/th elementary symmetric polynomial in its
arguments. Under this identification, the $K$-polynomials $\K(X)$ and
$\K(\mathcal{E})$ represent the equivariant $K$-classes of the
structure sheaf of $X$ and the global sections of $\mathcal{E}$,
respectively.

In what follows, we will employ the following standard notation.
\begin{itemize}
\item $\Par_r$ is the set of partitions $\lambda=(\lambda_1
  \geq \dots \geq \lambda_r \geq 0)$ of length at most~$r$.
\item $s_\lambda(x)$ is a Schur polynomial in the list of variables
$x=(x_1,\dots,x_k)$.
\item $e_k(x) = s_{1^k}(x)$ is an elementary symmetric
polynomial.
\item $h_k(x) = s_k(x)$ is a complete homogeneous symmetric
polynomial.
\end{itemize}
It will also be useful to give meaning to $s_\lambda(x_1,\ldots,x_k)$
when $\lambda$ is a $k$-tuple of integers, possibly negative, that need not
be a partition.  We do this using the determinantal formula
\[
s_\lambda(x_1,\ldots,x_k) = 
\frac{\det((x_i^{\lambda_j+k-j})_{i,j})}{\det((x_i^{k-j})_{i,j})}.
\]
In particular, the set 
\[\{s_\lambda(x_1,\ldots,x_k) : \mbox{$\lambda\in\ZZ^k$ is
  non-increasing}\}\] is a basis for the symmetric Laurent polynomials
in $x_1,\ldots,x_k$.

\subsection{Two fundamental problems} The main motivation behind our
work is 
\begin{enumerate}
\item to determine if $\K(X_v)$ is determined by $M(v)$ alone, and
\item when it is, determine exact formulae for $\K(X_v)$ in terms of
  $M(v)$.
\end{enumerate}
The first propblem is opposed to whether $\K(X_v)$ is determined by
``higher order'' geometric properties of the configuration $v$. In all
computed cases, $\K(X_v)$ only varies with $M(v)$. We conjecture that
the answer to question (1) is positive:
\begin{conjecture}\label{conj:main}
  $\K(X_v)$ is determined by $M(v)$.
\end{conjecture}
One result motivating the conjecture is that if we replace $X_v$ with
its Grassmannian analog $\cl{\pi(v)T}$ then the result is
true. Specifically, a result of Speyer \cite[Proposition~12.5]{speyer}
says that the $T$-equivariant $K$-theory class of the structure sheaf
of $\cl{\pi(v)T}$ is determined by the matroid $M(v)$. This result
follows by equivariant localization, a tool which is difficult to bring to bear on
the equivariant $K$-theory of $\AA^{r \times n}$.

The second motivating problem appears to be very difficult to answer
for arbitrary $v$. In the rest of this section we give an answer when
$v \in \AA^{2 \times n}$ has matroid $U_{2,n}$. This follows since we
know that the ideal of $X_v$ is determinantal, and so we can compute a
free resolution of its coordinate ring using known results. In
Section~\ref{ssec:hooks} we determine the coefficient of
$s_\lambda(u)t^\b$ when $\b$ is a $\{0,1\}$-vector and $\lambda$ is a
single column. The answer is always $\pm 1$, and the proof of this
relies on results about broken circuit complexes and multivariate
Tutte polynomials.

\subsection{The $K$-polynomial of $X_v$ when $M(v)=U_{2,n}$} 
  Proposition~\ref{prop:I'_v is known prime 2} allows us to explicitly
  determine the $K$-polynomial $\K(X_v)$ when $M(v) = U_{2,n}$.
\begin{proposition}\label{prop:K-polynomial of U_2,n}
  Let $v\in\AA^{2\times n}$ have a uniform matroid.  The
  $K$-polynomial of $X_v$ is
  \[
  \K(X_v)=1 - \sum_{\substack{\lambda=(\lambda_1 \geq \lambda_2) \\
      2 \leq \lambda_2,\ \lambda_1 + \lambda_2 \leq n }}
  (-1)^{|\lambda|} s_\lambda(1,1) s_\lambda(u) e_{|\lambda|}(t).
  \]
\end{proposition}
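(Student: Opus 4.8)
The plan is to exploit the explicit determinantal description of $I''_v = I_v$ provided by Proposition~\ref{prop:I'_v is known prime 2} and compute the $K$-polynomial directly from a minimal free resolution of $R/I_v$. Since $v$ has a uniform matroid of rank~$2$, every $2$-element subset $J$ is independent and every $3$-element subset is a circuit of rank~$2$; the matrix $x\odot v$ from the proposition is $4\times n$, and $I_v = I''_v$ is generated by its size~$4$ minors. Crucially, the cited result \cite[Corollary~4]{eagonHochster} tells us not only that this ideal is prime and Cohen--Macaulay, but that $R/I_v$ is resolved by an Eagon--Northcott-type complex associated to the generic $4\times n$ matrix, specialized via the substitution $x_{ij}\mapsto$ (entry of $x\odot v$). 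Thus the first step is to write $x\odot v$ as a specialization of a generic $4\times n$ matrix $Y$ and to transport the Eagon--Northcott resolution of the ideal of maximal minors of $Y$ through this specialization, checking that no homology is introduced (this is exactly what \cite[Corollary~4]{eagonHochster} guarantees, since the codimension is preserved).

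The second step is to read off the $\ZZ^r\times\ZZ^n$-graded Betti numbers. The columns of $x\odot v$ are the tensors $x_j\otimes v_j$, so column~$j$ has $\GL_2$-weight carrying the $u$-variables and the single torus weight $t_j$; hence the $j$th column of the matrix $Y$ should be assigned multidegree $a_i + b_j$ appropriately, and the Eagon--Northcott complex of $Y$ is built from exterior and symmetric powers of the source and target free modules. Its terms are, up to the shift by the degrees of the $4$ rows (which contribute the $\GL_2$-representation $\SS^{(1,1)}(\kk^2)\otimes(\text{span of rows})$, i.e.\ a $\det$ twist combined with $u$-weights), indexed by: $\bigwedge^{4+k}(\text{column space}) \otimes S_k(\text{row space})^*$ for $k\ge 0$. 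Taking the $K$-polynomial is then an alternating sum of characters of these terms. The exterior powers of the column space contribute $e_m(t)$ times a symmetric function in $u$ coming from the corresponding product of column weights; because each column weight is $u_1t_j$ or $u_2t_j$ in a symmetric way (the two rows of $v$ mixing with the two rows of $x$), the $u$-dependence organizes into Schur polynomials $s_\lambda(u)$ with $\lambda=(\lambda_1\ge\lambda_2)$, and the combinatorial bookkeeping of which $(\lambda,m)$ occur should yield exactly the index set $\{2\le\lambda_2,\ \lambda_1+\lambda_2\le n\}$ with $m=|\lambda|$, and coefficient $(-1)^{|\lambda|}s_\lambda(1,1)$. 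The Schur-positivity factor $s_\lambda(1,1)$ is the dimension of $\SS^\lambda(\kk^2)$, which is $\lambda_1-\lambda_2+1$, and its appearance is natural: the Eagon--Northcott term contributing to shape $\lambda$ is, as a $\GL_2$-representation, a sum of $\dim\SS^\lambda(\kk^2)$ copies of the relevant weight spaces, since the complex is $\GL_2$-equivariant and the multiplicity of the irreducible $\SS^\lambda(\kk^2)$ there equals $s_\lambda(1,1)$.

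The main obstacle I anticipate is the bookkeeping in step two: correctly tracking the $\det$-twist (the rows of $x\odot v$ carry nontrivial $\GL_2$-weight, so the resolution must be shifted by the product of the row degrees, which turns ordinary symmetric functions in $u$ into the Schur polynomials $s_\lambda(u)$ rather than $s_{\lambda'}(u)$ for some other $\lambda'$), and pinning down the precise range of $\lambda$. The constraint $\lambda_1+\lambda_2\le n$ should come from the fact that $\bigwedge^{4+k}$ of an $n$-dimensional space vanishes once $4+k>n$, i.e.\ $|\lambda|=k+4$ forces $|\lambda|\le n$ — wait, we want $\lambda_1+\lambda_2 = |\lambda| \le n$, consistent with $k = |\lambda|-4 \ge 0$ giving $|\lambda|\ge 4$, so $\lambda_2\ge 2$ when $\lambda_1=\lambda_2$ and in general the lower bound $\lambda_2\ge 2$ must be extracted from which exterior powers actually appear in the specialized complex (columns $x_j\otimes v_j$ span a space whose small exterior powers are not all independent). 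I would verify the range and the leading sign by checking the two extreme cases $n=4$ (where the answer must reduce to a single hypersurface term, matching the principal generator found in the proof of Proposition~\ref{prop:I'_v is known prime 2}) and $r=2$, $n=5$ against a direct computation, and by confirming that setting $u=t=1$ (or rather specializing to recover the ungraded Hilbert series) is consistent with the known dimension $\dim X_v = r^2+n-c = 4+n-1 = n+3$ from Proposition~\ref{prop:codim}, hence codimension $2(n-2)-(n+3)=n-7$... which shows I must be careful that the ambient ring here is $R=\kk[x_{ij}]$ of dimension $2n$, giving codimension $2n-(n+3)=n-3$, matching the $n-3$ computed in the proof of Proposition~\ref{prop:I'_v is known prime (n-2)} via Gale duality. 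These consistency checks will fix the remaining constants.
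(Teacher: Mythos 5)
Your plan is essentially the same as the paper's: use $I''_v = I_v$ from Proposition~\ref{prop:I'_v is known prime 2}, resolve $R/I_v$ by the Eagon--Northcott complex of the $4\times n$ matrix $x\odot v$, and read the $K$-polynomial as the alternating sum of characters of the terms. You correctly flag that the bookkeeping of the $\GL_2$-weights is the heart of the matter, but you leave two things unresolved that the paper makes precise.

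First, the key computational fact is the identity
\[
\operatorname{ch}\bigl(\Sym^m \End(\kk^2)\bigr) \;=\; \sum_{\lambda\vdash m,\;\ell(\lambda)\le 2} s_\lambda(1,1)\,s_\lambda(u_1,u_2),
\]
which the paper cites from D\'esarm\'enien--Kung--Rota. This is the Cauchy-type decomposition $\Sym^m(V\otimes W)\cong\bigoplus_{\lambda}\SS^\lambda V\otimes\SS^\lambda W$, applied with $V$ the ``$x$-rows'' copy of $\kk^2$ (carrying the $u$-weights) and $W$ the ``$v$-rows'' copy (on which $\GL_2$ acts trivially). You gesture at this with ``the multiplicity of the irreducible $\SS^\lambda(\kk^2)$ there equals $s_\lambda(1,1)$,'' which is the right intuition, but as written you never derive it; it is a citation, not bookkeeping, and without it the formula does not come out.

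Second, your explanation of the constraint $\lambda_2\ge 2$ is off the mark. You suggest it comes from ``which exterior powers actually appear in the specialized complex (columns $x_j\otimes v_j$ span a space whose small exterior powers are not all independent).'' That cannot be right: over the polynomial ring $R$ the terms $\bw^m R^n$ of the Eagon--Northcott complex are free of full rank for every $m\le n$, regardless of what the image of $\psi_v$ looks like. The constraint $\lambda_2\ge 2$ is exactly the determinant twist you mention earlier: the complex is twisted by $\bw^4$ of the rank-$4$ target $R^2\otimes\kk^2$, which decomposes as $\SS^{(2,2)}(\kk^2_u)\otimes\SS^{(2,2)}(\kk^2_{\text{triv}})$ and so carries the $u$-weight $(u_1u_2)^2$. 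Multiplying a term with $\mu\vdash m-4$ by $(u_1u_2)^2$ turns $s_\mu(u)$ into $s_{(\mu_1+2,\mu_2+2)}(u)$, which is precisely the claimed indexing ($\lambda_2\ge 2$, $|\lambda|\le n$, and $s_\lambda(1,1)=s_\mu(1,1)$). Pinning down this shift is exactly the step you call ``the main obstacle,'' and your sketch does not quite do it.

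Your consistency checks (codimension $n-3$, the $n=4$ hypersurface case) are sound and would catch sign and indexing errors once the two gaps above are filled.
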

\begin{proof}
  The degeneracy locus of the map $\psi_v$ defined by the matrix
  $x\odot v$ is $X_v$ by ---.  Therefore, the ideal $I_v$ of $X_v$ is
  resolved $G$-equivariantly by the Eagon-Northcott complex
  $\C_\bullet(\psi_v) \to I_v \to 0$, wherein
  \[
  \C_m(\psi_v) = \Sym^{m-4} \left(\End_R(R^2)\right) \otimes
  \bw^m R^n , \quad m = 4,5,\dots,n.
  \] % Switch indexing convention?
  This is a minimal resolution since
  $\operatorname{depth}(I_v) = \operatorname{codim}(I_v)=n-3$
  \cite[Theorem A2.10]{eisenbud}, as we have shown above.

  When $R^n$ and $R^2$ are graded by characters of the diagonal torus
  in $G$ acting on~$R$, all the maps in $(\bw^2 R^2) \otimes_R
  \C_\bullet \to I_v \to 0$ are linear maps. To compute the
  $K$-polynomial of the terms of the resolution it suffices to compute
  the character of the $G$-module
  \[
  \left(\bw^2 \kk^2 \otimes \Sym^m \End(\kk^2)\right) \otimes
  \bw^m \kk^n.
  \]
  The character of the $\GL_2(\kk)$-module $\Sym^m \End(\kk^2)$ has been computed by
  D\'esarm\'enien, Kung and Rota \cite{rota} as
  \[
  \sum_{\lambda =(\lambda_1 \geq \lambda_2) \vdash m }
  s_\lambda(1,1)s_\lambda(u_1,u_2).
  \]
  The proposition follows.
\end{proof}

The case when $v \in \AA^{r \times n}$, $r>2$, has matroid $U_{2,n}$
is dealt with in the next section.

\section{Stabilization}\label{sec:stabilization}
The basic operation we consider in this section is embedding a
$G$-invariant subvariety $X \subset \AA^{ r \times n}$ in a matrix
space with more rows, and stabilizing it under the larger general
linear group action. We extend this notion to certain equivariant
coherent modules, and describe what it does at the level of
$K$-polynomials.

\subsection{The structure of $R$ as a $G$-module}
We need to recall the decomposition of the ring $R$ as a module for
the group $G$. This decomposition can be gleaned from the Cauchy
identity,
\[
\Hilb(R) = \prod_{\substack{1 \leq i \leq r\\1 \leq j \leq n}} \frac{1}{1-u_i t_j} = \sum_{\lambda} s_\lambda(u_1,\dots,u_r) s_\lambda(t_1,\dots,t_n),
\]
which decomposes $R$ as module for $\GL_r(\kk) \times \GL_n(\kk)$. It gives the
irreducible decomposition,
\[
R \approx \bigoplus_{\lambda} \SS^{\lambda}(\kk^r)^\vee \otimes \SS^\lambda(\kk^n),
\]
the direct sum running over partitions $\lambda$ with at most $\min
\{r,n\}$ parts. To obtain the irreducible decomposition as a
$G$-module one takes the weight space decomposition of
$\SS^\lambda(\kk^n)$ and obtains
\[
R \approx \bigoplus_{\lambda, \tau} \SS^{\lambda}(\kk^r)^\vee \otimes \kk_{\operatorname{cont}(\tau)},
\]
where the sum is over partitions $\lambda$ with at most $r$ parts and
semistandard Young tableaux $\tau:\lambda \to \{1,2,\dots,n\}$. Here
$\kk_{\operatorname{cont}(\tau)}$ is the one-dimensional
representation of $T$ with action $t \cdot 1 = \prod_{i=1}^n
t_i^{\operatorname{cont}(\tau)_i}$.

The ring $R$ has the $\kk$-linear basis of standard bitableaux, which
we now describe. If the shapes of $\sigma$ and $\tau$ are both
$(1^\ell)$, $\ell \leq \min\{r,n\}$, then the \textbf{bitableau}
$(\sigma,\tau)$ is the determinant of the square submatrix of $x =
(x_{ij})$ whose rows are selected by the entries of $\sigma$ and
whose columns are selected by the entries of $\tau$, both in order. When
$\sigma$ and $\tau$ have the same shape and more than one column, one
takes the product of the bitableaux obtained from corresponding pairs
of columns of $\sigma$ and $\tau$. A \textbf{standard bitableau} is a
bitableaux $(\sigma,\tau)$ where both $\sigma$ and $\tau$ are
semistandard Young tableaux.

A $G$ \textbf{lowest weight vector} of $R$ is a $\ZZ^r \times
\ZZ^n$-graded homogeneous polynomial that is fixed by the subgroup of
$\GL_r(\kk)$ consisting of upper-triangular matrices with $1$'s on the
diagonal.  Explicitly, the standard bitableaux $(\sigma,\tau)$ where
row $i$ of $\sigma$ is filled with the number $i$ form a basis for the
space of lowest weight vectors of~$R$. Every irreducible $G$-module in
$R$ possesses a unique lowest weight vector which is a linear
combination of these special bitableaux. It is important in what
follows that every lowest weight vector in $R$ whose weight is
$\lambda$ only involves those variables $x_{ij}$ with $i \leq
\ell(\lambda)$.

\subsection{Stabilization of modules} 
For the rest of this section we need to emphasize the number of rows
of the matrices we are working with. As such, we write $R_r$ for
$\kk[x_{ij}: i \in [r], j\in [n]]$ and $G_r$ for $\GL_r(\kk) \times T$. We
have a tower of $\kk$-algebras
\[
 R_1 \subset R_2 \subset  \cdots \subset R_r \subset R_{r+1} \subset \cdots
\]
given by adding a row of indeterminates. 

We also have a tower of groups
\[
G_1 \subset G_2 \subset \cdots \subset G_r \subset G_{r+1} \subset \cdots
\]
given by taking the direct sum with a $1$-by-$1$ identity matrix. The
two towers are compatible in the sense that the $G_r$ lowest weight
vectors of $R_r$ whose weight $\lambda$ has $s$ non-zero parts are
contained in $R_s$, and are $G_s$ lowest weight vectors therein. It
follows that if we take the smallest $G_{r+1}$ representation in
$R_{r+1}$ containing a fixed $G_r$ representation within $R_r$, the
characters are the same in the sense that one is obtained from the
other by replacing the Schur polynomial
$s_\lambda(u_1^{-1},\dots,u_r^{-1})$ with
$s_\lambda(u_1^{-1},\dots,u_{r+1}^{-1})$.

Let $V$ be a finite dimensional representation of the torus $T$. We
can view $V$ as a representation of $G_r$ by letting the $\GL_r(\kk)$
factor of $G_r$ act trivially. Consider the equivariant finite free
module $R_r \otimes V$ and a $G_r$-equivariant submodule $N_r$
thereof. There are inclusions
\[
N_r \subset R_r \otimes V \subset R_{r+1} \otimes V ,
\]
and we define $N_{r+1}$ to be the smallest $G_{r+1}$-equivariant
$R_{r+1}$-module satisfying $N_r \subset N_{r+1} \subset R_{r+1}
\otimes V$.
Let $J_{r}$ be the ideal in $R_{r}$ generated by the size
$r$ minors of the coordinate matrix $[x_{ij}]$.
\begin{proposition}\label{prop:rho character}
  Suppose that the character of $N_r$ is $$\sum_{\lambda \in \Par_r,\a \in \ZZ^n}
  d_{\lambda,\a} s_\lambda(u^{-1}_1,\dots,u^{-1}_r) t^\a.$$ Then, the
  character of $N_{r+1} + (J_{r+1} \otimes V)$ is
  \[
  \sum_{\lambda \in \Par_r,\a \in \ZZ^n}
  d_{\lambda,\a} s_\lambda(u^{-1}_1,\dots,u^{-1}_r,u^{-1}_{r+1}) t^\a.
  \]
  plus the character of $J_{r+1} \otimes V$.
\end{proposition}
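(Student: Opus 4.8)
The plan is to pass from $G_r$-equivariant modules to their characters, and to track how the smallest $G_{r+1}$-enlargement affects those characters, working one isotypic component at a time. First I would reduce to the case that $N_r$ is a single irreducible $G_r$-submodule of $R_r \otimes V$, by additivity of characters: since $N_r$ decomposes as a direct sum of $G_r$-irreducibles, and the $N_{r+1}$ operation (taking the smallest $G_{r+1}$-submodule containing $N_r$) commutes with direct sums, it suffices to understand a single summand. Such a summand has the form $W \otimes \kk_{\mathbf a}$, where $W \cong \SS^\lambda(\kk^r)^\vee$ sits inside $R_r$ (via the irreducible decomposition of $R_r$ recalled above, tensored against a $T$-weight line from $V$), contributing $s_\lambda(u_1^{-1},\dots,u_r^{-1})\,t^{\mathbf a}$ to the character.

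Next I would invoke the compatibility of the two towers stated in the text: a $G_r$ lowest weight vector of $R_r$ of weight $\lambda$ (with $\ell(\lambda)=s$ parts) already lies in $R_s$ and is a $G_s$ lowest weight vector there. Hence the lowest weight vector generating our summand $W$ lies in $R_s \subset R_{r+1}$, and the smallest $G_{r+1}$-submodule of $R_{r+1}\otimes V$ it generates is the irreducible $\SS^\lambda(\kk^{r+1})^\vee \otimes \kk_{\mathbf a}$, whose character is exactly $s_\lambda(u_1^{-1},\dots,u_{r+1}^{-1})\,t^{\mathbf a}$ — the claimed substitution $s_\lambda(u^{-1}_1,\dots,u^{-1}_r) \rightsquigarrow s_\lambda(u^{-1}_1,\dots,u^{-1}_{r+1})$. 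Summing over all irreducible constituents of $N_r$ produces the first displayed sum in the proposition.

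The remaining point — and the one I expect to be the main obstacle — is that $N_{r+1}$ (the $R_{r+1}$-module generated by these enlarged fibers) may strictly contain the sum of the irreducibles just described, because multiplying a lowest weight vector supported on rows $1,\dots,s$ by variables $x_{ij}$ with $i > s$ can produce elements that were \emph{not} in $N_r$ but become forced once we close under the $R_{r+1}$-action and $G_{r+1}$-action together; the discrepancy is precisely absorbed by the ideal $J_{r+1}$ of maximal minors. I would argue this by showing that modulo $J_{r+1}\otimes V$, the image of $N_{r+1}$ in $(R_{r+1}/J_{r+1})\otimes V$ agrees with the image of $\bigoplus W_{r+1}$: concretely, $R_{r+1}/J_{r+1}$ is the coordinate ring of the variety of matrices of rank $\le r$, on which a lowest weight vector of weight $\lambda$ with $\ell(\lambda) \le r$ still generates an irreducible of the same $\GL$-type, so no new isotypic components appear beyond those in $\bigoplus W_{r+1}$ after quotienting. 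This gives $N_{r+1} + (J_{r+1}\otimes V) = \big(\bigoplus W_{r+1}\big) + (J_{r+1}\otimes V)$, and taking characters — using inclusion–exclusion with the term $J_{r+1}\otimes V$ and noting that the overlap is again inside $J_{r+1}\otimes V$ — yields the first sum plus $\Char(J_{r+1}\otimes V)$, as asserted. The technical care needed in that last step is checking that the intersection $\big(\bigoplus W_{r+1}\big) \cap (J_{r+1}\otimes V)$ contributes no correction term beyond what is already counted, which follows because $\bigoplus W_{r+1}$ maps \emph{onto} its image in the quotient with kernel exactly that intersection.
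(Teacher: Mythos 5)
The overall strategy is close to the paper's — the paper also separates the lowest weight vectors of $N_{r+1}+(J_{r+1}\otimes V)$ into those with $\lambda_{r+1}\neq 0$ (all of which live in $J_{r+1}\otimes V$) and those with $\lambda_{r+1}=0$ — but your third paragraph, which is where you yourself say the real difficulty lies, has a genuine gap.

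You claim that modulo $J_{r+1}\otimes V$ the image of $N_{r+1}$ agrees with the image of $\bigoplus W_{r+1}$, and your justification is that ``a lowest weight vector of weight $\lambda$ with $\ell(\lambda)\le r$ still generates an irreducible of the same $\GL$-type, so no new isotypic components appear beyond those in $\bigoplus W_{r+1}$ after quotienting.'' This conflates two very different operations. The $\GL_{r+1}$-module generated by a lowest weight vector is always a single irreducible of the expected type — that is true whether or not you have passed to $R_{r+1}/J_{r+1}$, and it tells you nothing. The object whose isotypic components you must control is the \emph{$R_{r+1}$-module} (or $R_{r+1}/J_{r+1}$-module) generated by $\bigoplus W_{r+1}$, and multiplying by ring elements certainly does create new isotypic components, in abundance: the $R_r$-module generated by the single irreducible $\<x_{ij}\>$, for instance, is the whole maximal ideal, which contains every type $\ell(\lambda)\le r$. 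So the sentence quoted above does not rule out the scenario you say you are ruling out, namely that $N_{r+1}$ contains lowest weight vectors of type $\ell(\lambda)\le r$ that lie outside $\bigoplus W_{r+1}+(J_{r+1}\otimes V)$.

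What actually closes this gap is a grading argument. Every lowest weight vector of weight $\lambda$ with $\lambda_{r+1}=0$ has degree $0$ in the $u_{r+1}$-grading on $R_{r+1}\otimes V$, hence lies in $R_r\otimes V$. Since $J_{r+1}\otimes V$, $N_{r+1}$, and $\bigoplus W_{r+1}$ are all graded submodules, taking degree-$0$ pieces commutes with sums, and one checks that $(J_{r+1}\otimes V)_0 = 0$ while $(N_{r+1})_0 = (\bigoplus W_{r+1})_0 = N_r$ (the last because $N_{r+1}=R_{r+1}\cdot\bigoplus W_{r+1}$, and multiplying by a ring element of positive $u_{r+1}$-degree cannot contribute to degree $0$). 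Therefore $\big(N_{r+1}+(J_{r+1}\otimes V)\big)\cap(R_r\otimes V) = N_r$, and the lowest weight vectors of $N_{r+1}+(J_{r+1}\otimes V)$ with $\lambda_{r+1}=0$ are exactly those of $N_r$. This is the unproved claim you need; the paper invokes it as the sentence ``Intersecting $N_{r+1}+(J_{r+1}\otimes V)$ with $R_r\otimes V$ returns $N_r$.'' With this in hand, the module equality you assert, $N_{r+1}+(J_{r+1}\otimes V)=\bigoplus W_{r+1}\oplus(J_{r+1}\otimes V)$, does hold, and the character computation follows; but as written, that equality is assumed rather than proved.
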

\begin{proof}
  Since $V$ is a sum of trivial representations of $\GL_r(\kk) \subset
  G_r$, every lowest weight vector of $R_{r+1} \otimes V$ whose weight
  $\lambda$ satisfies $\lambda_{r+1} \neq 0$ is contained in $J_{r+1}
  \otimes V$. All of these lowest weight vectors are contained in
  $N_{r+1} + (J_{r+1} \otimes V)$. Any other lowest weight vector of
  $N_{r+1} + (J_{r+1} \otimes V)$ has a weight $\lambda$ which
  satisfies $\lambda_{r+1} = 0$. Such lowest weight vectors are
  contained in $R_r \otimes V$. Intersecting $N_{r+1} + (J_{r+1}
  \otimes V)$ with $R_r \otimes V$ returns $N_r$, and thus every such
 lowest weight vector must be a lowest weight vector of $N_r$.
\end{proof}

Let $M$ be a $G_r$-equivariant $R_r$-module with an equivariant
presentation of the form
\begin{equation}\label{stabilization}
  0 \to N_r \to R_r \otimes V \to M \to 0.  
\end{equation}
Define a \textbf{stabilization} of $M$ to be $G_{r+1}$-equivariant
$R_{r+1}$-module $\stab(M)$ making the following sequence exact:
\[
0 \to N_{r+1} + (J_{r+1} \otimes V) \to R_{r+1} \otimes V \to \stab(M)
\to 0.
\]

\begin{example}
  A stabilization of $R_r$ is $R_{r+1}/J_{r+1}$. More generally, a
  stabilization of a quotient $R_r/I_r$ is $R_{r+1}/(I_{r+1} +
  J_{r+1})$, where $I_{r+1}$ is the smallest $G_{r+1}$ invariant ideal
  in $R_{r+1}$ containing $I_r$.
\end{example}

Given a $G_r$-stable closed subvariety $X \subset \AA^{r \times n}$,
we let $\stab(X)$ denote the smallest $G_{r+1}$-stable closed
subvariety of $\AA^{(r+1) \times n}$ that contains $X$, namely
$\cl{G_{r+1} X}$. 
\begin{lemma}\label{lem:priming sheaf priming support}
  Let $X$ be a $G_r$-stable closed subvariety of $\AA^{r \times
    n}$. Then, the coordinate ring of $\rho(X)$ is a stabilization of
  the coordinate ring of $X$.
\end{lemma}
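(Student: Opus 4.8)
The plan is to pin down the prime ideal $I(\rho(X))$ exactly and match it with the ideal defining the stabilization. Write $I_r=I(X)\subseteq R_r$, and let $I_{r+1}$ be the smallest $G_{r+1}$-invariant ideal of $R_{r+1}$ containing $I_r$. What must be shown is the equality $I(\rho(X))=I_{r+1}+J_{r+1}$: granting it, $R_{r+1}/I(\rho(X))$ is precisely the stabilization of $R_r/I_r$ attached to the presentation $0\to I_r\to R_r\to R_r/I_r\to 0$, as recorded in the Example following the definition of stabilization. The first step is a geometric reduction. Let $\AA^{(r+1)\times r}$ have coordinates $a_{ik}$ and consider the morphism $\phi\colon\AA^{(r+1)\times r}\times X\to\AA^{(r+1)\times n}$, $(A,w)\mapsto Aw$. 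Since $X$ is $T$-stable, for $g\in\GL_{r+1}(\kk)$ and $t\in T$ we have $(g,t)\cdot\binom{w}{0}=A(wt^{-1})$, where $A$ is the rank-$r$ submatrix of $g$ on its first $r$ columns and $wt^{-1}\in X$; conversely every $Aw$ with $\rk A=r$ lies in $G_{r+1}\cdot X$, while those with $\rk A<r$ are limits of such. As $\rho(X)=\cl{G_{r+1} X}$ by definition, this identifies $\rho(X)=\cl{\im\phi}$, and because $\AA^{(r+1)\times r}\times X$ is a variety, $I(\rho(X))$ is the kernel of the comorphism $\phi^{\#}\colon R_{r+1}\to\kk[a_{ik}]\otimes(R_r/I_r)$; in particular this kernel is automatically radical, which re-derives that $\rho(X)$ is a variety.

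I would then check $I_{r+1}+J_{r+1}\subseteq\ker\phi^{\#}$. The map $\phi^{\#}$ sends the coordinate matrix $x$ to the product $A\cdot\bar x$, where $\bar x$ is the image in $R_r/I_r$ of the generic $r\times n$ matrix; this product has rank at most $r$, so all its size-$(r+1)$ minors vanish and $J_{r+1}\subseteq\ker\phi^{\#}$. For $f\in I_r\subseteq R_r$ one gets $\phi^{\#}(f)=f(A_{\le r}\bar x)$, where $A_{\le r}=(a_{ik})_{i,k\le r}$; since $X$ is closed and $\GL_r(\kk)$-stable it is stable under the full monoid $\End(\kk^r)=\cl{\GL_r(\kk)}$ (every singular matrix being a limit of invertible ones), so $I_r$ vanishes on $A_{\le r}\bar x$ and $\phi^{\#}(f)=0$, giving $I_r\subseteq\ker\phi^{\#}$. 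Finally $\rho(X)$ is a $G_{r+1}$-orbit closure, so $\ker\phi^{\#}=I(\rho(X))$ is a $G_{r+1}$-invariant ideal, and containing $I_r$ it must contain the smallest such ideal $I_{r+1}$.

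The reverse inclusion $\ker\phi^{\#}\subseteq I_{r+1}+J_{r+1}$ is the heart of the matter, and the step I expect to be the main obstacle. Both ideals are $G_{r+1}$-invariant, so by the decomposition of $R_{r+1}$ into irreducible $G_{r+1}$-modules recalled in Section~\ref{sec:stabilization} it is enough to show that they contain the same lowest weight vectors. If not, the nonzero $G_{r+1}$-submodule $\ker\phi^{\#}/(I_{r+1}+J_{r+1})$ of $R_{r+1}/(I_{r+1}+J_{r+1})$ contains the image of a lowest weight vector $\xi$ of $R_{r+1}$, of some weight $\mu$, with $\xi\in\ker\phi^{\#}$ but $\xi\notin I_{r+1}+J_{r+1}$. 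Since $\xi\notin J_{r+1}$, we must have $\mu_{r+1}=0$, so the structural fact that a lowest weight vector of weight $\mu$ involves only the variables $x_{ij}$ with $i\le\ell(\mu)$ forces $\xi\in R_r$. But then $0=\phi^{\#}(\xi)=\xi(A_{\le r}\bar x)$, and specializing $A_{\le r}$ to the identity matrix yields $\xi\equiv 0$ in $R_r/I_r$, i.e.\ $\xi\in I_r\subseteq I_{r+1}$ --- a contradiction. This gives $I(\rho(X))=I_{r+1}+J_{r+1}$, and hence the lemma.

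Thus the delicate point is the dichotomy in the last paragraph: any lowest weight vector appearing in $I(\rho(X))$ either uses all $r+1$ rows, and so lies in $J_{r+1}$, or uses at most $r$ of them, and so descends to a lowest weight vector of $R_r$ that already lies in $I_r$. A self-contained alternative would avoid $\phi$ altogether: compute the $G_{r+1}$-character of $R_{r+1}/(I_{r+1}+J_{r+1})$ from Proposition~\ref{prop:rho character}, compute the Hilbert series of $R_{r+1}/I(\rho(X))$ directly, and note that the evident surjection of the former onto the latter is an isomorphism because the two Hilbert series agree; but the argument through lowest weight vectors fits more naturally with the tools already developed.
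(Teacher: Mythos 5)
Your argument is correct and essentially the same as the paper's: both reduce to classifying the lowest weight vectors of $I(\rho(X))$ according to whether $\mu_{r+1}$ vanishes, placing them in $I_r$ or $J_{r+1}$ accordingly. The morphism $\phi$ and the observation that closedness plus $\GL_r(\kk)$-stability force $\End(\kk^r)$-stability are useful added scaffolding for verifying the containment $I_{r+1}+J_{r+1}\subseteq I(\rho(X))$, which the paper's terse proof leaves essentially implicit.
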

\begin{proof}
  Denote the ideal defining $X$ by $I_r \subset R_r$. Suppose we have
  a lowest weight vector of $R_{r+1}$ that vanishes on $\rho(X)$. If
  it has weight $\lambda$ satisfying $\lambda_{r+1} = 0$ then it must
  lie in $R_r \subset R_{r+1}$. It follows that this lowest weight
  vector is in $I_r$, since $\rho(X)\cap\AA^{r\times n} = X$. 
  If the weight $\lambda$ has $\lambda_{r+1} \neq
  0$ then it is in $J_{r+1}$. Since $\rho(X)$ contains no rank $r+1$
  matrices, its ideal contains $J_{r+1}$ which contains all lowest
  weight vectors whose weight $\lambda$ satisfies $\lambda_{r+1} \neq
  0$. It follows that the ideal defining $\rho(X)$ is $I_{r+1} +
  J_{r+1}$.
\end{proof}

\subsection{Stabilization of $K$-polynomials}\label{ssec:K stabilization}
Although stabilization of modules is not unique, its effect on
$K$-classes is. This is most easily understood at the level of Hilbert
series.
\begin{lemma}\label{lem:row of zeros}
  Let $M$ be a $G_r$-equivariant $R_r$-module, with a presentation as
  in \eqref{stabilization}. Let $\rho(M)$ be any stabilization of
  $M$. Write
\begin{equation}\label{eq:row of zeros expansion}
  \Hilb( M ) =
  \sum_{\lambda \in \Par_r, \a \in \NN^n} d_{\lambda, \a} s_\lambda(u_1,\dots,u_r) t^\a.
\end{equation}
  Then,
  \[
  \Hilb(\rho(M))= \sum_{\lambda \in \Par_r, \a \in \NN^n}
  d_{\lambda,\a} s_\lambda(u_1,\dots,u_r,u_{r+1}) t^\a.
  \]
\end{lemma}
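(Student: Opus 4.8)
The plan is to reduce the statement about Hilbert series directly to the character computation in Proposition~\ref{prop:rho character}, translating between Hilbert series and $G$-characters by the inversion-of-$u$-variables convention set up in Section~\ref{sec:from algebra}. Recall that if a $G_r$-equivariant module has character $\sum c_{\lambda,\a}\, s_\lambda(u^{-1})\, t^\a$, then its Hilbert series is obtained by inverting the $u$'s, giving $\sum c_{\lambda,\a}\, s_\lambda(u)\, t^\a$; conversely the expansion \eqref{eq:row of zeros expansion} of $\Hilb(M)$ records the character of $M$ as $\sum d_{\lambda,\a}\, s_\lambda(u^{-1})\, t^\a$. So the coefficients $d_{\lambda,\a}$ in the lemma are exactly the character coefficients appearing in the hypothesis of Proposition~\ref{prop:rho character}, applied to $N_r$ in the presentation \eqref{stabilization}.

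First I would apply Proposition~\ref{prop:rho character} to the submodule $N_r$ occurring in the presentation \eqref{stabilization}, obtaining that the character of $N_{r+1}+(J_{r+1}\otimes V)$ equals $\sum d'_{\lambda,\a}\, s_\lambda(u^{-1}_1,\dots,u^{-1}_{r+1})\, t^\a$ plus the character of $J_{r+1}\otimes V$, where the $d'_{\lambda,\a}$ are the character coefficients of $N_r$. Here a small translation is needed: the hypothesis of \ref{prop:rho character} is phrased in terms of the character of $N_r$, while the lemma's hypothesis \eqref{eq:row of zeros expansion} is about $\Hilb(M)$; I would use the short exact sequence \eqref{stabilization} to write $\Hilb(N_r) = \Hilb(R_r\otimes V) - \Hilb(M)$, hence the character of $N_r$ is (character of $R_r\otimes V$) minus $\sum d_{\lambda,\a} s_\lambda(u^{-1}) t^\a$. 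Then the defining sequence $0\to N_{r+1}+(J_{r+1}\otimes V)\to R_{r+1}\otimes V\to \rho(M)\to 0$ gives $\Hilb(\rho(M)) = \Hilb(R_{r+1}\otimes V) - \Hilb(N_{r+1}+(J_{r+1}\otimes V))$.

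The key cancellation is that the $J_{r+1}\otimes V$ contributions and the difference between $\Hilb(R_r\otimes V)$ (in $r$ $u$-variables) and $\Hilb(R_{r+1}\otimes V)$ (in $r+1$ $u$-variables) conspire to cancel. Concretely, by the Cauchy identity the character of $R_{r+1}$ is $\sum_{\mu} s_\mu(u^{-1}_1,\dots,u^{-1}_{r+1}) s_\mu(t)$, and the ideal $J_{r+1}$ consists precisely of those isotypic pieces with $\ell(\mu)=r+1$ (the summands with $\lambda_{r+1}\neq 0$), so the character of $R_{r+1}/J_{r+1}$ is $\sum_{\ell(\mu)\le r} s_\mu(u^{-1}_1,\dots,u^{-1}_{r+1}) s_\mu(t)$, which is exactly the character of $R_r$ with $s_\mu(u^{-1}_1,\dots,u^{-1}_r)$ replaced by $s_\mu(u^{-1}_1,\dots,u^{-1}_{r+1})$. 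Tensoring with the fixed $T$-representation $V$ (on which $\GL$ acts trivially) preserves this, so $\Hilb(R_{r+1}\otimes V) - (\text{char of }J_{r+1}\otimes V\text{, with }u\text{'s inverted back})$ is the stabilization of $\Hilb(R_r\otimes V)$ in the sense of the lemma. Combining the three displayed identities, after inverting the $u$-variables to pass back to Hilbert series, everything lines up: $\Hilb(\rho(M))$ equals the stabilization of $\Hilb(M)$, i.e. $\sum d_{\lambda,\a} s_\lambda(u_1,\dots,u_{r+1}) t^\a$.

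The main obstacle I anticipate is purely bookkeeping: keeping straight the inversion-of-$u$ convention (Hilbert series versus character) consistently across all four modules in play, and verifying that the ``plus the character of $J_{r+1}\otimes V$'' terms from Proposition~\ref{prop:rho character} cancel against the corresponding difference of free-module Hilbert series rather than being left over. Care is also needed that $V$ being a sum of trivial $\GL$-representations is genuinely used (it is what makes $J_{r+1}\otimes V$ the span of the high-length lowest weight vectors of $R_{r+1}\otimes V$), and that stabilization, although not unique as a module, yields a well-defined $K$-polynomial precisely because the Hilbert series computed above depends only on the data $d_{\lambda,\a}$.
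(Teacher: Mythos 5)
Your proof is correct and is essentially the same argument the paper gives in a much more condensed form: the paper's three-sentence proof just says that the character of $R_{r+1}/J_{r+1}$ is the stabilization of the character of $R_r$, and then appeals to additivity along the two defining exact sequences together with Proposition~\ref{prop:rho character}; you spell out exactly this bookkeeping. One small remark: the opening paragraph asserts that the $d_{\lambda,\a}$ of the lemma are the character coefficients of $N_r$, which is not right (they are the coefficients of $M$), but your second paragraph immediately notices and repairs this by subtracting from $\Hilb(R_r\otimes V)$, so the argument as a whole is sound.
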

\begin{proof}
  The character of $R_{r+1}$ is the character of $J_{r+1}$ plus the
  character of $R_{r+1}/J_{r+1}$. The latter is obtained from that of
  $R_r$ by replacing each $s_\lambda(u_1^{-1},\dots,u_r^{-1})$ with
  $s_\lambda(u_1^{-1},\dots,u_{r+1}^{-1})$. Now appeal to the
  additivity of characters along exact sequences and
  Proposition~\ref{prop:rho character}.
\end{proof}
We proceed to define a collection of linear operators
\[
\rho_k : \ZZ[u_1,\ldots,u_r]^{\Ss_r}\to
\ZZ[u_1,\ldots,u_r,u_{r+1}]^{\Ss_{r+1}},
\] 
which we extend to maps $\ZZ[u,t^{\pm 1}]^{\Ss_r}\to
\ZZ[u,u_{r+1},t^{\pm 1}]^{\Ss_r}$
% our rings of $K$-polynomials, these aren't defined on things with
% positive powers of u
by letting them act
linearly on the $t$ variables.  
Using the extension of the notation $s_\lambda$ in Section~\ref{sec:from algebra},
$\rho_k$ can be concisely defined by
\[
\rho_k\,s_\lambda(u_1,\ldots,u_r) := s_{\lambda,k}(u_1,\ldots,u_r,u_{r+1})
\]
when $\lambda$ is a partition with $r$ parts, possibly including zero parts.
%, and $\lambda+1,k$ is the sequence $\lambda_1+1,\ldots,\lambda_r+1,k$.
Recall that, using the determinantal formula, this means
\[
(\rho_ks_\lambda)(u_1,\ldots,u_{r+1}) =
\frac{\det(u_i^{\lambda_j+r+1-j})_{i,j=1,\ldots,r+1}}{\det(u_i^{r+1-j})_{i,j=1,\ldots,r+1}}\]   
where $\lambda_{r+1} = k$.  Alternatively,
$\rho_ks_\lambda(u)$ equals $(-1)^\ell s_\mu(u,u_{r+1})$ if $\mu$ is a
partition containing $\lambda$ such that the skew shape
$\mu\setminus\lambda$ is a ribbon
%\footnote{The letter $\rho$ can be taken to stand for ``$\rho$ibbon'', or for ``$\rho$aising'' following \cite{fnr}.}
with $\ell+1$ non-empty rows whose leftmost box is in row
$r+1$% , where the sign is $+$, respectively $-$, if
% $\mu\setminus\lambda$ has an odd, respectively even, number of rows
;
and $\rho_ks_\lambda(u)=0$ if there is no such~$\mu$.
\begin{example}
  We see that $\rho_k 1 = 0$ for $0 < k < r$ and $\rho_r 1 = (-1)^r
  s_{(1^{r+1})}(u_1,\dots,u_{r+1})$. In general $\rho_0
  s_\lambda(u_1,\dots,u_r) = s_\lambda(u_1,\dots,u_r,u_{r+1})$.
\end{example}

We also collect these operators $\rho_k$ into a sum
\[\rho = \sum_{k=0}^n (-1)^k e_k(t)\, \rho_k: \ZZ[u,t^{\pm 1}]^{\Ss_r}\to \ZZ[u,u_{r+1},t^{\pm1}]^{\Ss_{r+1}}.\]

We will sometimes abuse notation and allow $\rho$
to denote the analogous operator on the ring of symmetric polynomials
in $r-1$ variables. The argument of $\rho$ makes clear
which operator is being referred to.

\begin{proposition}\label{prop:row of zeros}
  If $M$ has a presentation as in \eqref{stabilization}, then,
\begin{equation}\label{eq:row of zeros 1}
  \K(\rho( M)) = \rho\,\K(M).
\end{equation}
\end{proposition}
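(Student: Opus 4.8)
The plan is to reduce the statement about $K$-polynomials to the statement about Hilbert series already proved in Lemma~\ref{lem:row of zeros}, and then to check that the operator $\rho$ acting on $K$-polynomials is the unique operator compatible with that action on Hilbert series via the common denominator $\prod_{i,j}(1-u_it_j)$. First I would recall that, by definition of the $K$-polynomial in Section~\ref{ssec:K}, we have $\Hilb(M)=\K(M;u,t)\big/\prod_{i=1}^r\prod_{j=1}^n(1-u_it_j)$, and similarly $\Hilb(\rho(M))=\K(\rho(M);u,u_{r+1},t)\big/\prod_{i=1}^{r+1}\prod_{j=1}^n(1-u_it_j)$, the only change in the denominator being the extra factor $\prod_{j=1}^n(1-u_{r+1}t_j)$. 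So Lemma~\ref{lem:row of zeros} says precisely that
\[
\frac{\K(\rho(M);u,u_{r+1},t)}{\prod_{j=1}^n(1-u_{r+1}t_j)}
=\sum_{\lambda\in\Par_r,\ \a} d_{\lambda,\a}\,s_\lambda(u_1,\dots,u_{r+1})\,t^\a ,
\]
where $\K(M;u,t)=\sum_{\lambda,\a}d_{\lambda,\a}\,s_\lambda(u_1,\dots,u_r)\,t^\a$ is the Schur expansion of the $K$-polynomial of $M$ (which exists and is finite since $\K(M)$ is a Laurent polynomial, hence a finite $\ZZ[t^{\pm1}]$-combination of Schur polynomials in $u_1,\dots,u_r$ by the basis statement at the end of Section~\ref{ssec:K}). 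Thus it remains to show that multiplying $\sum_{\lambda,\a}d_{\lambda,\a}s_\lambda(u_1,\dots,u_{r+1})t^\a$ by $\prod_{j=1}^n(1-u_{r+1}t_j)$ yields exactly $\rho\,\K(M;u,t)$.

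The key computation is therefore the identity, for a single partition $\lambda$ with at most $r$ parts,
\[
\Bigl(\prod_{j=1}^n(1-u_{r+1}t_j)\Bigr)\,s_\lambda(u_1,\dots,u_{r+1})
=\sum_{k=0}^n(-1)^k e_k(t)\,u_{r+1}^k\,s_\lambda(u_1,\dots,u_{r+1}),
\]
and then to recognize $u_{r+1}^k s_\lambda(u_1,\dots,u_{r+1})$ in terms of the operators $\rho_k$. Expanding $\prod_{j=1}^n(1-u_{r+1}t_j)=\sum_{k=0}^n(-1)^k e_k(t_1,\dots,t_n)u_{r+1}^k$ is immediate. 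For the factor $u_{r+1}^k s_\lambda(u_1,\dots,u_{r+1})$ I would use the bialternant (Jacobi–Trudi determinantal) formula recalled in the paper: since $s_\lambda(u_1,\dots,u_{r+1})=\det(u_i^{\lambda_j+r+1-j})_{i,j}/\det(u_i^{r+1-j})_{i,j}$ with $\lambda$ padded to $r+1$ parts by a final $0$, multiplying by $u_{r+1}^k$ is the same as multiplying the last row of the numerator matrix by $u_{r+1}^k$ — but that is not yet a Schur polynomial because the exponent vector $(\lambda_1+r,\dots,\lambda_r+1,k)$ need not be strictly decreasing. Straightening this determinant (sorting the exponents, picking up a sign from the permutation, and discarding the case where two exponents coincide) is exactly the combinatorial content encoded in the ribbon description of $\rho_k s_\lambda$ given just before the statement: $\rho_k s_\lambda(u)=(-1)^\ell s_\mu(u,u_{r+1})$ when $\mu\setminus\lambda$ is a ribbon of $\ell+1$ rows with leftmost box in row $r+1$, and $0$ otherwise. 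Using the extended notation $s_{\lambda,k}$ from Section~\ref{sec:from algebra}, the identity $u_{r+1}^k s_\lambda(u_1,\dots,u_{r+1}) = s_{(\lambda_1,\dots,\lambda_r,k)}(u_1,\dots,u_{r+1})=\rho_k s_\lambda(u_1,\dots,u_r)$ is essentially a definitional unwinding of the bialternant formula, so this step is routine once set up carefully.

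Putting the pieces together: starting from Lemma~\ref{lem:row of zeros}, multiply both sides by $\prod_{j=1}^n(1-u_{r+1}t_j)$; on the left we get $\K(\rho(M);u,u_{r+1},t)$ by definition of the $K$-polynomial; on the right, distributing the product over the (finite) Schur expansion and applying the identity $(\prod_j(1-u_{r+1}t_j))\,s_\lambda(u_1,\dots,u_{r+1})=\sum_{k=0}^n(-1)^k e_k(t)\,\rho_k s_\lambda(u_1,\dots,u_r)$ term by term, we obtain $\sum_{\lambda,\a}d_{\lambda,\a}\sum_k(-1)^k e_k(t)\,(\rho_k s_\lambda)(u)\,t^\a=\sum_k(-1)^k e_k(t)\,\rho_k\bigl(\sum_{\lambda,\a}d_{\lambda,\a}s_\lambda(u)t^\a\bigr)=\rho\,\K(M;u,t)$, using that each $\rho_k$ acts $\ZZ[t^{\pm1}]$-linearly and $\rho=\sum_k(-1)^k e_k(t)\rho_k$ by definition. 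This is the claimed equation \eqref{eq:row of zeros 1}. The main obstacle is purely the bookkeeping in the middle step — verifying the single-Schur identity for $\rho_k$ and making sure the determinantal straightening matches the ribbon sign convention exactly (in particular that the $\rho_k s_\lambda=0$ cases, where two row-exponents coincide, correspond precisely to the absence of a valid ribbon $\mu$) — but this is a finite, self-contained symmetric-function computation with no conceptual difficulty, given the machinery already in place.
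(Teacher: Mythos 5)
The reduction you propose---multiply both sides of Lemma~\ref{lem:row of zeros} by $\prod_{j}(1-u_{r+1}t_j)$ and then recognize the result as $\rho\,\K(M)$---rests on the claimed identity
\[
u_{r+1}^k\,s_\lambda(u_1,\dots,u_{r+1}) \;=\; \rho_k\,s_\lambda(u_1,\dots,u_r) \;=\; s_{\lambda,k}(u_1,\dots,u_{r+1}),
\]
and this is false. Multiplying $s_\lambda(u_1,\dots,u_{r+1})$ by $u_{r+1}^k$ does indeed amount to scaling the last \emph{row} of the numerator Vandermonde-type determinant by $u_{r+1}^k$, but that operation changes \emph{every} entry of that row (the exponents become $\lambda_j+r+1-j+k$ for all $j$), whereas $s_{\lambda,k}$ is defined by changing the exponent in only the last \emph{column} (from $0$ to $k$). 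These are different determinants. A two-variable check already kills it: with $r=1$, $\lambda=(0)$, $k=1$ one has $u_2\cdot 1 = u_2$, yet $\rho_1\,1 = s_{(0,1)}(u_1,u_2)=0$; with $k=2$ one has $u_2^2 \neq \rho_2\,1 = -u_1u_2$. Consequently the purported intermediate identity $\bigl(\prod_j(1-u_{r+1}t_j)\bigr)s_\lambda(u_1,\dots,u_{r+1}) = \sum_k(-1)^ke_k(t)\rho_ks_\lambda(u)$ is false as well, and the chain of equalities in your final paragraph does not hold. Concretely, for $M=R_1$ with $n=2$ your formula gives $(1-u_2t_1)(1-u_2t_2)$ for $\K(\rho(R_1))$, while the true answer (and what $\rho\,\K(R_1)$ yields) is $\K(R_2/J_2)=1-u_1u_2t_1t_2$.

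There is a second, independent slip earlier in the argument: you assert that $\K(M;u,t)=\sum_{\lambda,\a}d_{\lambda,\a}s_\lambda(u)t^\a$ with the \emph{same} coefficients $d_{\lambda,\a}$ that appear in the Schur expansion of $\Hilb(M)$ in Lemma~\ref{lem:row of zeros}. Since $\K(M)=\Hilb(M)\prod_{i\le r,j\le n}(1-u_it_j)$, the Schur expansions of $\Hilb(M)$ and of $\K(M)$ have different coefficients, so this identification is not available. The actual proof cannot be a mere term-by-term multiplication of Lemma~\ref{lem:row of zeros} by $\prod_j(1-u_{r+1}t_j)$; it requires a genuine symmetrization in all $r+1$ variables $u_1,\dots,u_{r+1}$. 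In the paper this is done by expanding the numerator determinant defining $\rho_k s_\lambda$ along the entries involving $k$, summing the geometric series in $k$ to produce factors $\prod_j(1-u_\ell t_j)$ for \emph{each} $\ell=1,\dots,r+1$, and then using the Vandermonde factorization to re-recognize the $\ell$-sum as a Schur expansion. That Lagrange-interpolation-style step has no analogue in your proposal, and it is where the real content lies.
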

In particular, if $X$ is a closed subvariety of $\AA^{r\times n}$,
then $\K(\rho(X)) = \rho\,\K(X)$.

\begin{proof}
  Consider the following sum, corresponding to a single Schur polynomial
  in $\K(M)$
  within the right side of~\eqref{eq:row of zeros 1}:
\[\sum_{k=0}^n e_k(-t) \rho_ks_\lambda(u_1,\ldots,u_r).\]
We expand along the last row the numerator in our determinantal
definition of $\rho_k$, corresponding to the introduced
$\lambda_{r+1}=k$.  This turns the displayed sum above into
\[
\sum_{k=0}^ne_k(-t)\frac{\displaystyle \sum_{\ell=1}^{r+1}
  (-1)^{\ell-1} u_\ell^{k}
  \det(u_i^{\lambda_j+r+1-j})_{i=1,\ldots,\widehat\ell,\ldots,r+1}^{j=1,\ldots,r}}
{\det(u_i^{r+1-j})_{i,j=1}^{r+1}}.\] Moving the inner summation
outside, we may rewrite the sum over $k$ as a product, yielding
\begin{align*}
&\mathrel{\phantom{=}}
\sum_{\ell=1}^{r+1}
\prod_{j=1}^n (1-u_\ell t_j) \frac{\displaystyle (-1)^{\ell-1}
\det(u_i^{\lambda_j+r+1-j})_{i=1,\ldots,\widehat\ell,\ldots,r+1}^{j=1,\ldots,r}}
{\det(u_i^{r+1-j})_{i,j=1}^{r+1}}
\\&= 
\sum_{\ell=1}^{r+1}
\prod_{j=1}^n (1-u_\ell t_j)
\frac{(\prod_{i\neq\ell} u_i)\, (-1)^{\ell-1}
\det(u_i^{\lambda_j+r-j})_{i=1,\ldots,\widehat\ell,\ldots,r+1}^{j=1,\ldots,r}}
{\prod_{k<i} (u_k-u_i)}
\\&= 
\sum_{\ell=1}^{r+1}
\prod_{j=1}^n (1-u_\ell t_j)
\frac{(\prod_{i\neq\ell} u_i)\,
s_\lambda(u_1,\ldots,\widehat{u_\ell},\ldots,u_{r+1})}
{\prod_{i\neq\ell}(u_\ell-u_i)}
\end{align*}
%where $\lambda+1$ is an \emph{ad hoc} notation for the partition on $r$ parts
%such that $(\lambda+1)_i = \lambda_i+1$, 
using the Vandermonde identity.
Since $\lambda$ only appears in this expression in a single $s_\lambda$,
in which the rest of the expression is linear, and since the $\rho$ are
linear in the $t$ variables as well, the whole right side of~\eqref{eq:row of zeros 1}
equals
\begin{align*}
&\mathrel{\phantom{=}}
\sum_{\ell=1}^{r+1}
\prod_{j=1}^n (1-u_\ell t_j)
\frac{(\prod_{i\neq\ell} u_i)\,
\Hilb(M;u_1,\ldots,\widehat{u_\ell},\ldots,u_{r+1}, t)
\prod_{i\neq\ell}\prod_j (1-u_i t_j)}
{\prod_{i\neq\ell}(u_\ell-u_i)}
\\&= \left(\prod_{i=1}^{r+1}\prod_{j=1}^n (1-u_it_j)\right)
\sum_{\ell=1}^{r+1}
\frac{(\prod_{i\neq\ell} u_i)\,
\Hilb(M;u_1,\ldots,\widehat{u_\ell},\ldots,u_{r+1}, t)}
{\prod_{i\neq\ell}(u_\ell-u_i)},
\end{align*}
where we have also used the definition of the $K$-polynomial.

%Now, the coefficient of a monomial $t^\mathbf b$ in the double product
%$\prod_{i\neq\ell}\prod_j (1-u_i t_j)$ is $\prod_j
%h_{b_j}(u_1,\ldots,\widehat{u_\ell},\ldots,u_{r+1})$.
%So, using the expansion of $\Hilb(M)$ in~\eqref{eq:row of zeros
%  expansion}, the coefficient of $t^\mathbf a$ in the right side
%of~\eqref{eq:row of zeros 1} is
%\begin{multline*}
%\sum_{\mathbf b,\lambda}
%\sum_{\ell=1}^{r+1}
%\prod_{j=1}^n 
%(h_{b_j}(u_1,\ldots,u_{r+1})-u_\ell h_{b_j-1}(u_1,\ldots,u_{r+1})) \\ \cdot
%\frac{(\prod_{i\neq\ell} u_i)\,
%d_{\lambda,\mathbf a-\mathbf b}s_\lambda(u_1,\ldots,\widehat{u_\ell},\ldots,u_{r+1})}
%{\prod_{i\neq\ell}(u_\ell-u_i)},
%\end{multline*}
%The monomials of
%$h_{b_j}(u_1,\ldots,u_{r+1})-u_\ell h_{b_j-1}(u_1 ,\ldots,u_{r+1} )$
%are just the degree $b_j$ monomials in the variables
%$u_1 ,\ldots,u_{r+1} $, except for those that use
%$u_\ell $; that is, this function equals
%$h_{b_j}(u_1 ,\ldots,\widehat{u_\ell },\ldots,u_{r+1} )$.
%So the above coefficient is
%\[
%\sum_{\mathbf b,\lambda}
%\sum_{\ell=1}^{r+1}
%\frac{(\prod_{i\neq\ell} u_i )\,
%d_{\lambda,\mathbf a-\mathbf b}\cdot
%(s_\lambda\prod_j h_{b_j})(u_1 ,\ldots,\widehat{u_\ell },\ldots,u_{r+1} )}
%{\prod_{i\neq\ell}(u_\ell -u_i )}.
%\]
By the same determinantal manipulations used above,
expanding along the last row, this is equal to
\[
\left(\prod_{i=1}^{r+1}\prod_{j=1}^n (1-u_it_j)\right)
\sum_{\lambda,\mathbf a} d_{\lambda,\mathbf a}s_\lambda(u_1,\ldots,u_r,u_{r+1})t^{\mathbf a}
\]
if $\Hilb(M)$ is given the expansion in coefficients
from~\eqref{eq:row of zeros expansion}.
By Lemma~\ref{lem:row of zeros}, this is
\[\left(\prod_{i=1}^{r+1}\prod_{j=1}^n (1-u_it_j)\right)
\Hilb( \rho(M); u_1,\dots,u_{r+1},t),\]
which equals $\K(\rho(M))$.  This proves \eqref{eq:row of zeros 1}.
\end{proof}

\section{Direct sum, parallel extension and
  $K$-polynomials}\label{sec:matroid constructions}
In this section we consider some common matroid operations as
operations on vector configurations, and see how these manifest
themselves at the level of $K$-classes. 
\subsection{Direct sum}\label{sec:direct sum} In order to understand
how direct sums interact with $K$-classes we first consider a
concatenation operation.
\begin{proposition}\label{prop:direct sum} Suppose that $v^1 \in
\AA^{r \times n_1}$ has its last $r'$ rows equal to zero and $v^2 \in
\AA^{r \times n_2}$ has its first $r-r'$ rows equal to zero. Let $v =
(v^1,v^2)$ be the concatenation of $v^1$ and $v^2$. Then, the
$K$-polynomial of $v$ is the product of the $K$-polynomials of $v^1$
and $v^2$.
\end{proposition}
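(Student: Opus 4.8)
The plan is to identify the orbit closure $X_{(u,v)}$ with the product variety $X_u \times X_v$ inside $\AA^{r\times n_1}\times\AA^{r\times n_2} = \AA^{r\times(n_1+n_2)}$, and then to read off the $K$-polynomial identity from the multiplicativity of Hilbert series over a product, together with the observation that the denominators $\prod(1-u_it_j)$ appearing in the three $K$-polynomials combine correctly. The one substantive point is the identification of $X_{(u,v)}$ with $X_u\times X_v$; everything afterward is bookkeeping with multigradings.

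For that identification, the inclusion $X_{(u,v)}\subseteq X_u\times X_v$ is immediate: since $u$ is supported on rows $1,\dots,r-r'$ and $v$ on rows $r-r'+1,\dots,r$, the matrix $(u,v)$ is block diagonal, so $g\,(u,v)\,t^{-1} = (gut_1^{-1},\,gvt_2^{-1})$ for $t=(t_1,t_2)$, and hence the orbit $X_{(u,v)}^\circ$ already lies in the closed set $X_u\times X_v$. For the reverse inclusion I would use Theorem~\ref{thm:dependent tensors}: for $J = J_1\sqcup J_2$ with $J_1\subseteq[n_1]$ and $J_2$ in the remaining columns, the block-diagonal form of $(u,v)$ gives $(u,v)_J = u_{J_1}\oplus v_{J_2}$, so one may take $(u,v)_J^\perp$ block diagonal with column blocks $u_{J_1}^\perp$ (supported on $J_1$) and $v_{J_2}^\perp$ (supported on $J_2$); consequently, for a matrix $w = (w',w'')$ the tensor family of Theorem~\ref{thm:dependent tensors} attached to $(w,J)$ splits into two subfamilies lying in complementary tensor subspaces, one involving only $(w',J_1)$ and one only $(w'',J_2)$, and the whole family is dependent as soon as one subfamily is. Applying Theorem~\ref{thm:dependent tensors} to $X_u$ and to $X_v$ then shows that if $w'\in X_u$ and $w''\in X_v$ every such family is dependent, so $(w',w'')\in X_{(u,v)}$. (This is essentially the content of the reduction to connected matroids in the proof of Theorem~\ref{thm:dependent tensors}.)

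Now write $R = R'\otimes_\kk R''$ with $R' = \kk[x_{ij}:i\in[r],\,j\le n_1]$ and $R'' = \kk[x_{ij}:i\in[r],\,n_1<j\le n_1+n_2]$, and let $I_u\subset R'$, $I_v\subset R''$ be the prime ideals of $X_u$, $X_v$. The product scheme $X_u\times X_v$ has coordinate ring $(R'/I_u)\otimes_\kk(R''/I_v) = R/(I_uR+I_vR)$. Since $\kk$ has characteristic zero, hence is perfect, and $R'/I_u$, $R''/I_v$ are integral domains, this tensor product is reduced; as its spectrum has the same underlying set as the irreducible variety $X_{(u,v)}$, it must be the reduced induced subscheme structure, so $I_uR+I_vR = I_{(u,v)}$ and $R/I_{(u,v)}\cong(R'/I_u)\otimes_\kk(R''/I_v)$ as $\ZZ^r\times\ZZ^{n_1+n_2}$-graded rings (the $\ZZ^r$-degrees adding across the tensor product). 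Hilbert series are multiplicative over such a tensor product — the $u$-monomials multiply and the $t$-variables are on disjoint index sets — so $\Hilb(R/I_{(u,v)}) = \Hilb(R'/I_u)\cdot\Hilb(R''/I_v)$. Substituting $\Hilb = \K/\prod(1-u_it_j)$ for each of the three rings and cancelling, using
\[
\Bigl(\prod_{i\in[r],\,j\le n_1}(1-u_it_j)\Bigr)\Bigl(\prod_{i\in[r],\,n_1<j\le n_1+n_2}(1-u_it_j)\Bigr) = \prod_{i\in[r],\,j\le n_1+n_2}(1-u_it_j),
\]
yields $\K(X_{(u,v)}) = \K(X_u)\,\K(X_v)$, as desired. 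The main obstacle, as noted, is the scheme-theoretic identification $X_{(u,v)} = X_u\times X_v$; once Theorem~\ref{thm:dependent tensors} is in hand this is short, and the rest is formal.
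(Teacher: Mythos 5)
Your proof is correct, and it reaches the key identification $X_{(u,v)} = X_u \times X_v$ by a genuinely different route than the paper. The paper handles the reverse inclusion $X_u^\circ \times X_v^\circ \subseteq X_{(u,v)}$ by a direct orbit calculation: for $(g_1ut_1,\,g_2vt_2)$, since $g_1 u$ sees only the columns of $g_1$ matching the support rows of $u$ (and symmetrically for $g_2 v$), one can splice the relevant columns of $g_1$ and $g_2$ into a single $g$ with $g(u,v)\mathrm{diag}(t_1,t_2)=(g_1ut_1,\,g_2vt_2)$, and then take closures. You instead verify the set-theoretic equations of Theorem~\ref{thm:dependent tensors} directly, using the block-diagonal structure of $(u,v)_J^\perp$ for $J=J_1\sqcup J_2$; this invokes heavier machinery from Section~3.3 but has the advantage of sidestepping any worry about whether the spliced matrix $g$ is invertible (the paper's argument really needs either a genericity observation or the stability of $X_{(u,v)}$ under the $\End(\kk^r)$ action, which it doesn't quite spell out). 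You are also more careful than the paper at the step from the set equality to the Hilbert series equality: the paper silently treats $X_u\times X_v$ as a variety, whereas you explicitly cite the reducedness of the tensor product of domains over a perfect field to justify $I_{(u,v)} = I_u R + I_v R$. That extra care is worthwhile, since Hilbert series depend on the ideal and not just the underlying set.

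One minor quibble: the phrase \emph{the whole family is dependent as soon as one subfamily is} glosses over the degenerate cases where a subfamily is empty or consists of zero tensors (handled by the conventions of Remark~\ref{rmk:ideal generators}), but this is easily repaired and does not affect the correctness of the argument.
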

Combining this result with Proposition~\ref{prop:row of zeros} allows
us to see how the direct sum of vector configurations manifests itself
at the level of $K$-polynomials.
\begin{corollary}\label{cor:direct sum}
  Let $v^1$ and $v^2$ be vector configurations in $\AA^{r_1 \times
    n_1}$ and $\AA^{r_2 \times n_2}$, and $v^1 \oplus v^2$ their
  direct sum in $\AA^{r \times n}$. Then,
  \[
  \K(X_{v^1 \oplus v^2}) = \rho^{r_2}
    \K(X_{v^1})\cdot \rho^{r_1}
    \K(X_{v^2})
  \]
\end{corollary}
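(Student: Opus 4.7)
The plan is to combine Proposition~\ref{prop:direct sum} with an iterated application of Proposition~\ref{prop:row of zeros}. Setting $r = r_1 + r_2$ and $n = n_1 + n_2$, I would first pad each configuration so that it has $r$ rows: form $\tilde u \in \AA^{r \times n_1}$ from $u$ by appending $r_2$ rows of zeros at the bottom, and $\tilde v \in \AA^{r \times n_2}$ from $v$ by prepending $r_1$ rows of zeros at the top. With this choice, the concatenation $(\tilde u, \tilde v) \in \AA^{r \times n}$ coincides with the block matrix $u \oplus v$, and $\tilde u$ has its last $r_2$ rows zero while $\tilde v$ has its first $r_1$ rows zero --- exactly the hypothesis of Proposition~\ref{prop:direct sum}. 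Applying that result yields
\[
\K(X_{u \oplus v}) \;=\; \K(X_{\tilde u})\cdot \K(X_{\tilde v}),
\]
with the understanding that the $t$-variables of the two factors land in disjoint blocks $t_1,\ldots,t_{n_1}$ and $t_{n_1+1},\ldots,t_n$.

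The next step is to recognize each of $\K(X_{\tilde u})$ and $\K(X_{\tilde v})$ as an iterated stabilization. By Lemma~\ref{lem:priming sheaf priming support}, augmenting a matrix by a single row of zeros replaces its orbit closure by the stabilization $\rho$ of the original. Iterating $r_2$ times shows that $X_{\tilde u}$ is the $r_2$-fold stabilization of $X_u$, and then Proposition~\ref{prop:row of zeros} gives $\K(X_{\tilde u}) = \rho^{r_2}\K(X_u)$. For $\tilde v$ the zero rows are prepended rather than appended, but this is harmless: the two paddings differ by a permutation of rows, so they lie in a common $\GL_r(\kk)$-orbit and produce the same orbit closure in $\AA^{r \times n_2}$. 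Therefore $\K(X_{\tilde v}) = \rho^{r_1}\K(X_v)$ as well.

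Substituting these two identifications into the product formula above produces the stated identity. There is no real obstacle here: the corollary is essentially a bookkeeping consequence of the concatenation formula paired with the stabilization rule, and the only care needed is in tracking which $u$-variables the two $\rho$ operators introduce in each factor (both end up in the full set $u_1,\ldots,u_r$, with the resulting expressions symmetric in them) and in noting that top-vs.-bottom padding is immaterial at the level of orbit closures.
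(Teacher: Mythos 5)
Your proposal is correct and is essentially the argument the paper has in mind (the paper only says ``combining this result with Proposition~\ref{prop:row of zeros}\,\ldots'' without spelling out the steps). You identify $u\oplus v$ as the concatenation of bottom-padded $\tilde u$ and top-padded $\tilde v$, invoke Proposition~\ref{prop:direct sum} to factor $\K(X_{u\oplus v})$, and then use Lemma~\ref{lem:priming sheaf priming support} together with Proposition~\ref{prop:row of zeros} (iterated) to identify each factor as an iterated $\rho$ of $\K(X_u)$ and $\K(X_v)$, noting correctly that top- versus bottom-padding is immaterial because it is a $\GL_r(\kk)$-translate — this is precisely what is needed, with no gaps.
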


\begin{proof}[Proof of Proposition~\ref{prop:direct sum}] 
There is a projection of $\AA^{r \times n}$ onto the
first $n_1$ columns and the last $n_2$ columns. The orbit of $v$ is
the intersection of the pullbacks of the orbits of $v^1$ and $v^2$
under these projections. It follows that the ideal of the orbit of $v^1 \oplus v^2$
is the sum of the inclusions of the ideals of $v^1$ and $v^2$ into $R$.
Denote these ideals and their inclusion by $I_{v^1}$ and
$I_{v^2}$.  Since these ideals are in different variables
we conclude that
  \[ \Hilb(R/I_{v^1 \oplus v^2}) = \Hilb(R/(I_{v^1} + I_{v^2})) = \Hilb(R/I_{v^1}
\otimes R/I_{v^2})
  \] and hence that the $K$-polynomial of $R/I_v$ is
  \[ \Hilb(R/I_{v^1}) \Hilb(R/I_{v^2}) \prod_{i=1}^r\prod_{j=1}^n
(1-u_i t_j)
  \] The $K$-polynomial of $R/I_{v^1}$ is
  \[\K(R_1/I_{v^1})\prod_{i=1}^r \prod_{j={n_1+1}}^n (1-u_i t_j),\]
where $R_1$ is the coordinate ring of $\AA^{r \times n_1}$. Similarly,
the $K$-polynomial of $R/I_{v^2}$ is
  \[\K(R_2/I_{v^2};u,t) \prod_{i=1}^r \prod_{j=1}^{n_1} (1-u_i t_j),\]
and from this the result follows.
\end{proof}

\subsection{Parallel extension}\label{ssec:parallel extension}
Here we are concerned with the effect on the $K$-polynomial of 
duplicating a column of $v\in\AA^{r\times(n-1)}$, which corresponds to
a parallel extension of the underlying matroid.  
In view of Section~\ref{sec:direct sum}, it is just as 
informative to compare the matrix with duplicated column 
to a matrix of the same size with one of the duplicated columns 
replaced by zero.  This gives the next theorem a
particularly nice form.

Let $\delta_{n-1}$ be the $(n-1)$th Demazure operator on the $t$ variables,
given by
\[\delta_{n-1}(f) = \frac{t_{n-1}f - t_n\sigma_{n-1}f}{t_{n-1}-t_n}\]
for $f\in\ZZ[t_1,\ldots,t_n,u_1,\ldots,u_r]$, where
$\sigma_{n-1}\in\Ss_n$ is the transposition $(n\!-\!1\ n)$, \linebreak
and $\Ss_n$ acts by permuting the $t$ variables.

\begin{theorem}\label{thm:parallel}
  Suppose that the last two columns of $v^\|\in\AA^{r\times n}$ 
  are nonzero and equal, and $v\in\AA^{r\times n}$ is obtained from $v^\|$
  by changing the last column to zero.  Then 
  \[\K(X_{ v^\|}) = \delta_{n-1}\K(X_v)\]
  %\[\K(R/I_{v^\|}) = \delta_{n-1}\K(R/I_v).\]
\end{theorem}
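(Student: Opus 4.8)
The plan is to relate the coordinate rings (equivalently, the $K$-polynomials) of $X_{v^\|}$ and $X_v$ via a geometric comparison of the two orbit closures inside $\AA^{r\times n}$, and then to translate this comparison into the Demazure operator $\delta_{n-1}$. First I would observe that there is a natural one-parameter family interpolating between $v^\|$ and $v$: rescale the last column of $v^\|$ by a parameter, so that at generic values one gets a matrix projectively equivalent to $v^\|$, while the limit (as the scaling goes to a value forcing the last column into the span of earlier columns, or more simply, the $T$-limit sending that column to zero) lands in $X_v$. In the language of Section~\ref{sec:pointsOfOrbitClosures}, $X_v$ arises from $X_{v^\|}$ by projecting along a flag that separates the $n$th column, so $X_v\subset X_{v^\|}$ as a $G$-stable subvariety. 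The heart of the argument is to identify $X_{v^\|}$ with a ``Bott--Samelson-style'' or Demazure-type resolution built from $X_v$ by sweeping the last column through the $\Ss_n$-orbit generated by $\sigma_{n-1}=(n-1\ n)$: concretely, because columns $n-1$ and $n$ of $v^\|$ are equal, the subgroup of $T\rtimes\langle\sigma_{n-1}\rangle$ (or rather the minimal parabolic in $\GL_n$ corresponding to the simple transposition $\sigma_{n-1}$) acts on $X_v$ and its orbit closure is exactly $X_{v^\|}$.

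Concretely, I would argue as follows. Let $P_{n-1}\subset\GL_n(\kk)$ be the minimal parabolic associated to the simple reflection $\sigma_{n-1}$; it contains $B$ and its Levi has $\GL_2$ in the $(n-1,n)$ block. The matrix $v^\|$, having equal $(n-1)$st and $n$th columns, is fixed (up to the diagonal torus action) by the corresponding $\GL_2$, so $\GL_r(\kk)\,v^\|\,P_{n-1}$ makes sense and its closure is $X_{v^\|}$; meanwhile $v$ (with $n$th column zero) is a limit point, and $X_v=\cl{\GL_r(\kk)\,v\,T}$. One checks $\cl{\GL_r(\kk)\,v\,P_{n-1}}=X_{v^\|}$ by a direct computation: acting on $v$ by an element of the unipotent radical of $P_{n-1}$ (which adds a multiple of column $n-1$ to column $n$) produces matrices whose $n$th column is an arbitrary multiple of $v_{n-1}=v^\|_{n-1}$, and taking closure recovers all of $X_{v^\|}^\circ$. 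Now the effect on $K$-polynomials of passing from a $B$-stable (here $\GL_r\times B$-stable, using that $X_v$ is not literally $B$-stable but we can replace $v$ by a suitable translate) subvariety to the closure of its $P_{n-1}$-orbit is precisely the Demazure operator $\delta_{n-1}$: this is the standard fact that for a $B$-variety $Z$ with $Z'=\cl{BZP_{n-1}}$ one has $[\mathcal O_{Z'}]=\delta_{n-1}[\mathcal O_Z]$ in $T$-equivariant $K$-theory, and it descends to $K$-polynomials because $\AA^{r\times n}=\Spec R$ and the Demazure operator on the weight lattice of the $n$-torus $T$ is exactly $\delta_{n-1}$ as written. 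One must verify that the divided-difference/Demazure formalism is compatible with the grading conventions of Section~\ref{sec:from algebra} (the $u$-versus-$t$ and sign issues), but since $\delta_{n-1}$ only touches $t$ variables and the $\GL_r$-action is untouched throughout, this is routine.

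The main obstacle I anticipate is making the identification $X_{v^\|}=\cl{\GL_r(\kk)\,v\,P_{n-1}}$ rigorous, including the subtlety that $X_v$ is a $\GL_r\times T$-orbit closure but not a priori stable under a Borel of $\GL_n$, so one needs to fix a torus-translate of $v$ making the relevant unipotent subgroup act and to check that the Demazure/pushforward formula applies in this equivariant setting (the cleanest route is via the projection $p':\GL_n(\kk)\to\GG_r(n)$ of Section~\ref{sec:main proof}, pushing the Schubert-style Demazure formula on the Grassmannian side through the section $s$ of Theorem~\ref{thm:section}). Indeed, since $X_{v^\|}$ and $X_v$ are both full rank (their $n$th columns being, respectively, nonzero and obtainable as a limit of nonzero), we have $\K(X_{v^\|})=s(\K(\cl{\pi(v^\|)T}))$ and $\K(X_v)=s(\K(\cl{\pi(v)T}))$, and $s$ commutes with $\delta_{n-1}$ because $s$ is $K^T_0(\pt)$-linear and $\delta_{n-1}$ is built from the $T$-action on the $t$-variables alone. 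So it suffices to prove the statement one level down, on $\GG_r(n)$: that $\cl{\pi(v^\|)T}=\cl{\pi(v)\,P_{n-1}}$ and hence $\K(\cl{\pi(v^\|)T})=\delta_{n-1}\K(\cl{\pi(v)T})$ in $K^T_0(\GG_r(n))$. This last statement is the clean, well-known Demazure formula for torus orbit closures in a Grassmannian under the action of a minimal parabolic, and the geometric input $\cl{\pi(v^\|)T}=\cl{\pi(v)\,P_{n-1}}$ reduces to the elementary observation that $P_{n-1}$-orbits of $\pi(v)$ sweep out exactly the one-parameter family of subspaces obtained by moving $v_n$ from $0$ to a generic multiple of $v_{n-1}$.
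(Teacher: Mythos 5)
Your proposal is substantially the paper's own argument: identify a Borel $B$ containing $G=\GL_r(\kk)\times T$ under which $X_v$ is invariant (this works because $v_n=0$, so the lower-triangular unipotent adding a multiple of column $n$ to column $n-1$ fixes $X_v$), show the corresponding minimal parabolic $P$ sweeps $X_v$ onto $X_{v^\|}$, and apply a Demazure-type pushforward formula. The paper does exactly this, taking $B$ and $P$ inside $\GL_r(\kk)\times(\kk^\times)^{n-2}\times\GL_2(\kk)$.

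Where you diverge is in two respects. First, you also sketch a route through $\GG_r(n)$ and the section $s$ of Theorem~\ref{thm:section}, arguing that $s$ intertwines $\delta_{n-1}$. That route works, but your stated reason (``$s$ is $K^T_0(\pt)$-linear'') is insufficient: $\delta_{n-1}$ involves the transposition $\sigma_{n-1}$, so one also needs that $s$ is $\Ss_n$-equivariant. This is true (e.g.\ by the uniqueness clause of Theorem~\ref{thm:section}, since $\sigma^{-1}s\sigma$ also satisfies (a) and (b)) but deserves to be said. Second, and more importantly, your argument leans on a ``standard fact'' that sweeping a $B$-invariant $Z$ by a minimal parabolic implements $\delta_{n-1}$ in $T$-equivariant $K$-theory. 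This is exactly the content of the paper's Lemma~\ref{lem:sweeping}, which is \emph{not} merely a citation: the reference \cite{knutson notes} proves the cohomological version, and adapting it to $K_0^T$ of a linear representation (with the correct form $\frac{[X]-\chi(r\cdot[X])}{1-\chi}$ and the localization check on $\PP^1$) is genuine work the paper carries out. One must also verify the degree-one hypothesis on $P\times^B X\to P\cdot X$, which you implicitly assume. So the skeleton of your proof is right and matches the paper, but the key technical lemma you invoke as standard is precisely what needs to be stated and proved.
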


This theorem comes quickly from a $\PP^1$-bundle construction like the
one used for Schubert varieties \cite{bgg,demazure}.  To extend the
parallelism of these two situations, Schubert varieties $\Omega_\lambda$ in
$G(r,n)$ are in bijection with Schubert matroids $M_\lambda$, in
such a way that a generic point of $\Omega_\lambda$ has matroid
$M_\lambda$.  If the indexing partition $\lambda$ of one Schubert
matroid satisfies $\lambda_1=n-r-1$, and
$\lambda'=(n-r,\lambda_2,\ldots,\lambda_r)$ is obtained from it by
adding a box, then $n$ is a loop in $M_\lambda$ and $M_{\lambda'}$ is
a parallel extension of $M_\lambda\setminus\{n\}$, while
$\K(X_{\lambda'})=\delta_{n-1}\K(X_\lambda)\in K^0(G(r,n))$.

The precise statement we will use is the following ``sweeping lemma''.
It is a $K$-theoretic analogue of the cohomological lemma \cite[Lemma~2.2.1]{knutson notes},
whose statement we have mimicked closely.
\begin{lemma}\label{lem:sweeping}
  Let $P\subseteq B\subseteq T$ be a triple of Lie groups such that $P/B\cong\PP^1$ and $T$
  is a torus acting with weight $\mu$ on $\mathfrak p/\mathfrak b$.  
  Let $r\in N_p(T)$ be an element of the normalizer, inducing an automorphism
  $r$ of $T^\ast$ such that $r\cdot\mu=-\mu$.
  
  Let $V$ be a $P$-representation and $X\subseteq V$ a $B$-invariant subvariety. Then
  in $K_0^T(V)$ we have the equality
  \[d[P\cdot X]=\frac{[X]-\chi (r\cdot[X])}{1-\chi}\]
  where $d$ is the degree of the map $P\mathbin{\times^B}X\to P\cdot X$ 
  (or $0$ if $Y$ is $P$-invariant).
\end{lemma}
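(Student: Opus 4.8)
The plan is to mimic the cohomological sweeping lemma \cite[Lemma~2.2.1]{knutson notes}: realise $P\cdot X$ as the image of a $\PP^1$--bundle over $X$ and push forward the structure sheaf of that bundle by localising on the fibre. Set $Y:=P\mathbin{\times^B}X$, the bundle associated to the principal $B$--bundle $P\to P/B$. Because $V$ is a representation of all of $P$, not merely of $B$, the map $[p,v]\mapsto(pB,\,p\cdot v)$ is a $T$--equivariant isomorphism $P\mathbin{\times^B}V\stackrel{\sim}{\to}(P/B)\times V$; under it $P/B$ becomes $\PP^1$ and $Y$ becomes the closed subvariety $\{(\ell,w)\in\PP^1\times V:\ w\in\ell\cdot X\}$, where $\ell\cdot X:=p\cdot X$ for any $p\in\ell$ (well defined because $X$ is $B$--invariant). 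A connected solvable group is special, so $P\to P/B$, and hence the first projection $\varpi\colon Y\to\PP^1$, is Zariski--locally trivial; thus $Y$ is a variety and $\varpi$ is flat with every fibre isomorphic to $X$. The second projection $\pi\colon Y\to V$ is the restriction of $\mathrm{pr}_V$, hence proper ($\PP^1$ being complete) and $T$--equivariant, with image the closed set $P\cdot X$; its fibre over $w\in P\cdot X$ is $\{\ell\in\PP^1:\ w\in\ell\cdot X\}$, a closed subscheme of $\PP^1$.

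Now localise on the $\PP^1$ fibre. The torus $T$ has two fixed points on $\PP^1=P/B$, the base point $o=eB$ and $o'=rB$; since $T$ acts on $T_o(P/B)=\mathfrak p/\mathfrak b$ with weight $\mu$ and $r\cdot\mu=-\mu$, the fixed tangent lines at $o$ and $o'$ carry reciprocal characters, and $\chi$ (the character occurring in the statement) is the one for which the $K$--theoretic push--pull formula for the trivial $\PP^1$--bundle $\mathrm{pr}_V$ reads
\[
\mathrm{pr}_{V*}(\alpha)=\frac{\sigma_o^*\alpha}{1-\chi}+\frac{\sigma_{o'}^*\alpha}{1-\chi^{-1}},\qquad \alpha\in K_0^T(\PP^1\times V),
\]
with $\sigma_o,\sigma_{o'}\colon V\hookrightarrow\PP^1\times V$ the constant sections. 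Apply this to $\alpha=[\OO_Y]$. Flatness of $\varpi$ makes $Y$ Tor--independent from the fibres $\{o\}\times V$ and $\{o'\}\times V$, so $\sigma_o^*[\OO_Y]$ and $\sigma_{o'}^*[\OO_Y]$ are the classes of the scheme--theoretic fibres $Y_o\cong X$ and $Y_{o'}\cong r\cdot X$, that is, $[X]$ and $r\cdot[X]$ (the image of $[X]$ under the automorphism of $K_0^T(V)$ induced by $r$; indeed $r$ acts on $V$ by a monomial substitution of coordinates, so the ideal of $r\cdot X$, and hence $\K(r\cdot X)$, is obtained from that of $X$ by that substitution). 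Since $1-\chi^{-1}=-\chi^{-1}(1-\chi)$, this collapses to
\[
\pi_*[\OO_Y]=\mathrm{pr}_{V*}[\OO_Y]=\frac{[X]-\chi\,(r\cdot[X])}{1-\chi},
\]
which lies in $K_0^T(V)$: division by $1-\chi$ is exact here, for the same reason that the Demazure operator $\delta_{n-1}$ of Theorem~\ref{thm:parallel} --- the instance of this divided difference relevant to the applications --- preserves polynomials.

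It remains to identify $\pi_*[\OO_Y]$ with $d\,[\OO_{P\cdot X}]$. The fibres of $\pi$ being closed subschemes of $\PP^1$, the higher direct images $R^{j}\pi_*\OO_Y$ vanish for $j>0$, so $\pi_*[\OO_Y]=[\pi_*\OO_Y]$; and $\pi_*\OO_Y$, a coherent sheaf on $P\cdot X$ that is free of rank $d$ on the open locus where $\pi$ is finite, has class $d\,[\OO_{P\cdot X}]$ plus a class supported on a proper closed subset of $P\cdot X$. The content of the lemma is that this last term is zero. If $X$ is already $P$--invariant the assertion is immediate: then $\pi$ is a $\PP^1$--bundle, $\pi_*\OO_Y=\OO_X$, and the right side above reduces to $[X]$ as well since $r$ fixes $X$. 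In the generically finite case --- in particular in the application $X=X_v$, $P\cdot X=X_{v^\|}$, $d=1$ --- one argues geometrically: $Y$ and $P\cdot X$ are Cohen--Macaulay (for the application by Corollary~\ref{cor:CM}), $\pi$ is birational, and a vanishing result for $R\pi_*\OO_Y$ forces $\pi_*\OO_Y=\OO_{P\cdot X}$, with no correction.

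The main obstacle is this last step. In the cohomological analogue the correction terms drop out for free, because cohomology is graded and they have the wrong codimension; $K$--theory lacks such a grading, so one must genuinely analyse the collapsing morphism $\pi\colon P\mathbin{\times^B}X\to P\cdot X$ --- the vanishing of its higher direct images and the triviality of the non--top--dimensional part of $\pi_*\OO_Y$ --- using concrete geometric properties of the orbit closures (their normality, or the Cohen--Macaulayness of Corollary~\ref{cor:CM}) rather than representation theory alone.
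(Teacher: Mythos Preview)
Your approach---realise $P\cdot X$ as the image of the $\PP^1$-bundle $Y=P\times^B X\subset\PP^1\times V$ and compute $\pi_*[\OO_Y]$ via the $K$-theoretic relation on the fibre---is precisely what the paper does; its entire argument is the sentence ``the proof goes through as in~\cite{knutson notes}, except that\ldots\ the relation that attains in $K_0^T(\PP^1)$ is $[\{0\}]-\mu[\{\infty\}]=1-\mu$.''  Your derivation of $\pi_*[\OO_Y]=\dfrac{[X]-\chi\,(r\cdot[X])}{1-\chi}$ from that relation is correct and more detailed than the paper's sketch.

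You are also right that the remaining step, identifying $\pi_*[\OO_Y]$ with $d\,[\OO_{P\cdot X}]$, is where the $K$-theoretic argument genuinely diverges from the cohomological one: in Chow theory $\pi_*[Y]=d\,[\pi(Y)]$ is the \emph{definition} of proper push-forward of cycles, whereas $\pi_*[\OO_Y]$ can differ from $d\,[\OO_{P\cdot X}]$ by a class supported on the non-finite locus of~$\pi$.  The paper does not address this point either.  (Your own analysis of the $P$-invariant case---both sides equal $[X]$---in fact contradicts the lemma's convention $d=0$, which is the correct convention in cohomology but not in $K$-theory; the printed statement has this and other evident typos.)  For the application in Theorem~\ref{thm:parallel} one has $d=1$, and your instinct to invoke Corollary~\ref{cor:CM} is sound, but ``CM source and target, birational, plus a vanishing result'' is still a sketch rather than a proof: to conclude $\pi_*\OO_Y=\OO_{X_{v^\|}}$ one needs in addition that $X_{v^\|}$ is normal (so that Zariski's main theorem applies) or has rational singularities.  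So the gap you flag is real, both in your write-up and in the paper's.
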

The proof goes through as in~\cite{knutson notes}, except that for a torus $T$
acting on $\PP^1$ via the weight $\mu$, the relation that attains
in $K_0^T(\PP^1)$ is
\[ [\{0\}] - \mu[\{\infty\}] = 1-\mu, \]
as can still be checked by equivariant localization.

\begin{proof}[Proof of Theorem~\ref{thm:parallel}]
  In any matrix $w\in X_{v^\|}$,
  the last two columns $w_{n-1}$ and $w_n$ are parallel. 
  Let $a\in\kk^n$ be one of $w_{n-1}$ and $w_n$ which is nonzero,
  if either is, or 0 if $w_{n-1}=w_n=0$.  Then 
  $(w_1,\ldots,w_{n-2},a,0)$ is in $X_v$, 
  and we can write $w=(w_1,\ldots,w_{n-2},y_1a,y_2a)$
  for some generically unique choice of $(y_1:y_2)\in\PP^1(\kk)$.  

  Let the variety $X\subseteq\AA^{r\times n}$ be $X_v$. 
  Since $v_n=0$, this orbit is in fact $B$-equivariant where
  \[B = \GL_r\times(\kk^\times)^{n-2}\times
    \left\{\begin{bmatrix}*&0\\{}*&*\end{bmatrix}\right\}  
    \supseteq \GL_r\times T=G\] 
  and the $\{\scriptsize\begin{bmatrix}*&0\\{}*&*\end{bmatrix}\}$
  factor acts on the last two columns.  
  Let $P=\GL_r\times (\kk^\times)^{n-2}\times\GL_2$, 
  so that as above $P\cdot X = X_{v^\|}$, and the map
  $P\mathbin{\times^B}X\to P\cdot X$ is degree~1.
  Take the $T$ of the lemma to be the maximal torus in $G$.
  Then $\mu=t_n/t_{n-1}$, and we will take $r$ to be 
  $(1,1,\scriptsize\begin{bmatrix}0&1\\1&0\end{bmatrix})$,
  whose action is the same as that of~$\sigma_{n-1}$.
  Then the theorem is immediate from Lemma~\ref{lem:sweeping}.
\end{proof}

We can now combine Proposition~\ref{prop:K-polynomial of U_2,n} and
Theorem~\ref{thm:parallel} to obtain an explicit formula for the
$K$-polynomial of an arbitrary $v \in \AA^{2 \times n}$. We will only
formulate our result for those $v$ of full rank with no columns equal
to zero.  If the $j$\/th column of $t$ is zero, 
then $\K(X_v)$ is simply the $K$-class
where this column is deleted multiplied by $(1-u_1t_j)(1-u_2t_j)$.

If $v \in \AA^{2 \times n}$ has no zero columns then we
define its \textbf{parallelism partition} to be the decreasing
sequence of sizes of its rank one flats.
\begin{proposition}\label{prop:K rank 2}
  Suppose that $v \in \AA^{2 \times n}$ has rank two and no zero
  columns. Write
  \[
  \K(X_v;u,t) = 
  \sum_{\substack{\b \in \{0,1\}^n\\0 \leq k \leq |\b|/2}} d_{k,\b}(v) s_{(|\b|-k,k)}(u) t^\b,
  \]
  as we may. Then,
  \begin{enumerate}
  \item $d_{0,\mathbf{0}}(v) = 1$, and $d_{0,\mathbf{b}}(v) = 0$ for all
    other $\b \in\{0,1\}^n$.
  \item If $k=1$ and $v_\b$ has rank one then
    $d_{k,\b}(v)=(-1)^{|\b|+1}$.
  \item If $k=1$ and the rank of $v_\b$ is two then $d_{k,\b}(v)=0$.
  \item If $k \geq 2$ and $v_\b$ has parallelism partition
    $\mu=(\mu_1 \geq \dots \geq \mu_\ell)$, $\ell \geq 4$ and
    $\mu_1'+\dots+\mu_{k-1}'\geq 2k-1$ then $d_{k,\b}(v) =
    (-1)^{|\b|+1}(\mu_1'+\dots + \mu_{k-1}' - 2k +1)$. Otherwise,
    $d_{k,\b}(v)=0$.
  \end{enumerate}
\end{proposition}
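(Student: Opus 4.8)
The plan is to reduce the computation to the uniform case, settled by Proposition~\ref{prop:K-polynomial of U_2,n}, by building an arbitrary rank-two configuration up from $U_{2,\ell}$ via parallel extensions. If $v\in\AA^{2\times n}$ has rank two, no zero columns, and $\ell$ rank-one flats, then $M(v)$ is obtained from $U_{2,\ell}$ by a sequence of parallel extensions; realizing these by duplicating columns, we get matrices $v=v^{(n)},v^{(n-1)},\dots,v^{(\ell)}$ with $v^{(m)}\in\AA^{2\times m}$, $M(v^{(\ell)})=U_{2,\ell}$, and each $v^{(m+1)}$ obtained from $v^{(m)}$ by appending a column equal to an existing one, reindexing at each stage so that the duplicated column is last. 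By Theorem~\ref{thm:parallel} (and the remark before Proposition~\ref{prop:K rank 2}, that appending a zero column multiplies the $K$-polynomial by $(1-u_1t_j)(1-u_2t_j)$), the operator passing from $v^{(m)}$ to $v^{(m+1)}$ is
\[
\K(X_{v^{(m+1)}}) = \delta_m\big[(1-u_1t_{m+1})(1-u_2t_{m+1})\,\K(X_{v^{(m)}})\big],
\]
where $\delta_m$ is the Demazure operator on the pair $t_m,t_{m+1}$. The proof is then an induction on $n-\ell$.

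For the base case, rewrite Proposition~\ref{prop:K-polynomial of U_2,n} in the coefficients $d_{k,\b}$ using $e_m(t)=\sum_{|\b|=m}t^\b$ and $s_{(a,b)}(1,1)=a-b+1$: this gives $d_{0,\mathbf{0}}=1$, $d_{k,\b}=(-1)^{|\b|+1}(|\b|-2k+1)$ for $2\le k\le|\b|/2$, and all remaining coefficients zero. For $v=U_{2,\ell}$ every subconfiguration $v_\b$ has parallelism partition $(1^{|\b|})$, whose conjugate has first part $|\b|$ and later parts zero, so the formula claimed in the statement reduces to exactly this; the base case also establishes the normal form asserted in the proposition (squarefree $t$-monomials, two-row shapes, $|\lambda|=|\b|$).

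For the inductive step I would evaluate the displayed operator explicitly. Assuming $\K(X_{v^{(m)}})$ has the normal form, it is linear in $t_m$; write $\K(X_{v^{(m)}})=t_mQ+S$ with $Q,S$ free of $t_m,t_{m+1}$. Since $\delta_m$ equals multiplication by $t_m$ followed by the divided difference $\partial_m$, a short computation with the Leibniz rule for $\partial_m$ gives
\[
\K(X_{v^{(m+1)}}) = S\big(1-e_2(u)\,t_mt_{m+1}\big) + Q\big(t_m+t_{m+1}-e_1(u)\,t_mt_{m+1}\big),
\]
with $e_1(u)=u_1+u_2$, $e_2(u)=u_1u_2$. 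Expanding through the two-variable Pieri rules $s_{(a,b)}(u)e_1(u)=s_{(a+1,b)}(u)+s_{(a,b+1)}(u)$ and $s_{(a,b)}(u)e_2(u)=s_{(a+1,b+1)}(u)$, with the convention $s_{(a,a+1)}(u)=0$, this becomes an explicit recursion writing each $d_{k,\b}(v^{(m+1)})$ as a sum of boundedly many $d_{k',\b'}(v^{(m)})$ with $k'\in\{k-1,k\}$ and $\supp\b'\subseteq\supp\b$. The recursion preserves the normal form, and since every $\b'$ that occurs is supported inside $\supp\b$, it shows by induction that $d_{k,\b}(v)$ depends only on $M(v_\b)$, hence only on the parallelism partition of $v_\b$.

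It remains to check that the closed form of the statement solves this recursion with the above initial data. This comes down to elementary bookkeeping about how the parallelism partition $\mu$ of $v_\b$, and the partial sums $\mu_1'+\dots+\mu_{k-1}'=\sum_i\min(\mu_i,k-1)$ of its conjugate, behave when a parallel element is adjoined (one part of $\mu$ grows by one, after re-sorting) and $\b$ is enlarged by none, one, or two of the two new coordinates. The main obstacle I expect is precisely this verification: it is a finite but somewhat delicate case analysis, in which the vanishing forced by the recursion --- via the $k\mapsto k+1$ shifts and the disappearance of the Pieri term $s_{(a,a+1)}(u)=0$ --- has to be kept in step with the threshold conditions $\ell\ge4$ and $\mu_1'+\dots+\mu_{k-1}'\ge2k-1$, and the $k=1$ cases have to be matched against whether $v_\b$ has rank one or two. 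The sign $(-1)^{|\b|+1}$ is carried through automatically.
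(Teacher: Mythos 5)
Your proposal follows essentially the same route as the paper's own proof: induct on the number of duplicated columns, starting from the uniform base case supplied by Proposition~\ref{prop:K-polynomial of U_2,n}, and step via Theorem~\ref{thm:parallel} in the form $\K(X_{v'})=\delta_n\bigl((1-u_1t_{n+1})(1-u_2t_{n+1})\K(X_v)\bigr)$, then split $\K(X_v)$ into the parts with and without $t_n$ and apply the rank-two Pieri rule. Your derived recursion matches the paper's \eqref{rank2recur}/\eqref{rank2recur2}, and like the paper you leave the final verification — that the stated closed form solves the recursion, tracking the changes to the parallelism partition under adjunction of a parallel element — as a finite case check; your extra remark that $d_{k,\b}(v)$ depends only on $M(v_\b)$ is a mild streamlining but does not change the argument's structure.
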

\begin{proof}
  % We will proceed by doing parallel extensions to the case when $v$
  % has a uniform matroid, where the result follows from
  % Proposition~\ref{prop:K-polynomial of U_2,n}. 
  We express Theorem~\ref{thm:parallel} in the following way: If $v'
  \in \AA^{2 \times (n+1)}$ is obtained by duplicating the last column
  of $v \in \AA^{2 \times n}$ then
  \[
  K(\cl{\GL_2 v' T^{n+1}}) = \delta_n \left( K(\cl{\GL_2 v T^n})(1-u_1t_{n+1})(1-u_2 t_{n+1}) \right).
  \]
  Temporarily write $K(v)$ and $K(v')$ for the $K$-polynomials of the
  orbit closures of $v$ and $v'$.

  Since, by induction, $K(v)$ is square free in the $t$-variables, we
  may uniquely write $K(v)= K(v)_0 + K(v)_1 t_n$, where $t_n$ does not
  appear in $K(v)_0$.  A simple computation yields
  \begin{align*}
    \delta_n (1-u_1 t_{n+1})(1-u_2 t_{n+1}) 
    &= 1- u_1u_2 t_n t_{n+1},\\
    \delta_n t_n(1-u_1t_{n+1})(1-u_2 t_{n+1})
    &= t_n + t_{n+1} - (u_1+u_2)t_n t_{n+1},
  \end{align*}
  and it follows that
  \begin{multline}\label{rank2recur}
      K(v') = K (v)_0 - s_{(1,1)}(u) t_n t_{n+1}K(v)_0\\
  + K(v)_1 t_n + K(v)_1 t_{n+1} - s_{(1)}(u) t_n t_{n+1}K(v)_1.
  \end{multline}
  We conclude that $K(v')$ is square free in the $t$-variables, does
  not contain any Schur polynomials of partitions of length $1$, and
  the coefficient of any $t^\b$ in $K(v')$ that does not contain {\it
    both} $t_n$ and $t_{n+1}$ is as described in the proposition.

  Suppose that $t^\b = t^\a t_n t_{n+1}$, $k \geq 1$, and write
  $(\a,1)$ for the exponent vector of $t^\a t_n$. Then
  \eqref{rank2recur} implies that
  \[
  d_{k,\b}(v')
  = -\left( d_{k-1,\a}(v) 
    + d_{k-1,(\a,1)}(v) + d_{k,(\a,1)}(v)
  \right).
  \]
  Since $d_{k,(\a,1)}(v) = d_{k,(\a,1)}(v')$, by our computation
  above, this yields
  \begin{align}\label{rank2recur2}
  d_{k,\b}(v') + d_{k,(\a,1)}(v') = - \left( d_{k-1,\a}(v) 
    + d_{k-1,(\a,1)}(v) \right).
  \end{align}
  What follows from here is a tedious check that the coefficients
  described in the proposition obey \eqref{rank2recur2}. To preserve
  the reader's patience, we will not provide the details of all possible
  cases, which are many. This will be forgiven since the case $k=1$ is
  addressed in much greater generality by Theorem~\ref{thm:hooks in
    K}.
  
  We will focus on the least degenerate case, when $k \geq 3$ and
  $d_{k-1,\a}(v)$, $d_{k-1,(\a,1)}(v)$ and $d_{k,(\a,1)}(v) =
  d_{k,(\a,1)}(v')$ are all non-zero. In this case, induction yields
  \begin{align*}
    d_{k-1,\a}(v) &= (-1)^{|\a|+1} \left(\mu_1'(\a) + \dots +\mu_{k-2}'(\a) - 2k+3\right),\\
    d_{k-1,(\a,1)}(v) &= (-1)^{|\a|+2}\left( \mu_1'(\a,1) + \dots + \mu_{k-2}'(\a,1) -2k +3 \right),\\
    d_{k,(\a,1)}(v') &= (-1)^{|\a|+2}\left( \mu_1'(\a,1) + \dots + \mu_{k-2}'(\a,1)+\mu_{k-1}'(\a,1) -2k +1 \right).
  \end{align*}
  Here, $\mu(\a)$ and $\mu(\a,1)$ are the parallelism partitions of
  $v_\a$ and $v_{(\a,1)}$. Now, $\mu(\a)$ and $\mu(\a,1)$ differ in
  exactly one position. Hence, the sum of the first two terms is
  $(-1)^{|\a|+2}$, unless the number of vectors parallel to $v_n$ in
  $v_\a$ is $k-1$ or larger. If the latter happens then the
  $d_{k-1,\a}(v) + d_{k-1,(\a,1)}(v)=0$. This implies that
  $d_{k,\b}(v')$  differs from $d_{k,(\a,1)}(v')$ by $\pm 1$ or $0$,
  according to whether the number of vectors parallel to $v_n$ in
  $v_{(\a,1)}$ is larger than $k-1$, or not. Hence $d_{k,\b}(v')$ is
  given by the given by the formula
  \[
  \mu_1'(\b) + \dots + \mu_{k-1}'(\b) - 2k +1,
  \]
  where $\mu(\b)$ is the parallelism partition of $v_\b$. The
  remaining cases are left to the reader.
\end{proof}

\section{The tensor module}\label{sec:tensor module}
The \textbf{tensor module} of $v \in \AA^{r \times n}$ is the cyclic
$\GL_r(\kk)$-module in $(\kk^r)^{\otimes n}$ generated by
\[
v_1 \otimes \dots \otimes v_n.
\]
We denote the tensor module of $v$ by $G(v)$. In this section we
consider the connection between the tensor module, the $K$-class, and
the matroid of $v$.

\subsection{$\OO(1,\dots,1)$ and the tensor module}\label{ssec:OO(1,...,1)}
Let $(\AA^{r \times n})^{\rm nz}$ denote the space of matrices in
$\AA^{r \times n}$ with no column equal to zero. This is a principal
$T$-bundle over the product of projective spaces $(\PP^{r-1})^n$. We
let $j: (\AA^{r \times n})^{\rm nz} \to \AA^{r \times n}$ denote the
inclusion, and $p:(\AA^{r \times n})^{\rm nz} \to (\PP^{r-1})^n$ the
projection. The tensor module $G(v)$ can be constructed from the line
bundle $\OO(1,\dots,1)$ on $(\PP^{r-1})^n$, which is the external
tensor product of the $\OO(1)$'s on each factor.

The inverse image $j^{-1} X_v$ is the intersection of
$X_v$ with $(\AA^{r \times n})^{{\rm nz}}$ and the
projection of this to $(\PP^{r-1})^n$ is the $\GL_r(\kk)$-orbit closure of
$p(v)$.
\begin{proposition}\label{prop:character of tensor module}
  For $v \in (\AA^{r \times n})^{{\rm nz}}$, $G(v)$ is dual as a
  $\GL_r(\kk)$-module to the global sections of $\OO(1,\dots,1)|_{\cl{\GL_r(\kk)
      p(v)}}$.  The character of $G(v)$, as a $\GL_r(\kk)$-module, is the
  coefficient of $t_1\cdots t_n$ in $\Hilb(X_v)$.
\end{proposition}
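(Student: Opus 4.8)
The plan is to rerun the machinery behind Theorem~\ref{thm:section}, replacing $\pi:(\AA^{r\times n})^{\rm fr}\to\GG_r(n)$ by $p:(\AA^{r\times n})^{\rm nz}\to(\PP^{r-1})^n$ and $i$ by $j$. Set $Z=\cl{\GL_r(\kk)\,p(v)}$. Since $X_v^{\rm nz}:=j^{-1}X_v=X_v\cap(\AA^{r\times n})^{\rm nz}$ is $T$-stable and, as noted before the proposition, $p(X_v^{\rm nz})=Z$, one has $X_v^{\rm nz}=p^{-1}(Z)$; thus $p$ restricts to a principal $T$-bundle $X_v^{\rm nz}\to Z$, and $\OO(1,\dots,1)|_Z$ is the $\GL_r(\kk)$-equivariant line bundle on $Z$ associated, via \eqref{eq:torsor}, to the character $(1,\dots,1)$ of $T$ acting on the trivial bundle over $X_v^{\rm nz}$. (If $r=1$ the proposition is trivial, since $X_v$ is then a coordinate line; assume $r\ge2$.)

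The technical heart is a pushforward lemma parallel to the one in Section~\ref{sec:main proof}: for every $G$-equivariant coherent sheaf $\mathcal F$ on $(\AA^{r\times n})^{\rm nz}$, $R^qj_*\mathcal F=0$ for $q>0$, $j_*\mathcal F$ is $G$-equivariant coherent on $\AA^{r\times n}$, and $j_*\OO_{X_v^{\rm nz}}=\OO_{X_v}$. I would prove it exactly as there, using the auxiliary space $(\GL_r(\kk))^n$ and the two maps $(\GL_r(\kk))^n\to(\AA^{r\times n})^{\rm nz}$ and $(\GL_r(\kk))^n\to\AA^{r\times n}$ sending $(g_1,\dots,g_n)$ to the matrix with columns $g_1e_1,\dots,g_ne_1$: the first is a principal bundle, so its pushforward is an equivalence of equivariant categories and $\mathcal F$ is such a pushforward, while both maps are affine morphisms, so the Grothendieck spectral sequence forces $R^qj_*\mathcal F=0$; coherence of $j_*\mathcal F$ comes from a generating surjection; and $j_*\OO_{X_v^{\rm nz}}=\OO_{X_v}$ follows from Algebraic Hartogs' Lemma, applicable because $\AA^{r\times n}\setminus(\AA^{r\times n})^{\rm nz}$ has codimension $r\ge2$ and $\AA^{r\times n}$ is normal --- the verbatim argument used for Theorem~\ref{thm:section}(3). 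Taking global sections and tracking the $T$-action from the associated-bundle twist, this yields $H^0(X_v^{\rm nz},\OO)=R/I_v$ as $\ZZ^r\times\ZZ^n$-graded $G$-modules, hence $H^0(Z,\OO(1,\dots,1)|_Z)\cong (R/I_v)_{(\ast,(1,\dots,1))}$ as $\GL_r(\kk)$-modules.

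It then remains to identify $(R/I_v)_{(\ast,(1,\dots,1))}$ with $G(v)^\vee$ and to do the Hilbert-series bookkeeping. The degree-$(1,\dots,1)$ part of $R$ is the space of polynomial functions on $\AA^{r\times n}$ linear in each column, canonically $((\kk^r)^{\otimes n})^\vee$ as a $\GL_r(\kk)$-module; such a function vanishes on $X_v$ iff it vanishes on the dense orbit $X_v^\circ=\GL_r(\kk)vT$, and --- being multilinear, so that the $T$-scaling is harmless --- iff it annihilates $gv_1\otimes\dots\otimes gv_n$ for all $g$, i.e.\ iff it lies in the annihilator $G(v)^\perp$, the span of $\{gv_1\otimes\dots\otimes gv_n\}$ being exactly $G(v)$. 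Hence $(R/I_v)_{(\ast,(1,\dots,1))}\cong((\kk^r)^{\otimes n})^\vee/G(v)^\perp\cong G(v)^\vee$, which is the asserted duality. For the character, the coefficient of $t_1\cdots t_n$ in $\Hilb(X_v)$ is $\sum_{\a}\dim(R/I_v)_{(\a,(1,\dots,1))}\,u^{\a}$; by the convention of Section~\ref{sec:from algebra} that a polynomial of $\ZZ^r$-degree $\a$ has $\GL_r(\kk)$-weight $-\a$, this is the character of $(R/I_v)_{(\ast,(1,\dots,1))}\cong G(v)^\vee$ with the $u$'s inverted, and since $\mathrm{char}\,G(v)^\vee(u)=\mathrm{char}\,G(v)(u^{-1})$ the two inversions cancel, leaving $\mathrm{char}\,G(v)$.

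The main obstacle is the pushforward lemma of the second paragraph: one must verify that $(\GL_r(\kk))^n$ with the column maps genuinely plays the role that $\GL_n(\kk)$ played for the Grassmannian --- principal bundle over $(\AA^{r\times n})^{\rm nz}$, affine over both spaces --- so that the vanishing $R^qj_*=0$ and the structure-sheaf identity $j_*\OO_{X_v^{\rm nz}}=\OO_{X_v}$ carry over unchanged. Once that is in place, everything else is the elementary computation with spans of $\GL_r(\kk)$-orbits together with the grading conventions already fixed in Section~\ref{sec:from algebra}.
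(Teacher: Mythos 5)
Your argument is correct, and its elementary core — identifying $(R/I_v)_{(\ast,(1,\dots,1))}$ with $G(v)^\vee$ via ``a multilinear form annihilates $gv_1\otimes\cdots\otimes gv_n$ for all $g$ iff it vanishes on $X_v^\circ$ iff it vanishes on $X_v$,'' and then inverting characters twice — is exactly the substance of the paper's two-sentence proof, just spelled out. What is genuinely different is the pushforward machinery in your second paragraph. The paper asserts without argument that $H^0(Z,\OO(1,\ldots,1)|_Z)$ coincides with the space of restrictions of multilinear polynomials to $Z$, i.e.\ that $(R/I_v)\to H^0(X_v^{\rm nz},\OO)$ is surjective (note this cannot be deduced by a Hartogs argument on $X_v$ itself, since $X_v\setminus X_v^{\rm nz}$ can have codimension~1 in~$X_v$). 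Your route via the auxiliary space $(\GL_r(\kk))^n$, the Grothendieck spectral sequence, and Algebraic Hartogs on the ambient affine space $\AA^{r\times n}$ (where the complement $\bigcup_i\{x_i=0\}$ has codimension $r\geq2$) does establish this cleanly, so you are proving something the paper glides over. One caution on the point you flag as the ``main obstacle'': the right $P_1^n$-action and the right $T$-action on $(\GL_r(\kk))^n$ do not commute, so the equivariant equivalence should be organised with the full parabolic $\tilde P^n\supset P_1^n$ (stabilizer of the lines $\kk e_1$), with $T=\tilde P^n/P_1^n$, mirroring how the paper uses $P\supset H$ for $\GL_n(\kk)\to\GG_r(n)$ rather than working with $H$ and $T$ independently; with that fix your lemma goes through and everything else is as you say.
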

\begin{proof}
  The dual of $G(v)$ consists of those multilinear polynomials defined
  on $\GL_r(\kk) p(v)$. This proves the first claim.  The second follows
  since the character of $\OO(1,\dots,1)|_{\cl{\GL_r(\kk) p(v)}}$ is
  obtained from the coefficient in $\ZZ[u_1 ,\ldots,u_r ]$ of $t_1
  \cdots t_n$ in the multigraded Hilbert series of $X_v$ by
  replacing $u$ with $1/u$. Taking the dual representation at the
  level of characters corresponds to replacing each $u_i$ with
  $1/u_i$. The second claim follows.
\end{proof}

We will occasionally need to use facts about the tensor module of $v$
and all of its parallel extensions. As such, we set up the notation
for this now.  Given $\b \in \NN^n$, we let $v_\b$ denote the vector
configuration obtained from $v$ by duplicating the $i$th column $\b_i$
times (or omitting it if $\b_i = 0$). Let $v_\b^\otimes$ be tensor
product of the vectors in the configuration $v_\b$, in order, and
define $G(v_\b)$ to be the cyclic $\GL_r(\kk)$-module in $(\kk^r)^{\otimes
  |\b|}$ generated by $v_\b^\otimes$. The obvious generalization of
Proposition~\ref{prop:character of tensor module} is true for $v_\b$.
\begin{proposition}\label{prop:character of tensor module2}
  The character of $G(v_\b)$ is the coefficient of $t^\b$ in $\Hilb(X_v)$.
\end{proposition}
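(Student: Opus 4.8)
The plan is to reduce the statement for $v_\b$ to the already-established statement for $v$ (Proposition~\ref{prop:character of tensor module}), via the parallel-extension machinery of Section~\ref{ssec:parallel extension}. First I would observe that $v_\b$ is obtained from $v$ by a sequence of parallel extensions, one for each extra copy of a column, followed (when some $\b_i=0$) by deleting columns; since deleting a zero-weight column of the Hilbert series simply extracts the appropriate coefficient, we may assume all $\b_i\geq 1$ and that $v_\b$ has no repeated columns beyond those prescribed. Applying Proposition~\ref{prop:character of tensor module} directly to $v_\b$ — which is legitimate as soon as $v_\b\in(\AA^{r\times|\b|})^{\rm nz}$, i.e.\ $v$ itself has no zero column — the character of $G(v_\b)$ is the coefficient of $t_1\cdots t_{|\b|}$ in $\Hilb(X_{v_\b})$, where the $t$-variables are now indexed by the columns of $v_\b$.

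The remaining task is to identify that coefficient with the coefficient of $t^\b$ in $\Hilb(X_v)$. Here I would invoke Theorem~\ref{thm:parallel}: when $v^\|$ is obtained from $v$ by duplicating the last column, $\K(X_{v^\|})=\delta_{n-1}\K(X_v)$, and correspondingly $\Hilb(X_{v^\|})$ is obtained from $\Hilb(X_v)$ by the analogous Demazure-type operation on the $t$-variables (one divides by the product $\prod_{i,j}(1-u_it_j)$ on both sides). The key combinatorial fact is that the coefficient of the \emph{squarefree} top monomial $t_1\cdots t_nt_{n+1}$ in the image equals the coefficient of $t_1\cdots t_n$ (i.e.\ of $t^\b$ with $\b$ the all-ones vector on the original columns, now with the last entry incremented) in the original — this is exactly the content of the recursion extracted in the proof of Proposition~\ref{prop:K rank 2}, specialized to reading off how a squarefree monomial on the new column set corresponds to a monomial with a repeated index on the old one. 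Iterating this over all the duplications assembles the claim: the coefficient of $t_1\cdots t_{|\b|}$ in $\Hilb(X_{v_\b})$ is the coefficient of $\prod_i t_i^{\b_i}=t^\b$ in $\Hilb(X_v)$.

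The main obstacle is bookkeeping the index relabelling cleanly: after a parallel extension the ``new'' column gets a fresh $t$-variable, and one must track that a squarefree monomial in the extended variable set pulls back to the monomial $t^\b$ in the original variables, with the multiplicities $\b_i$ accumulating correctly under repeated extensions. Once this correspondence is set up carefully — most transparently by noting that $\Hilb(X_{v_\b})$ is literally $\Hilb$ of the orbit closure for the configuration with columns duplicated, so that restricting attention to the coefficient of the product of all its $t$-variables is the same as taking the $t^\b$-coefficient of the original after the Demazure operators have done their work — the identification is immediate and the proposition follows.
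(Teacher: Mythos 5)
The paper itself offers no written-out proof here: it treats the claim as the ``obvious generalization'' of Proposition~\ref{prop:character of tensor module}, and the intended argument is direct. The dual $G(v_\b)^\vee$ is the space of restrictions to $X_v^\circ$ of polynomial functions on $\AA^{r\times n}$ that are homogeneous of degree $\b_j$ in the $j$th column, i.e.\ the degree-$\b$ graded piece of $R/I_v$; taking $\GL_r(\kk)$-characters and inverting the $u$-variables then identifies the character of $G(v_\b)$ with the coefficient of $t^\b$ in $\Hilb(X_v)$, exactly as in the proof of Proposition~\ref{prop:character of tensor module}. No parallel-extension machinery is needed.

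Your proposal takes a genuinely different and considerably more roundabout route: reduce to the squarefree case via Proposition~\ref{prop:character of tensor module} applied to $v_\b\in\AA^{r\times|\b|}$, then transport coefficients back to $\Hilb(X_v)$ across a chain of parallel extensions using Theorem~\ref{thm:parallel}. The essential step in this plan --- that after duplicating a column, the squarefree coefficient of the new Hilbert series equals the appropriately incremented coefficient of the old one --- is merely asserted, and your citation of ``the recursion extracted in the proof of Proposition~\ref{prop:K rank 2}'' does not supply it: that computation is specific to rank~$2$ and concerns a different recurrence for $K$-polynomial coefficients, not the coefficient-matching fact you need. To close your argument you would have to prove two lemmas: first, that $\Hilb(X_{v^\|}) = \delta_{n-1}\Hilb(X_v)$, using that $\prod_{i,j}(1-u_it_j)$ is fixed by $\sigma_{n-1}$ so that $\delta_{n-1}$ commutes with dividing $\K$ into $\Hilb$, and that appending a zero column to $v$ leaves $\Hilb$ unchanged; second, the elementary fact that for $f$ independent of $t_n$ the coefficient of $t_{n-1}^{a}t_n^{b}$ in $\delta_{n-1}(f)$ equals the coefficient of $t_{n-1}^{a+b}$ in $f$, which follows from $\delta_{n-1}(t_{n-1}^k)=t_{n-1}^k+t_{n-1}^{k-1}t_n+\cdots+t_n^k$. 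With these two lemmas the induction does close, so your approach is repairable; but as written there is a gap at the central step, and the route imports the substantial geometry behind Theorem~\ref{thm:parallel} (the $\PP^1$-sweeping lemma) to prove something the paper gets essentially for free from the graded structure of $R/I_v$.
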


\subsection{Support of the tensor module}\label{ssec:support}
The \textbf{support} of the tensor module is the collection of
partitions of $n$ that index the irreducible representations appearing
in the irreducible decomposition of $G(v)$.  The \textbf{rank partition} of a
matroid $M$ is the sequence $\lambda(M(v)) =(\lambda_1,\lambda_2,\lambda_3,\dots)$
determined by the condition that
\[
\lambda_1 + \lambda_2 + \dots + \lambda_k
\]
is the size of the largest union of $k$ independent sets in $M$.
\begin{theorem}[Dias da Silva \cite{dds}]
  The rank partition of $M$ is a partition. If $M$ has no loops then there is
  a set partition of the ground set of $M$ into independent sets of
  sizes $\mu = (\mu_1 \geq \mu_2 \geq \dots \geq \mu_\ell)
  \vdash n$ if and only if $\mu \leq \lambda(M)$ in dominance order.
\end{theorem}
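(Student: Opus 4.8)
The plan is to derive this from the \emph{matroid union theorem} of Nash--Williams and Edmonds (see, e.g., \cite{white}). Write $f(k)$ for the size of a largest union of $k$ independent sets of $M$, so that by definition $\lambda(M)=(f(1),\,f(2)-f(1),\,f(3)-f(2),\,\dots)$. The input I need is that the $k$-fold union $M\vee\cdots\vee M$, whose independent sets are the unions $I_1\cup\cdots\cup I_k$ of independent sets of $M$, is itself a matroid with rank function $S\mapsto\min_{T\subseteq S}(|S\setminus T|+k\,\rk_M(T))$; in particular
\[
f(k)=\min_{T\subseteq E}\,(|E\setminus T|+k\,\rk_M(T)).
\]

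For the first assertion I would observe that, for each fixed $T\subseteq E$, the map $k\mapsto|E\setminus T|+k\,\rk_M(T)$ is affine in $k$ with non-negative slope, so $f$ --- the pointwise minimum of these finitely many affine functions --- is concave and non-decreasing on $\{0,1,2,\dots\}$, with $f(0)=0$. Concavity is precisely the assertion that the differences $\lambda_k=f(k)-f(k-1)$ are non-increasing, while monotonicity gives $\lambda_k\geq 0$; hence $\lambda(M)$ is a partition. If moreover $M$ has no loops, then for $k$ large the minimum defining $f(k)$ is attained at $T=\emptyset$ (any $T$ with $\rk_M(T)\geq 1$ contributes a value that grows without bound in $k$), so $f(k)=|E|=n$ for $k\gg 0$, and therefore $\lambda(M)\vdash n$.

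For the second assertion, fix a loop-free $M$ and a partition $\mu=(\mu_1\geq\cdots\geq\mu_\ell)\vdash n$. Since $\sum_j\mu_j=n=|E|$, a partition $E=I_1\sqcup\cdots\sqcup I_\ell$ into independent sets of $M$ has $|I_j|=\mu_j$ for every $j$ if and only if $|I_j|\leq\mu_j$ for every $j$. Next I would apply the matroid union theorem to the truncations $M^{(\mu_1)},\dots,M^{(\mu_\ell)}$, where $M^{(c)}$ is the matroid whose independent sets are the independent sets of $M$ of size at most $c$, so that $\rk_{M^{(c)}}(T)=\min(c,\rk_M(T))$. Such a partition exists precisely when the union of these truncations has rank equal to $|E|$, that is, precisely when
\begin{equation}
|T|\ \leq\ \sum_{j=1}^{\ell}\min(\mu_j,\,\rk_M(T))\qquad\text{for every }T\subseteq E.\tag{$\star$}
\end{equation}

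It then remains to show that $(\star)$ is equivalent to $\mu\leq\lambda(M)$ in the dominance order; since $\sum_j\mu_j=n=\sum_j\lambda(M)_j$, the latter says exactly that $\mu_1+\cdots+\mu_k\leq f(k)$ for all $k\geq1$. In the forward direction, given $T\subseteq E$ I set $\rho=\rk_M(T)$ and $a=\#\{j:\mu_j>\rho\}$; then $\sum_j\min(\mu_j,\rho)=a\rho+n-(\mu_1+\cdots+\mu_a)$, and the bound $\mu_1+\cdots+\mu_a\leq f(a)\leq|E\setminus T|+a\rho$ rearranges to $(\star)$. Conversely, from $(\star)$ together with $\sum_j\min(\mu_j,\rho)=n-\sum_j(\mu_j-\rho)^+$ one obtains $|E\setminus T|\geq\sum_j(\mu_j-\rho)^+\geq\mu_1+\cdots+\mu_k-k\rho$ for each $k$, hence $\mu_1+\cdots+\mu_k\leq|E\setminus T|+k\,\rk_M(T)$, and minimizing over $T$ gives $\mu_1+\cdots+\mu_k\leq f(k)$. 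I do not expect a genuine obstacle once the matroid union theorem is invoked: the only place calling for care is the choice of the cutoff $a$ in the forward direction and keeping the two quantifiers (over $k$ and over $T$) straight. A more hands-on route to the harder inclusion would instead prove that the set of realizable size-profiles is an order ideal in dominance, via a single-box exchange moving an element between two blocks, and then exhibit $\lambda(M)$ itself as realizable; but the truncation argument disposes of both halves uniformly and is cleaner.
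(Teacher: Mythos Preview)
The paper does not supply its own proof of this statement: it is quoted verbatim from Dias da Silva \cite{dds} and used as input to Proposition~\ref{prop:support}. So there is no paper proof to compare against; I can only assess your argument on its own merits.

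Your argument is correct. The matroid union formula $f(k)=\min_{T\subseteq E}(|E\setminus T|+k\,\rk_M(T))$ immediately gives concavity and monotonicity of $f$, hence that $\lambda(M)$ is a partition, and the loop-free hypothesis forces the minimum to be at $T=\emptyset$ for large $k$ so that $\lambda(M)\vdash n$. For the second assertion, applying matroid union to the truncations $M^{(\mu_1)},\dots,M^{(\mu_\ell)}$ is exactly the right move; the one step you pass over silently is that if $E$ is independent in the union matroid, i.e.\ $E=I_1\cup\cdots\cup I_\ell$ with $I_j$ independent in $M^{(\mu_j)}$, then one may take the $I_j$ pairwise disjoint (just delete repeated elements from all but one $I_j$), after which $\sum_j|I_j|=n=\sum_j\mu_j$ and $|I_j|\leq\mu_j$ force $|I_j|=\mu_j$. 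The equivalence of $(\star)$ with dominance is handled cleanly in both directions; your choice of the cutoff $a=\#\{j:\mu_j>\rho\}$ in the forward direction and the chain $\sum_j(\mu_j-\rho)^+\geq\sum_{j\leq k}(\mu_j-\rho)$ in the converse are the right bookkeeping. This is the standard modern route to the result and is arguably more transparent than the original.
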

The following result is related to a generalization of Gamas's theorem
on the vanishing of symmetrized tensors (see \cite{berget,dds}).
\begin{proposition}\label{prop:support}
  The tensor module $G(v)$ has an irreducible submodule of lowest
  weight $\mu$ if and only if $\mu \geq
  \lambda(M(v))^t$. Further, $\mu \geq \lambda(M(v))^t$ if and only if
  there is a standard Young tableau of shape $\mu$ whose columns
  index independent sets of $M(v)$.
\end{proposition}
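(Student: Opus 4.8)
The plan is to connect the tensor module $G(v)$ with the representation-theoretic decomposition of $R$ described in Section~\ref{sec:stabilization}, and then translate the combinatorial condition on independent sets into a statement about when certain lowest weight vectors survive in the quotient defining $X_v$. Recall from Proposition~\ref{prop:character of tensor module} that the character of $G(v)$ is the coefficient of $t_1 \cdots t_n$ in $\Hilb(X_v)$; dualizing, this means an irreducible of lowest weight $\mu$ appears in $G(v)$ exactly when the graded piece $(R/I_v)_{(\ast, (1,\ldots,1))}$ contains a lowest weight vector of $\GL_r(\kk)$-weight $-\mu$, equivalently a $G$-lowest weight vector of weight $\mu^t$ in the notation where bitableaux have columns of height $\mu^t_i$. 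Since $R$ decomposes as $\bigoplus_\lambda \SS^\lambda(\kk^r)^\vee \otimes \SS^\lambda(\kk^n)$ and we are extracting the multilinear ($t_1\cdots t_n$) part, the relevant lowest weight vectors of $R$ are indexed by standard Young tableaux $\tau : \lambda \to [n]$ with each entry appearing once, i.e.\ standard fillings; the vector is the product over columns of $\tau$ of the maximal minor of $x$ on the corresponding rows and columns.

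First I would make precise which of these bitableau lowest weight vectors vanish on $X_v$. A column of $\tau$ of height $h$ contributes the $h\times h$ minor of $x$ on rows $1,\ldots,h$ and on the columns given by that column's entries; evaluated at a point $w \in X_v^\circ$ (hence on $X_v$ by continuity), this minor is (up to the $\GL_r$ action, which only scales) the determinant of the corresponding columns of $v$. Therefore the whole lowest weight vector vanishes identically on $X_v$ precisely when \emph{some} column of $\tau$ indexes a dependent set of $M(v)$. So the lowest weight vectors of weight $\mu^t$ that do \emph{not} vanish on $X_v$ are exactly those standard tableaux of shape $\mu^t$ (equivalently, reading columns, standard tableaux of shape $\mu$ with the roles of rows and columns swapped) all of whose columns index independent sets of $M(v)$. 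This already gives the ``if'' direction of the second equivalence and, modulo linear independence of the surviving images, the existence of a standard tableau with independent columns is necessary and sufficient for $G(v)$ to contain lowest weight $\mu$.

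The second task is to show the image in $R/I_v$ of the surviving lowest weight vectors of a fixed weight $\mu^t$ are linearly independent (so that ``some survivor exists'' is equivalent to ``the multiplicity is nonzero''). Here I would use that the lowest weight vectors of a fixed weight in $R$ itself are linearly independent and span the lowest-weight space of the corresponding isotypic component, together with the fact that $I_v$ is spanned by $G$-subrepresentations; a $G$-subrepresentation either contains the entire lowest-weight line of an isotypic piece or misses it, so the quotient map is injective on the span of the bitableaux lowest weight vectors whose columns are all independent — the others are exactly the ones in $I_v$. Combining, $G(v)$ contains an irreducible of lowest weight $\mu$ iff a standard tableau of shape $\mu$ (columns read as subsets) has all columns independent in $M(v)$.

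Finally, to get the equivalence $\mu \geq \lambda(M(v))^t$ iff such a standard tableau exists, I would invoke Dias da Silva's theorem (quoted just above): $M(v)$ loop-free admits a partition of its ground set into independent sets of sizes $\nu$ iff $\nu \leq \lambda(M(v))$ in dominance. A standard tableau of shape $\mu$ whose columns are independent sets is exactly a partition of $[n]$ into independent sets whose sizes are the column lengths of $\mu$, i.e.\ the parts of $\mu^t$, arranged so each successive column fits — but the column-strictness of ``standard'' is automatic once the sets are pairwise disjoint and we may order within rows freely, so the existence reduces to partitioning $[n]$ into independent sets of sizes $\mu^t$, which by Dias da Silva holds iff $\mu^t \leq \lambda(M(v))$, i.e.\ iff $\mu \geq \lambda(M(v))^t$ (transposing reverses dominance). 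The handling of loops is routine: a loop lies in no independent set, so it cannot appear in any column, forcing $v$ (and hence $\lambda(M(v))$ with its trailing behavior) to be treated on the loop-free restriction; I expect this bookkeeping, rather than any conceptual point, to be the only real nuisance. \emph{The main obstacle} I anticipate is the linear-independence step — carefully arguing that passing to $R/I_v$ does not create unexpected dependencies among the surviving lowest weight vectors — which hinges on the structure of $I_v$ as a sum of isotypic components and must be stated cleanly.
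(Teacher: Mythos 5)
The paper states this proposition without a proof, referring the reader to \cite{berget,dds} (``a generalization of Gamas's theorem''). Your proposal attempts a direct bitableau proof, and I'll evaluate it on its own terms.

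Your first-equivalence argument (multiplicity of $\mu$ in $G(v)$ is positive iff some standard tableau of shape $\mu$ has all columns independent) is essentially correct, and you are over-worrying about the step you flag as the ``main obstacle.'' No linear-independence-in-the-quotient argument is needed: the standard bitableaux of the fixed weight $(\mu,\mathbf 1)$ form a basis of the lowest-weight space of that isotypic component of $R$, and the ones with some dependent column lie in $I_v$ by your Cauchy--Binet observation. If \emph{every} standard filling has a dependent column, the entire lowest-weight space is in $I_v$ and the multiplicity is zero; if some standard filling has all columns independent, that single bitableau is not in $I_v$ (its product of minors is a nonzero polynomial in $g$ when restricted to $X_v^\circ$, using only that $\kk$ is infinite) and the multiplicity is positive. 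That is the whole argument; no claim about injectivity on a span of survivors is needed.

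The genuine gap is in the second equivalence, precisely at the step you dismiss as ``automatic.'' A fixed partition of $[n]$ into disjoint independent sets of sizes $\mu^t$ need not be arrangeable into a standard tableau by any sorting or column permutation: for $\mu=(2,2)$ the partition $\{1,4\},\{2,3\}$ fills the shape so that either row $2$ reads $(4,3)$ or row $1$ reads $(2,1)$, and you cannot fix this while keeping the columns equal to those two sets. What is actually needed is that if \emph{some} partition into independent sets of sizes $\mu^t$ exists, then there is a possibly \emph{different} such partition whose sorted filling is a standard tableau --- a statement requiring a matroid exchange argument, not a triviality about ordering within rows. This is exactly the content carried by the references the paper cites for this proposition, and your proposal does not supply it; as written, the second equivalence reduces only to Dias da Silva via an unproven combinatorial claim.
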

As an immediate corollary we have the following result.
\begin{corollary}\label{cor:support}
  Given $\b \in \NN^n$, the coefficient of $s_\mu(u ) t^\b$ in the
  multigraded Hilbert series of $X_v$ is positive if and only if
  $\mu \geq \lambda(M(v_\b))^t$. The latter condition happens if and
  only if there is a semi-standard Young tableaux of shape $\mu$
  and content $\b$ whose columns index independent sets of $M(v)$.
\end{corollary}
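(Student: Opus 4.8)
The plan is to deduce the corollary from Proposition~\ref{prop:support} and Proposition~\ref{prop:character of tensor module2}, applied to the duplicated configuration $v_\b\in\AA^{r\times|\b|}$, followed by a combinatorial translation back to $M(v)$ and~$\b$.

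First I would treat the numerical statement. By Proposition~\ref{prop:character of tensor module2}, the coefficient of $t^\b$ in $\Hilb(X_v)$ is the character of the $\GL_r(\kk)$-module $G(v_\b)$. Since $\kk$ has characteristic zero, $G(v_\b)\subseteq(\kk^r)^{\otimes|\b|}$ is a semisimple polynomial representation, so its character expands with non-negative coefficients in the Schur basis $\{s_\mu(u)\}$, the coefficient of $s_\mu(u)$ being the multiplicity in $G(v_\b)$ of the irreducible submodule of lowest weight $\mu$ (in the bookkeeping of Section~\ref{sec:from algebra} and Proposition~\ref{prop:support}). Hence the coefficient of $s_\mu(u)\,t^\b$ in $\Hilb(X_v)$ is positive exactly when $G(v_\b)$ has such a submodule, which by Proposition~\ref{prop:support} applied to $v_\b$ holds if and only if $\mu\geq\lambda(M(v_\b))^t$. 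This is the first assertion; the degenerate case in which a zero column $v_i$ has $\b_i>0$ is consistent on both sides, since then $G(v_\b)=0$ while $M(v_\b)$ has a loop and $\mu\geq\lambda(M(v_\b))^t$ fails for cardinality reasons.

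For the tableau reformulation, Proposition~\ref{prop:support} applied to $v_\b$ already gives that $\mu\geq\lambda(M(v_\b))^t$ if and only if there is a standard Young tableau of shape $\mu$ whose columns index independent sets of $M(v_\b)$, so it remains to match these with semistandard tableaux of shape $\mu$ and content $\b$ whose columns index independent sets of $M(v)$. The ground set of $M(v_\b)$ is the set of pairs $(i,k)$ with $i\in[n]$ and $1\leq k\leq\b_i$, and a subset is independent in $M(v_\b)$ precisely when its elements have distinct first coordinates forming an independent set of $M(v)$, because the copies of a column are parallel and hence pairwise dependent. Forgetting second coordinates therefore turns a standard tableau with independent columns into a filling of $\mu$ of content $\b$ that is column-strict and weakly increasing along rows --- a semistandard tableau with the required property. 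Conversely, in a semistandard tableau the boxes carrying a fixed value $i$ form a horizontal strip, so one may assign them the copy indices $1,\dots,\b_i$ compatibly with the left-to-right order within each row, and inserting these second coordinates (with the ground set of $M(v_\b)$ ordered lexicographically) yields a standard Young tableau with independent columns. The only real work is checking that this lift is genuinely \emph{standard} --- rows and columns strictly increasing for the lexicographic order --- which is routine: columns increase strictly because their first coordinates already do, and rows because the copy indices increase along each maximal run of equal values. Beyond this bookkeeping I expect no obstacle; the substance is carried entirely by Propositions~\ref{prop:character of tensor module2} and~\ref{prop:support}.
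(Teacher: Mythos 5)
Your proof is correct and matches the route the paper intends: the paper states the corollary without a written argument ("As an immediate corollary...") and expects exactly the combination of Proposition~\ref{prop:character of tensor module2} and Proposition~\ref{prop:support} applied to $v_\b$, together with the translation from standard tableaux on the duplicated ground set of $M(v_\b)$ to semistandard tableaux of content $\b$ with columns independent in $M(v)$. Your treatment of the degenerate loop case and the horizontal-strip bijection fills in the unstated combinatorial details faithfully.
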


\subsection{Schur-Weyl duality}\label{ssec:Schur-Weyl duality}
One can study the irreducible decomposition of the tensor module using
the representation theory of the symmetric group.

We will denote the cyclic $\Ss_n$-module in $(\kk^r)^{\otimes n}$
generated by $v_1 \otimes \dots \otimes v_n$ by $\Ss(v)$.
\begin{proposition}\label{prop:schurWeyl}
  The tensor module $G(v)$ is Schur--Weyl dual to $\Ss(v)$. That is,
  there are isomorphisms,
  \[
  \Hom_{\GL_r(\kk)}(G(v),(\kk^r)^{\otimes n}) \cong \Ss(v) \qquad
  \Hom_{\Ss_n}(\Ss(v),(\kk^r)^{\otimes n}) \cong G(v),
  \]
  of $\Ss_n$-modules and $\GL_r(\kk)$-modules, respectively.
\end{proposition}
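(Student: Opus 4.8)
The plan is to deduce the proposition from the classical Schur--Weyl double centralizer theorem together with a general observation about cyclic modules over a pair of mutually centralizing semisimple algebras. Write $W = (\kk^r)^{\otimes n}$ and $\omega = v_1 \otimes \cdots \otimes v_n$, and let $A$ and $B$ be the images of the group algebras $\kk[\GL_r(\kk)]$ and $\kk[\Ss_n]$ inside $\End_\kk(W)$. Since $\GL_r(\kk)$ is linearly reductive and $\Ss_n$ is finite in characteristic zero, both $A$ and $B$ are finite--dimensional semisimple algebras, $W$ is semisimple as a module over each of them, and Schur--Weyl duality asserts that they are one another's centralizers: $A = \End_B(W)$ and $B = \End_A(W)$. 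By definition $G(v) = A\omega$ and $\Ss(v) = B\omega$, and since the $\GL_r(\kk)$-action (resp.\ the $\Ss_n$-action) on $W$ factors through $A$ (resp.\ $B$), the $\GL_r(\kk)$-equivariant maps are precisely the $A$-module maps and the $\Ss_n$-equivariant maps are precisely the $B$-module maps.

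First I would establish the general fact: for any $\omega \in W$ there is a natural isomorphism $\Hom_A(A\omega, W) \cong B\omega$ of left $B$-modules. The relevant map is restriction, $\End_A(W) = B \to \Hom_A(A\omega, W)$. It is surjective because $W$ is a semisimple $A$-module, so the submodule $A\omega$ is a direct summand of $W$ and every $A$-linear map out of $A\omega$ extends to one on $W$. Its kernel consists of those $f \in B$ with $f(a\omega) = 0$ for all $a \in A$; since $f$ commutes with $A$ this reads $a\,f(\omega) = 0$ for all $a$, equivalently (take $a = \mathrm{id}$) $f(\omega) = 0$, so the kernel is $\ann_B(\omega)$. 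Hence $\Hom_A(A\omega, W) \cong B/\ann_B(\omega) \stackrel{\sim}{\to} B\omega$ via $f \mapsto f(\omega)$, and all three modules carry the left $B$-action by postcomposition/left multiplication, with respect to which every map appearing here is equivariant. Applied to our situation this yields $\Hom_{\GL_r(\kk)}(G(v), W) \cong \Ss(v)$ as $\Ss_n$-modules. Interchanging the roles of $A$ and $B$, which is legitimate precisely because the centralizer relation $A = \End_B(W)$, $B = \End_A(W)$ is symmetric, gives $\Hom_{\Ss_n}(\Ss(v), W) \cong G(v)$ as $\GL_r(\kk)$-modules.

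There is essentially no serious obstacle here: the substantive content is carried entirely by the double centralizer theorem, which I would invoke as a black box. The points that need care are purely bookkeeping: that the ``extend a map off a direct summand'' step uses only semisimplicity of $W$ as an $A$-module (not of $A$ itself, though that holds too); that the commutation identity $f(a\omega) = a\,f(\omega)$ is what collapses the annihilator condition to $f(\omega) = 0$; and that restriction intertwines postcomposition by $B$ on $\Hom_A(A\omega, W)$ with left multiplication on $B\omega$, so that the resulting bijections are genuinely isomorphisms of $\Ss_n$-modules and of $\GL_r(\kk)$-modules rather than merely of vector spaces.
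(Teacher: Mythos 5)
Your proof is correct and follows essentially the same route as the paper's: both define the isomorphism by evaluation $\varphi\mapsto\varphi(v_1\otimes\cdots\otimes v_n)$, extend $\varphi$ off the direct summand $G(v)$ to an element of $\End_{\GL_r(\kk)}(W)$ using semisimplicity, and then invoke Schur--Weyl duality to land in the image of $\kk\Ss_n$. You are merely more explicit than the paper about the kernel computation, the $B$-equivariance of the evaluation map, and the symmetry allowing the two statements to be proved by the same argument, but these are details the paper leaves to the reader rather than a different method.
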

\begin{proof}
  Either of the above isomorphisms is defined as
  \[
  \varphi \mapsto \varphi(v_1 \otimes \dots \otimes v_n).
  \]
  Such a homomorphism, say $\varphi : G(v) \to (\kk^r)^{\otimes n}$,
  extends to a map of $\GL_r(\kk)$-modules $\tilde\varphi
  \in \End_{\GL_r(\kk)}((\kk^r)^{\otimes n})$. Schur--Weyl duality \cite[Section~6.2]{fultonHarris} asserts
  that $\tilde\varphi \in \kk\Ss_n$. It follows that
  \[
  \varphi(v_1 \otimes \dots \otimes v_n) \in \Ss(v).
  \]
  The other isomorphism is proved similarly.
\end{proof}
Combining Propositions~\ref{prop:character of tensor module2} and
\ref{prop:schurWeyl} we obtain a second proof of Lemma~\ref{lem:row of zeros}. Indeed
the isomorphism type of $\Ss(v_\b)$ visibly does not change when we
embed $v$ in a matrix space with more rows.

The irreducible representations of $\Ss_n$ that can appear in
$(\kk^r)^{\otimes n}$, and hence $\Ss(v)$, are indexed by partitions
of $n$ with at most $r$ parts. The irreducible representations of
$\GL_r(\kk)$ that can appear in $G(v)$ are indexed by the exact same set of
partitions, as we have discussed.
\begin{corollary}\label{cor:schurWeyl}
  For any partition $\lambda$ of $n$, the multiplicity of $\lambda$ in
  $\Ss(v)$ is equal to the multiplicity of $\lambda$ in $G(v)$.
\end{corollary}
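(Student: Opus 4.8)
The plan is to deduce the Corollary directly from Proposition~\ref{prop:schurWeyl} together with the classical Schur--Weyl decomposition of $(\kk^r)^{\otimes n}$ as a bimodule for $\GL_r(\kk)\times\Ss_n$. Recall that
\[
(\kk^r)^{\otimes n} \;\cong\; \bigoplus_{\lambda} \SS^\lambda(\kk^r) \otimes V_\lambda,
\]
the sum over partitions $\lambda$ of $n$ with at most $r$ parts, where $V_\lambda$ is the Specht module, the irreducible $\Ss_n$-representation indexed by $\lambda$. Here each $\SS^\lambda(\kk^r)$ is an irreducible $\GL_r(\kk)$-module and each $V_\lambda$ an irreducible $\Ss_n$-module, and distinct $\lambda$ yield non-isomorphic modules on both sides.

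First I would write the $\GL_r(\kk)$-isotypic decomposition $G(v) \cong \bigoplus_\lambda \SS^\lambda(\kk^r)^{\oplus m_\lambda}$, where the $m_\lambda$ are the multiplicities to be computed and the sum again runs over partitions of $n$ with at most $r$ parts, since (as noted after Proposition~\ref{prop:schurWeyl}) no other $\lambda$ can occur in $G(v)$. Then, using Proposition~\ref{prop:schurWeyl} and Schur's lemma, I would compute
\[
\Ss(v) \;\cong\; \Hom_{\GL_r(\kk)}\!\big(G(v),(\kk^r)^{\otimes n}\big) \;\cong\; \bigoplus_{\lambda,\mu} \Hom_{\GL_r(\kk)}\!\big(\SS^\lambda(\kk^r)^{\oplus m_\lambda},\, \SS^\mu(\kk^r)\otimes V_\mu\big) \;\cong\; \bigoplus_\lambda V_\lambda^{\oplus m_\lambda},
\]
where the middle isomorphism uses that $\Hom_{\GL_r(\kk)}(\SS^\lambda(\kk^r),\SS^\mu(\kk^r))$ is $\kk$ when $\lambda=\mu$ and $0$ otherwise. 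These are isomorphisms of $\Ss_n$-modules by Proposition~\ref{prop:schurWeyl}, so reading off the multiplicity of the irreducible $V_\lambda$ on the right-hand side gives $m_\lambda$, which is exactly the multiplicity of $\SS^\lambda(\kk^r)$ in $G(v)$. This proves the claimed equality of multiplicities.

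This argument has essentially no obstacle: it is bookkeeping once Schur--Weyl duality is available. The only points needing a word of care are that the $\Hom$-spaces carry the asserted $\Ss_n$-action (which is part of the statement of Proposition~\ref{prop:schurWeyl}) and that the indexing set of partitions is the same on both sides (partitions of $n$ with at most $r$ parts), which was already observed. Alternatively one could argue purely with characters, extracting the coefficient of $t_1\cdots t_n$ from $\Hilb(X_v)$ via Proposition~\ref{prop:character of tensor module}; but the module-theoretic version above is cleaner and avoids any computation.
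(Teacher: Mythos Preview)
Your proposal is correct and is essentially the paper's own argument, made a bit more explicit. The paper phrases it as: the functor $\Hom_{\GL_r(\kk)}(-,(\kk^r)^{\otimes n})$ sends the irreducible indexed by $\lambda$ to the irreducible indexed by $\lambda$ and commutes with direct sums; you unpack this by writing out the Schur--Weyl decomposition of $(\kk^r)^{\otimes n}$ and applying Schur's lemma directly, which is exactly what underlies that statement.
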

\begin{proof}
  By another formulation of Schur--Weyl duality
  \cite[Section~6.1]{fultonHarris}, the functors
  $\Hom_{\Ss_n}(-,(\kk^r)^{\otimes n})$ and
  $\Hom_{\GL_r(\kk)}(-,(\kk^r)^{\otimes n})$ take an irreducible indexed by
  $\lambda$ to an irreducible indexed by $\lambda$. Since these
  functors commute with direct sums we are done.
\end{proof}

As a first application of Proposition~\ref{prop:character of tensor
  module} and Corollary \ref{cor:schurWeyl}, we extract from
Proposition~\ref{prop:K-polynomial of U_2,n} the character of the
tensor module for the uniform matroid in rank~2.

\begin{corollary}\label{cor:tensor module of U_2,n}
  Let $v \in \AA^{2 \times n}$ have uniform matroid. 
  The character of the tensor module $G(v)$ is
  \[
  s_{(n,0)}(u) + \sum_{\ell=1}^{n/2} (n-2\ell + 1)
  s_{(n-\ell,\ell)}(u).
  \]
\end{corollary}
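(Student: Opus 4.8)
The plan is to read off the character of $G(v)$ directly from Proposition~\ref{prop:K-polynomial of U_2,n}. Write $\chi$ for the character of $G(v)$ as a $\GL_2(\kk)$-module; it is symmetric and homogeneous of degree $n$, so $\chi=\sum_{\ell\geq0}c_\ell\,s_{(n-\ell,\ell)}(u)$ and it suffices to pin down the $c_\ell$. By Proposition~\ref{prop:character of tensor module}, $\chi$ is the coefficient of $t_1\cdots t_n$ in
\[
\Hilb(X_v)=\frac{\K(X_v)}{\prod_{i=1}^2\prod_{j=1}^n(1-u_it_j)}=\K(X_v)\cdot\prod_{j=1}^n\Big(\sum_{k\geq0}h_k(u_1,u_2)\,t_j^k\Big).
\]
The formula of Proposition~\ref{prop:K-polynomial of U_2,n} writes $\K(X_v)$ as a $\ZZ[u_1,u_2]$-combination of the squarefree symmetric polynomials $e_m(t)$, so to extract the coefficient of $t_1\cdots t_n$ I decide, for each column $j$, whether its $t_j$ comes from the $e_m(t)$ factor (the remaining product then contributes $h_0(u)=1$ at that column) or from $\prod_j(\cdots)$ (contributing $h_1(u)$). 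Using $s_{(a,b)}(1,1)=a-b+1$ this gives
\[
\chi=h_1(u)^n-\sum_{\substack{a\geq b\geq2\\ a+b\leq n}}(-1)^{a+b}(a-b+1)\binom{n}{a+b}\,h_1(u)^{\,n-a-b}\,s_{(a,b)}(u).
\]

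Next I would expand each $h_1(u)^{\,j}s_{(a,b)}(u)$ in the Schur basis. In two variables $s_{(a,b)}(u_1,u_2)=(u_1u_2)^b h_{a-b}(u_1,u_2)$ and $h_1 h_d=h_{d+1}+u_1u_2\,h_{d-1}$, so iterating yields
\[
h_1^{\,j}h_d=\sum_{d'\geq0}N_j(d,d')\,(u_1u_2)^{(j+d-d')/2}h_{d'},
\]
where $N_j(d,d')=\binom{j}{(j+d'-d)/2}-\binom{j}{(j+d'+d)/2+1}$ is the ballot number counting length-$j$ lattice paths from $d$ to $d'$ with $\pm1$ steps staying $\geq0$. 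A short computation then shows that the coefficient of $s_{(n-\ell,\ell)}(u)$ in $h_1^{\,n-a-b}s_{(a,b)}$ is $N_{n-a-b}(a-b,\,n-2\ell)$, and that in $h_1(u)^n$ is $N_n(0,\,n-2\ell)$. Therefore
\[
c_\ell=N_n(0,n-2\ell)-\sum_{\substack{a\geq b\geq2\\ a+b\leq n}}(-1)^{a+b}(a-b+1)\binom{n}{a+b}\,N_{n-a-b}(a-b,\,n-2\ell).
\]

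The hard part will be the final step: showing $c_0=1$ and $c_\ell=n-2\ell+1$ for $1\leq\ell\leq n/2$. After inserting the closed form of the ballot numbers this is a pure binomial-coefficient identity, and I would prove it either by Zeilberger's algorithm (with $\ell$, or $n$, as the free parameter) or by hand, fixing $\ell$ and $a$ and telescoping the inner sum over $b$ of the terms $(a-b+1)\binom{n-a-b}{\ell-b}$, which collapses the double sum to a single alternating sum in $a$ that can be matched against $N_n(0,n-2\ell)$. As a sanity check I would also specialize $u=(1,q)$: by Proposition~\ref{prop:K-polynomial of U_2,n} the left-hand side becomes an explicit rational function of $q$, and the claimed right-hand side is $\sum_{\ell}(n-2\ell+1)\,q^\ell(1+q+\cdots+q^{n-2\ell})$ with the $\ell=0$ coefficient adjusted to $1$, reducing everything to a finite polynomial identity in $q$; the cases $n\leq6$ already fix the pattern. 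In the write-up I would carry out this concluding identity compactly rather than case by case.
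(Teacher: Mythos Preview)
Your setup is correct: extracting the coefficient of $t_1\cdots t_n$ from the Hilbert series via Proposition~\ref{prop:K-polynomial of U_2,n} gives exactly the expression you write, and the ballot-number expansion of $h_1^{\,j}s_{(a,b)}$ in two variables is standard. However, you leave the decisive binomial identity for $c_\ell$ unproven, deferring it to Zeilberger's algorithm or an unspecified telescoping. While such a proof can certainly be carried out, it is the hardest step, and your proposal does not complete it.

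The paper sidesteps this computation entirely. Rather than expanding in the Schur basis, it specializes $u_1=u_2=1$ to compute only $\dim G(v)$, obtaining $(n^3+5n+6)/6$ via the closed form $p_i(1,1)=(-1)^{i+1}\binom{i-1}{3}$. It then invokes Schur--Weyl duality (Corollary~\ref{cor:schurWeyl}): the multiplicity of $(n-\ell,\ell)$ in $G(v)$ equals that in $\Ss(v)\subset(\kk^2)^{\otimes n}$, hence is bounded above by the multiplicity $n-2\ell+1$ of the Specht module in $(\kk^2)^{\otimes n}$; similarly the multiplicity of $(n)$ is at most~$1$. Summing these upper bounds against the dimensions $n-2\ell+1$ of the irreducibles $\SS^{(n-\ell,\ell)}(\kk^2)$ gives exactly $(n^3+5n+6)/6$, so every bound is attained. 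Your approach trades this structural observation for a direct combinatorial identity; it is more self-contained (it avoids Section~\ref{ssec:Schur-Weyl duality}) but considerably longer, and as written it is incomplete at precisely the point where the real work lies.
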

\begin{proof}
  To see this we take the coefficient of $e_n(t) = t_1\cdots t_n$ in
  the product of the $K$-polynomial $\K(X_v;u,t)$ from
  Proposition~\ref{prop:K-polynomial of U_2,n} with $1/\prod_{i=1}^n
  (1-u_1 t_i)(1-u_2 t_i)$. Writing
  \[
  \K(X_v;u,t) = 1+ p_4(u) e_4(t) - p_5(u) e_5(t) - \dots +(-1)^{n}
  p_n(u) e_n(t),
  \]
  we see that the coefficient of $e_n(t)$ in the product in question is
  \[
  (u_1+u_2)^n + \binom{n}{4}(u_1 + u_2)^{n-4} p_4(u) - \binom{n}{5}(u_1 + u_2)^{n-5}
  p_5(u) +\dots + (-1)^n p_n(u).
  \]
  Setting $u_1 = u_2 = 1$ in this formula tells us the dimension of
  $G(v)$. We use the fact that $p_i(1,1) = (-1)^{i+1}\sum_{k=2}^{i/2}
  (i-2k+1)^2 = (-1)^{i+1} \binom{i-1}{3}$. From this we obtain
  \[
  \dim G(v) = 2^n + \sum_{i=4}^n (-1)^{i+1} \binom{n}{i}\binom{i-1}{3} 2^{n-i}
  = (n^3 + 5n + 6)/6.
  \]
  Since the multiplicity of the Specht module indexed by $(n-k,k)$ in
  $(\kk^2)^{\otimes n}$ is at most $(n-2k+1)$, by Schur--Weyl duality,
  we see that the multiplicity of $(n-k,k)$ in $G(v)$ is likewise
  bounded. Also, the multiplicity of $(n)$ in $G(v)$ is at most one
  since $\Sym^n(\kk^2)$ is an irreducible $\GL_2(\kk)$ module. If any of
  these multiplicities were less than these trivial upper bounds we
  would have
  \[
  (n^3 + 5n + 6)/6 < (n+1)+\sum_{i=1}^{n/2} (n-2k+1)^2 = (n^3 + 5n + 6)/6.
  \]
  It follows that each multiplicity is as large as possible in $G(v)$,
  thus proving the proposition.
\end{proof}

Using Proposition~\ref{prop:K rank 2}, it is possible to extract the
isomorphism type of the tensor module of any rank two configuration $v
\in \AA^{2 \times n}$ with no zero columns. In practice, the
computation becomes an endless checking of cases. We state the result
here, referring the reader to \cite[Theorem~3.5.1]{ABthesis} for a
proof avoiding the technology of $K$-polynomials, and relying further
on Schur--Weyl duality.
\begin{proposition}\label{prop:tensor module rank 2}
  Let $v \in \AA^{2 \times n}$ have rank two and no columns equal to
  zero. Let $\mu =(\mu_1 \geq \mu_2 \geq \dots)$ denote the
  parallelism partition of $v$. Then, the character of the tensor
  module of $v$ is
  \[
  s_{(n,0)}(u) + \sum_{k=1}^{n/2} \max(\mu_1'+\dots+ \mu_k'-2k+1,0 ) s_{(n-k,k)}(u).
  \]
\end{proposition}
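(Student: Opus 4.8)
The plan is to prove Proposition~\ref{prop:tensor module rank 2} by combining the explicit $K$-polynomial formula of Proposition~\ref{prop:K rank 2} with the bridge between $K$-polynomials and tensor-module characters provided by Proposition~\ref{prop:character of tensor module}. Recall from the latter that the character of $G(v)$ is the coefficient of $t_1\cdots t_n$ in $\Hilb(X_v)$, where $\Hilb(X_v) = \K(X_v;u,t)/\prod_{i,j}(1-u_it_j)$; since $r=2$ here the denominator is $\prod_{i=1}^n (1-u_1t_i)(1-u_2t_i)$. So first I would expand $1/\prod_{i=1}^n(1-u_1t_i)(1-u_2t_i) = \sum_{\mathbf c}h_{\mathbf c}(u_1,u_2)\,t^{\mathbf c}$ — more precisely the coefficient of $t^{\mathbf a}$, for $\mathbf a\in\{0,1\}^n$ say with $|\mathbf a|=m$, involves $(u_1+u_2)$ raised to the number of missing coordinates times a product of $h$'s, but the cleanest bookkeeping is: the coefficient of $t_1\cdots t_n$ in $\K(X_v;u,t)\cdot\prod_i\frac1{(1-u_1t_i)(1-u_2t_i)}$ equals $\sum_{\mathbf b\in\{0,1\}^n}[t^{\mathbf b}]\K(X_v)\cdot(u_1+u_2)^{\,n-|\mathbf b|}$, because the complementary columns each contribute a factor whose degree-$1$ part in $t_i$ is $(u_1+u_2)$.

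Next I would substitute the formula from Proposition~\ref{prop:K rank 2}, namely $[t^{\mathbf b}]\K(X_v) = \sum_{0\le k\le|\mathbf b|/2} d_{k,\mathbf b}(v)\,s_{(|\mathbf b|-k,k)}(u)$ with the $d_{k,\mathbf b}$ given explicitly there. Thus the character of $G(v)$ becomes
\[
\sum_{\mathbf b\in\{0,1\}^n}\ \sum_{0\le k\le |\mathbf b|/2} d_{k,\mathbf b}(v)\,(u_1+u_2)^{n-|\mathbf b|}\,s_{(|\mathbf b|-k,k)}(u_1,u_2).
\]
Now I would use the Pieri/branching identity $(u_1+u_2)\,s_{(p,q)}(u_1,u_2) = s_{(p+1,q)}(u_1,u_2) + s_{(p,q+1)}(u_1,u_2)$ (valid in two variables, with the convention that $s_{(p,q)}=0$ when $q>p$) to expand each $(u_1+u_2)^{n-|\mathbf b|}\,s_{(|\mathbf b|-k,k)}$ into a sum of $s_{(n-j,j)}$'s with binomial-type multiplicities. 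Collecting the total coefficient of a fixed $s_{(n-j,j)}(u)$ then yields a finite sum over $(\mathbf b,k)$ of $\pm$ binomial coefficients. The $d_{0,\mathbf b}$ term contributes only from $\mathbf b = \mathbf 0$, giving the leading $(u_1+u_2)^n$ whose $s_{(n,0)}$-coefficient is $1$ and whose $s_{(n-j,j)}$-coefficient for $j\ge1$ is $\binom{n}{j}-\binom{n}{j-1}$; the $k\ge1$ terms must exactly cancel the excess so that what survives is $\max(\mu_1'+\cdots+\mu_j'-2j+1,\,0)$.

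The main obstacle will be this last cancellation: verifying that, after summing the signed binomial contributions coming from all $\mathbf b\in\{0,1\}^n$ and all $k$, one is left precisely with the stated coefficient $\max(\mu_1'+\cdots+\mu_j'-2j+1,0)$. Here $\mathbf b$ ranges over subsets, and $d_{k,\mathbf b}(v)$ depends on $v_{\mathbf b}$ only through its parallelism partition (the sizes of its rank-one flats intersected with the support of $\mathbf b$), so the sum over $\mathbf b$ naturally reorganizes into a sum over sub-multisets of the parallelism classes of $v$; I would group the $\mathbf b$'s by which parallel classes they meet and in what sizes, turning the alternating sum into a product of per-class generating functions, at which point a generating-function or induction-on-number-of-classes argument should collapse it. Since the $k=1$ case is already covered in greater generality by Theorem~\ref{thm:hooks in K} and the uniform case by Corollary~\ref{cor:tensor module of U_2,n}, I would use those as sanity checks and as the base cases, and for the general $k\ge2$ cancellation I would, as in the proof of Proposition~\ref{prop:K rank 2} itself, record the least degenerate case in detail and indicate that the remaining degenerate configurations follow by the same bookkeeping, pointing the reader to \cite[Theorem~3.5.1]{ABthesis} for the fully explicit Schur--Weyl-based alternative.
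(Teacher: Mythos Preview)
The paper does not actually prove this proposition. The text immediately preceding it says that extracting the tensor module from Proposition~\ref{prop:K rank 2} is possible ``in practice, the computation becomes an endless checking of cases,'' and then simply states the result, referring the reader to \cite[Theorem~3.5.1]{ABthesis} for a proof via Schur--Weyl duality that bypasses $K$-theory entirely.

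Your proposal is precisely the route the paper declines to present: start from the $K$-polynomial coefficients $d_{k,\mathbf b}(v)$ of Proposition~\ref{prop:K rank 2}, multiply against the Cauchy kernel, Pieri-expand, and collect coefficients of $s_{(n-j,j)}$. This is a legitimate strategy, and your setup through the Pieri step is correct. But you do not carry out the key step: you identify the signed-binomial cancellation yielding $\max(\mu_1'+\cdots+\mu_j'-2j+1,0)$ as ``the main obstacle,'' sketch that grouping $\mathbf b$ by intersection with parallel classes should work, and then defer the details --- which is exactly the ``endless checking of cases'' the paper warns about and is the entire content of the argument. Ending by pointing to \cite{ABthesis} puts you in the same position as the paper: no in-text proof, same external citation.

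So there is no error in your approach, but there is also no proof here beyond what the paper already offers; the cancellation you flag is the whole difficulty, and neither you nor the paper resolves it in-text. If you want a self-contained argument, the Schur--Weyl route of \cite{ABthesis} is what the paper recommends, and it avoids the $d_{k,\mathbf b}$ case analysis altogether.
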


\section{Hook shapes}\label{ssec:hooks}
A partition is called a \textbf{hook} if it has at most one part that
is not equal to one. The multiplicity of a hook shape $\lambda$ in
$\Ss(v)$ (equivalently, $G(v)$) is determined by the subcomplex of
non-broken circuit sets of $M(v)$. This is used to determine the certain coefficients of the $K$-polynomial $\K(X_v)$.

\subsection{The tensor module}
For a matroid $M$ with ground set contained in a totally ordered set,
a \textbf{broken circuit} of $M$ is a containment minimal dependent
set with its smallest ordered element removed.  A set is said to be an
\textbf{nbc set} if does not contain any of the broken circuits of
$M$. The collection of nbc sets of $M$ forms a subcomplex of $M$ whose
structure is well studied. The nbc sets of $M(v)$ are known to be
intimately related to the cohomology ring of the complement in
$(\kk^r)^*$ of the hyperplanes given by the vanishing of the linear
functionals $v_1,\dots v_n$ on $(\kk^r)^*$. In particular, enumerating the
nbc sets by corank yields the coefficients of the Poincar\'e
polynomial of this variety.

Here is our main result relating hook shapes and non-broken circuits.
\begin{theorem}\label{thm:hook in tensor module}
  The multiplicity of $(n-k+1,1^{k-1})$ in $\Ss(v)$ is the number of
  nbc bases of the truncation of $M(v)$ to rank $k$, if $k \leq
  \rk(M(v))$. It is zero otherwise.
\end{theorem}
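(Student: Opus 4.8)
The plan is to reduce the statement to the computation of a single coefficient of $\Hilb(X_v)$, recognize that coefficient as a valuative matroid invariant, show the right-hand side is one too, and pin down the common invariant on a spanning family of matroids.

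First I would clear away the reductions. If $M(v)$ has a loop then $v$ has a zero column, so $v_1\otimes\cdots\otimes v_n=0$, $\Ss(v)=0$, and the right side also vanishes; so assume $M(v)$ loopless. If $v$ has rank $r'<r$, then after a $\GL_r$-translate $v$ has its bottom $r-r'$ rows zero; writing $\bar v\in\AA^{r'\times n}$ for the top block we have $X_v=\rho^{\,r-r'}(X_{\bar v})$, so by Proposition~\ref{prop:row of zeros} and Lemma~\ref{lem:row of zeros} the expansion of $\Hilb(X_v)$ in the $t$-variables and in Schur polynomials has the \emph{same} coefficients $d_{\lambda,\a}$ as that of $\Hilb(X_{\bar v})$, merely with $s_\lambda(u_1,\dots,u_{r'})$ replaced by $s_\lambda(u_1,\dots,u_r)$. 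Since $s_\lambda$ vanishes when $\ell(\lambda)$ exceeds the number of variables, this already gives the ``zero otherwise'' case ($k>\rk M(v)$) and reduces everything to $v$ of full rank $r=\rk M(v)$ with $k\le r$. For such $v$, Corollary~\ref{cor:schurWeyl} and Proposition~\ref{prop:character of tensor module} identify the multiplicity of $(n-k+1,1^{k-1})$ in $\Ss(v)$ with the coefficient of $s_{(n-k+1,1^{k-1})}(u)\,t_1\cdots t_n$ in $\Hilb(X_v)=\K(X_v)\big/\prod_{i,j}(1-u_it_j)$.

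The key observation is that \emph{both sides are valuations of $M(v)$}. On the left: $\K(X_v)$ is a valuation by Proposition~\ref{prop:valuation}, and since $\prod_{i,j}(1-u_it_j)^{-1}$ is a fixed power series, the coefficient we want is a fixed $\ZZ$-linear combination of coefficients of $\K(X_v)$, hence a valuation. On the right: the number of nbc sets of $M$ of each size $i$ is the $i$-th Whitney number of the first kind, a coefficient of the characteristic polynomial $\chi_M(t)=\sum_i(-1)^i w_i(M)\,t^{\rk M-i}$ (valid for all $M$, both sides vanishing if $M$ has a loop), and $\chi_M$ is a valuation for each $t$, so each $w_i$ is. Because $T_k(M)$ has the same nbc sets of size $<k$ as $M$ and $\chi_{T_k(M)}(1)=0$, one gets $\#\{\text{nbc bases of }T_k(M)\}=\sum_{i=0}^{k-1}(-1)^{k-1-i}w_i(M)$, again a valuation. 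It therefore suffices to verify the identity on a family of matroids whose polytope indicator functions span the module of valuations — for which one takes the (realizable) Schubert matroids $M_\lambda$. For these, $\K(X_v)=\mathcal{G}_\lambda(u,t)$, the set-valued-tableau sum recalled in the proof of Theorem~\ref{thm:section}(b), and the nbc structure of $T_k(M_\lambda)$ (a shifted matroid) is explicit, so matching the two becomes a finite combinatorial check.

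The main obstacle is exactly that last check: extracting the coefficient of $s_{(n-k+1,1^{k-1})}(u)\,t_1\cdots t_n$ from $\mathcal{G}_\lambda/\prod(1-u_it_j)$ and recognizing it as the nbc-base count of the rank-$k$ truncation of a shifted matroid. (One should also confirm the standard fact that Schubert-matroid polytopes span the valuation module.) A more hands-on alternative avoids valuations: by Schur--Weyl duality the multiplicity equals the rank of the $\lambda$-isotypic component of $v_1\otimes\cdots\otimes v_n$, i.e.\ the dimension of the span of the vectors $c_T(v_1\otimes\cdots\otimes v_n)\in\SS^{(n-k+1,1^{k-1})}(\kk^r)$ as $T$ ranges over standard tableaux of hook shape — equivalently over the $(k-1)$-element ``legs'' $L\subseteq\{2,\dots,n\}$; symmetrizing the long arm of the hook collapses the configuration to its rank-$k$ data (the source of the truncation), and a straightening argument using the Pl\"ucker relations coming from the circuits of $T_k(M(v))$ identifies the span with the degree-$k$ part of the no-broken-circuit module of $T_k(M(v))$, of dimension the number of nbc bases. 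In this route the obstacle shifts to the bookkeeping of the hook Young symmetrizer together with the projection $\Sym^{n-k+1}\kk^r\otimes\bw^{k-1}\kk^r\twoheadrightarrow\SS^{(n-k+1,1^{k-1})}(\kk^r)$, and to showing the arm-symmetrization genuinely effects truncation.
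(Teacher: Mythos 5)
Neither of your two suggested routes matches the paper's argument, and the first has a concrete error while the second is left at the level of a sketch with the decisive steps unproven.

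The paper's proof applies the single Young element $e = \sum_{\sigma\in\Ss_{[k]},\tau\in\Ss_{[n]\setminus[k]}}(-1)^{\ell(\sigma)}\sigma\tau$, which (being a column antisymmetrizer times a row symmetrizer) projects $\kk\Ss_n$ onto the sum of the two Specht modules of shapes $\lambda_{n,k}$ and $\lambda_{n,k+1}$, so $\dim\Ss(v)e$ equals the sum of those two multiplicities. It then identifies $\Ss(v)e$ with $\Span\{(\bigwedge_{i\in I}v_i)\otimes\prod_{j\notin I}v_j : I\in\binom{[n]}{k}\}$. For $k=r$ this reduces to $\dim\Span\{\prod_{j\notin B}v_j : B\in\B(M(v))\}\subset\Sym^{n-r}(\kk^r)$, which is the number of nbc bases by Orlik--Terao's theorem (cited as \cite[Corollary~2.3]{tutte}), and $\lambda_{n,r+1}$ contributes $0$ since it has $r+1$ parts. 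For $k<r$ it projects $v$ generically to a rank-$k$ space to get a lower bound and cites \cite[Theorem~5.4]{whitney} for the matching upper bound.

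On your first (valuative) route there is a genuine gap. You write that for a Schubert matroid $M_\lambda$ one has $\K(X_v)=\mathcal G_\lambda(u,t)$. That is false. The Grothendieck polynomial $\mathcal G_\lambda$ is $s(\K(\Omega_\lambda))$, the class of the matrix Schubert variety, where $\Omega_\lambda$ is a $B$-orbit closure. If $v$ realizes $M_\lambda$, then $\K(X_v)=s(\K(\cl{\pi(v)T}))$, the image of the class of a torus orbit closure --- a proper subvariety of $\Omega_\lambda$ of much smaller dimension (at most $n-1$ versus the dimension of the Schubert cell). These classes have different codimensions and are not equal, so ``matching the two'' would fail. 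You would instead need an explicit formula for $\K(\cl{\pi(v)T})$ on Schubert matroids (Speyer has one), and also to justify that Schubert matroid polytopes span the valuative module; you flag the latter but both points require real work before the finite check is meaningful. Your second (symmetrizer) route is in spirit close to what the paper does, but you stop exactly where the content lives: you assert that ``arm-symmetrization effects truncation'' and that ``straightening via Pl\"ucker relations'' identifies the span with the degree-$k$ part of the nbc module, without proving either. The paper handles the truncation not by a symmetrization identity but by a geometric generic projection (to reduce $k<r$ to $k=r$), and the nbc identification not by straightening but by invoking the Orlik--Terao-type theorem on the span of products $\prod_{j\notin B}v_j$. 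So the essential content of the theorem is still missing from your proposal.
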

We will let $\lambda_{n,k}$ denote the hook shape that is a partition
of $n$ with length $k$, i.e., $\lambda_{n,k} = (n-k+1,1^{k-1})$.
\begin{proof}
The element
\[
\sum_{\substack{\sigma \in \Ss_{[k]}\\\tau \in \Ss_{[n]\setminus [k]}}}
(-1)^{\ell(\sigma)} \sigma \tau \in \kk\Ss_n
\] 
acts as a projector from $\kk\Ss_n$ to the sum of two irreducible
Specht modules, one of shape $\lambda_{n,k+1}$, % $(n-k,1^k)$
the other of shape $\lambda_{n,k}$. This follows from the Pieri rule
and the fact that the above element is a product of a row symmetrizer
and a column anti-symmetrizer.

Since $\Ss(v)$ is a cyclic module, it follows that the sum of the
multiplicities of $\lambda_{n,k+1}$ and $\lambda_{n,k}$ in it is the
dimension of the vector space
\[
\Ss(v)\sum_{\substack{\sigma \in \Ss_{[k]}\\\tau \in \Ss_{[n]\setminus [k]}}}
(-1)^{\ell(\sigma)} \sigma \tau \subset \bw^k (\kk^r) \otimes
\Sym^{n-k}(\kk^r) \subset (\kk^r)^{\otimes n}.
\]
The image of this space is spanned by the tensors
\[
\left\{\bigwedge_{i \in I} v_i \otimes \prod_{j \notin I} v_j : I \in
  \binom{[n]}{k}\right\}
\]
If $k=r$ then the wedges simply record whether $I$ is a basis of
$M(v)$. In this case we can forget the wedges and simply look at the
dimension of the vector space spanned by
\[
\left\{\prod_{j \notin B} v_j : B \in \B(M(v))\right\} \subset \Sym^{n-r}(\kk^r),
\]
where $\B(M(v))$ denotes the bases of $M(v)$. By a result of Orlik and
Terao (reproved as \cite[Corollary~2.3]{tutte}) the dimension of this
vector space is the number of nbc bases of $M(v)$, which agrees with
the statement of the theorem, since the hook $\lambda_{n,r+1}$ does
not appear in $\Ss(v)$ (even in $(\kk^r)^{\otimes n}$).

In case $k<r$ we project $v$ onto a generic $k$-dimensional subspace
through the origin, to obtain a new configuration $v'$. The
multiplicity of $\lambda_{n,k}$ in $\Ss(v')$ is the number of nbc
bases of the truncation of $M$ to rank $k$. Since $\Ss(v')$ is a
homomorphic image of $\Ss(v)$ this gives a lower bound for the
multiplicity of the length $k$ hook in $\Ss(v)$. It follows from
\cite[Theorem~5.4]{whitney} that this multiplicity in $\Ss(v)$ is at
most the number of nbc bases of the truncation of $M(v)$ to rank $k$
and from this the theorem follows.
\end{proof}

The Tutte polynomial of a matroid $M$ is the unique polynomial
$T_M = T_M(x,y) \in \ZZ[x,y]$ satisfying the conditions:
\begin{itemize}
\item[T1.] $T_M(x,y) = x$ if $M$ is rank zero on one
  element and $T_M(x,y) = y$ if $T$ is rank one on one element.
\item[T2.] $T_{M \oplus N} = T_M T_N$.
\item[T3.] If $e$ is neither a loop nor an isthmus of
  $M$, then $T_M = T_{M\setminus e} + T_{M/e}$.
\end{itemize}
It is well known that the Tutte evaluation $q^{\rk(M)}T_M(1+1/q,0)$ is
the generating function for the nbc sets of $M$ by their rank, as is
seen by appealing to the deletion contraction recurrence (T3).
\begin{corollary}\label{cor:hook hilbert}
  Let $d_{\lambda_{n,k},\mathbf{1}}$ denote the multiplicity of
  $\lambda_{n,k}$ in $G(v)$. Then
  \[
  \sum_{k = 0}^r d_{\lambda_{n,k},\mathbf{1}}q^{k-1}(q+1)= q^{\rk(M(v))}
  T_{M(v)}(1+1/q,0).
  \]
\end{corollary}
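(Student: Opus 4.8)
The plan is to unwind both sides into statements about the non-broken-circuit complex and then reconcile them. Write $M=M(v)$ and $r=\rk(M)$, and for a loopless matroid $N$ let $b_i(N)$ be the number of nbc sets of $N$ of size (equivalently, rank) $i$, so that the fact quoted just before the corollary reads $q^{\rk N}T_N(1+1/q,0)=\sum_i b_i(N)q^i$; thus the right-hand side of the corollary is $\sum_i b_i(M)q^i$. If $v$ has a zero column then $v_1\otimes\cdots\otimes v_n=0$, so $G(v)=0$ and every $d_{\lambda_{n,k},\mathbf 1}$ vanishes, while $M$ has a loop and hence $T_M(x,0)=0$; both sides are $0$, so I may assume $M$ is loopless. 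By Corollary~\ref{cor:schurWeyl} the multiplicity of $\lambda_{n,k}$ in $G(v)$ equals its multiplicity in $\Ss(v)$, which by Theorem~\ref{thm:hook in tensor module} is the number $d_k$ of nbc bases of the rank-$k$ truncation $M^{(k)}$ of $M$ when $1\le k\le r$, and is $0$ for $k=0$ and for $k>r$. So the corollary is the polynomial identity $\sum_k d_k\,q^{k-1}(q+1)=\sum_i b_i(M)\,q^i$, which after matching coefficients of $q^i$ amounts to the scalar identities $b_i(M)=d_i+d_{i+1}$ for all $i\ge 0$ (interpreting $d_0=d_{r+1}=0$).

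To prove $b_i(M)=d_i+d_{i+1}$ I would first establish the elementary observation that, for each $j$, the nbc sets of $M^{(j)}$ of size $<j$ are precisely the nbc sets of $M$ of the same size. This follows by describing the circuits of $M^{(j)}$ — they are the circuits of $M$ of size $\le j$ together with the $(j+1)$-subsets all of whose proper subsets are $M$-independent — and checking that among sets of size $<j$ the property of containing a broken circuit of $M^{(j)}$ coincides with that of containing a broken circuit of $M$. Combined with the quoted Tutte fact applied to $M^{(j)}$ (whose only nbc sets of size $j$ are its bases) this gives $q^jT_{M^{(j)}}(1+1/q,0)=\sum_{i<j}b_i(M)q^i+d_j\,q^j$ for $1\le j\le r$. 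Now I use the vanishing $\sum_i b_i(N)(-1)^i=0$ for any loopless $N$ with at least one element (this is $T_N(0,0)=0$, or equivalently $\chi_N(1)=0$: the Möbius function of the lattice of flats sums to zero). Evaluating the last display at $q=-1$ for $M^{(j)}$ and for $M^{(j+1)}$ and subtracting yields $b_k(M)=d_k+d_{k+1}$ for $0\le k\le r-1$, while evaluating it for $M^{(r)}=M$ itself gives $b_r(M)=d_r$. Substituting back, $\sum_k d_k q^{k-1}(q+1)=\sum_k d_k q^k+\sum_k d_k q^{k-1}$ regroups as $\sum_i(d_i+d_{i+1})q^i=\sum_i b_i(M)q^i=q^rT_M(1+1/q,0)$, which is the corollary.

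The only genuine content is the circuit-level comparison of the nbc complexes of $M$ and of its truncations below the truncation rank; everything else is bookkeeping with the two quoted facts and the classical identity $\chi_N(1)=0$. A variant that avoids even this lemma is to compute the characteristic polynomial of the truncation directly from the lattice of flats, obtaining $\chi_{M^{(j)}}(0)=-\sum_{i<j}w_i(M)$ with $w_i(M)=(-1)^i b_i(M)$ the Whitney numbers of the first kind, hence $d_j=(-1)^{j-1}\sum_{i<j}(-1)^i b_i(M)$, from which $d_k+d_{k+1}=b_k(M)$ is an immediate cancellation; this trades the combinatorial lemma for a little more Möbius-function machinery.
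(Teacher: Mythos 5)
Your proposal is correct, and it reduces the statement to the same coefficientwise identity the paper uses, namely $b_k(M)=d_k+d_{k+1}$, where $b_k$ counts size-$k$ nbc sets of $M$ and $d_j$ counts nbc bases of the rank-$j$ truncation $M^{(j)}$. What differs is how this identity is proved. The paper establishes it by an explicit, direct bijection anchored at the least-ordered element $1$: among size-$k$ nbc sets of $M$, those already containing $1$ are precisely the nbc bases of $M^{(k)}$, and adjoining $1$ to the others produces exactly the nbc bases of $M^{(k+1)}$ (using that every nbc base contains the smallest element and that broken circuits never contain it). You instead compare the nbc complexes of $M$ and $M^{(j)}$ below the truncation rank via the circuit description of truncations, write down the nbc generating function of each $M^{(j)}$, and then invoke the M\"obius-theoretic vanishing $T_N(0,0)=0$ (for nonempty loopless $N$) at $q=-1$ to extract $b_k=d_k+d_{k+1}$ by telescoping. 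Both arguments are sound; the paper's bijection is shorter and avoids the auxiliary circuit-level lemma and the $\chi_N(1)=0$ input, while your route is more mechanical and would generalize painlessly to other valuative or M\"obius-type identities between a matroid and its truncations. One small expositional point: your telescoping step, as stated for $1\le j\le r$, directly covers only $1\le k\le r-1$, so the $k=0$ case ($b_0=d_1$, i.e., $d_1=1$) and the $k=r$ case ($b_r=d_r$) each require the one-line boundary check you sketch; with the convention $d_0=d_{r+1}=0$ and noting the $j=0$ display is trivially $0=0$, this is immediate, but it is worth saying explicitly.
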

\begin{proof}
  First, $q^{\rk(M(v))}T_{M(v)}(1+1/q,0)$ is the generating function
  for nbc sets of $M(v)$ by their rank. Next, the number of nbc bases
  of the truncation of $M(v)$ to rank $k$ plus the number of nbc bases
  of the truncation of $M(v)$ to rank $k+1$ is the number of nbc sets
  of $M(v)$ of size $k$. To see this add the element $1$ to each size
  $k$ nbc set of $M(v)$ that does not already contain it. The sets
  that already contained $1$ were the nbc bases of the truncation to
  rank $k$, the other sets correspond to the nbc bases of the
  truncation to rank $k+1$.
\end{proof}

% \begin{example}
%   Consider the matrix 
%   \[
%   v =
%   \begin{bmatrix}
%     1 & 0 & 0 & 1 & 1 & 0 \\
%     0 & 1 & 0 & 1 & 0 & 1 \\
%     0 & 0 & 1 & 0 & 1 & 0
%   \end{bmatrix} \in \AA^{3 \times 6}.
%   \]
%   The multiplicity of $\lambda = (4,1,1)$ in the irreducible
%   decomposition of $G(v)$ is the number of nbc bases of $M(v)$, which is
%   $7$. The nbc bases of $M(v)$ are
%   \[
%   123,125,126, 134, 145, 146, 156,
%   \]
%   where we write $ijk$ for $\{i,j,k\}$.

%   The multiplicity of $\lambda =(5,1)$ in $G(v)$ is the number of nbc bases of
%   the truncation of $M(v)$ to rank $2$. This truncation is $U_{2,6}$,
%   which has $5$ nbc bases (the size two subsets containing $1$). The
%   multiplicity of $(5,1)$ in $G(v)$ is, thus, $5$.
% \end{example}

% \begin{example}
%   The multiplicity of $\lambda = (n-1,1)$ in $G(v)$ is the number of
%   parallelism classes of $M(v)$, minus one.
% \end{example}
Reformulating the above result in terms of the multigraded Hilbert series of $X_v$ yields the following result.
\begin{corollary}\label{cor:tutte2}
Write
\[
\Hilb(X_v) = \sum_{\lambda \in \Par_r, \b \in \NN^n}
  d_{\lambda,\b} s_\lambda(u) t^\b.
\]
Then,
\[
\sum_{k=0}^r d_{\lambda_{n,k},\b} q^{k-1}(q+1) = q^{\rk(M(v_\b))}
T_{M(v_\b)}(1+1/q,0).
\]  
\end{corollary}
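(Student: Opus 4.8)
The plan is to recognize Corollary~\ref{cor:tutte2} as nothing more than Corollary~\ref{cor:hook hilbert} applied to each parallel extension $v_\b$ of $v$, translated through the dictionary between the character of the tensor module $G(v_\b)$ and the multigraded Hilbert series of $X_v$.

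First I would invoke Proposition~\ref{prop:character of tensor module2}: the coefficient of $t^\b$ in $\Hilb(X_v)$ is exactly the character of $G(v_\b)$. Expanding that coefficient in the Schur basis shows that $d_{\lambda,\b}$ is precisely the multiplicity of the irreducible $\GL_r(\kk)$-module $\SS^\lambda(\kk^r)$ in $G(v_\b)$; in particular $d_{\lambda_{n,k},\b}$ is the multiplicity of the length-$k$ hook in $G(v_\b)$. (The usual degree bookkeeping — every monomial of $R$ of multidegree $(\a,\b)$ has $|\a|=|\b|$, so $s_\lambda(u)t^\b$ can occur only when $|\lambda|=|\b|$ — means the hook $\lambda_{n,k}$ should be read as $\lambda_{|\b|,k}$, matching the fact that $v_\b$ is a configuration of $|\b|$ vectors; the identity is only substantive for those $\b$ with $|\b|=n$.)

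Next I would apply Corollary~\ref{cor:hook hilbert} verbatim, but taking the configuration $v_\b\in\AA^{r\times|\b|}$ in place of $v$. Since $v_\b$ has the same number of rows, $|\b|$ columns, and matroid $M(v_\b)$, that corollary yields
\[
\sum_{k=0}^r d_{\lambda_{|\b|,k},\mathbf 1}(v_\b)\,q^{k-1}(q+1) = q^{\rk(M(v_\b))}\,T_{M(v_\b)}(1+1/q,0),
\]
where $d_{\lambda_{|\b|,k},\mathbf 1}(v_\b)$ is the multiplicity of the length-$k$ hook in $G((v_\b)_{\mathbf 1})=G(v_\b)$. Combining this with the identification $d_{\lambda_{|\b|,k},\mathbf 1}(v_\b)=d_{\lambda_{|\b|,k},\b}(v)$ from the previous step gives the claimed formula.

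There is no real obstacle in this argument beyond keeping the two roles of "number of columns" straight — $n$ for $v$ versus $|\b|$ for $v_\b$ — so that Proposition~\ref{prop:character of tensor module2} is applied consistently to both $v$ and $v_\b$; everything else is a direct transcription of Corollary~\ref{cor:hook hilbert}.
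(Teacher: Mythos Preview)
Your proposal is correct and follows essentially the same route as the paper: identify $d_{\lambda,\b}$ with the multiplicity of $\lambda$ in $G(v_\b)$ via Proposition~\ref{prop:character of tensor module2}, then apply Corollary~\ref{cor:hook hilbert} to the configuration $v_\b$. The paper's proof additionally mentions Corollary~\ref{cor:schurWeyl}, but as you observe this is not needed since Corollary~\ref{cor:hook hilbert} is already stated for $G(v)$; one small quibble is that your remark ``the identity is only substantive for those $\b$ with $|\b|=n$'' sits oddly with your (correct) reading $\lambda_{|\b|,k}$, under which the identity is meaningful for every $\b$.
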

\begin{proof}

  The coefficient in question is the multiplicity of
  $\lambda_{|\b|,k}$ in $G(v_\b)$. Since this is the multiplicity of
  $\lambda_{|\b|,k}$ in $\Ss(v)$, by Corollary~\ref{cor:schurWeyl},
  the result follows from Proposition \ref{prop:character of tensor module2} and Corollary~\ref{cor:hook hilbert}.
  % In general, the coefficient in question is dimension of the
  % $(\a,\b)$ graded piece of $\cl{\GL_r v T}$, where $\a =
  % (|\b|-k+1,1,\dots,1,0,\dots,0)$. A basis for $R_{(\a,\b)}$ consists of
  % polynomials of the form
  % \[
  % \det(x_{((1,2,\dots,k),\b')}) \prod_{j=1}^n x_{1,j}^{\b_i - \b'_i}
  % \]
  % where $\b'$ is some $0$-$1$ vector whose support has size $k$ and is
  % contained in the support of $\b$. The notation $x_{I,J}$ means
  % select the submatrix of $x = (x_{i,j})$ with rows $I$ and columns
  % $J$. However, since the determinant only sees the support
  % $\underline{\b}$, we see that $R_{(\a,\b)}$ is isomorphic (as a
  % $\kk$-vector space) to $R_{(\a',\underline{\b})}$ where $\a' =
  % (|\underline{\b}|-k+1,1,\dots,1,0,\dots,0)$. The isomorphism is
  % division by $\prod_{i=1}^n x_{1,j}^{\b_j -\underline{\b}_j}$. The
  % corollary now follows.
\end{proof}

\subsection{Hooks and the multivariate Tutte polynomial}\label{ssec:multivariate hooks}
In this section we use the multivariate Tutte polynomial of Sokal to
describe the hook shapes that appear in the $K$-polynomial of a matrix
orbit closure.

We start with a definition, given a matroid $M$ with ground set $[n]$
we define
\[
\widetilde{Z}_M(q;t_1,\dots,t_n) = \sum_{\b \in \{0,1\}^n} q^{-\rk(M|\b)} t^b.
\]
This is the \textbf{multivariate Tutte polynomial} of $M$, due to
Sokal \cite{sokal}, which is also known to statistical physicists as
the $q$-state Potts model partition function.

\begin{lemma}[Ardila--Postnikov {\cite[Lemma~6.6]{ardilaPostnikov}}]\label{lem:ardilaPostnikov}
  The generating function for the Tutte polynomials of $M|\b$, as
  $\b$ ranges over $\NN^n$, can be expressed as
  \begin{multline*}
    \sum_{\b \in \NN^n} (x-1)^{-\rk(M|\b)} T_{M|\b}(x,y)t^\b\\ =
    \frac{1}{\prod_{j=1}^n
      (1-t_j)}\widetilde{Z}_M\left( (x-1)(y-1);\frac{(y-1)t_1}{1-yt_1},\dots,\frac{(y-1)t_n}{1-yt_n} \right).
  \end{multline*}
\end{lemma}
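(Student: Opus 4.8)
The plan is to prove the identity by expanding both sides in their defining subset sums and matching; this is a short generating-function manipulation (and one can of course also just invoke \cite[Lemma~6.6]{ardilaPostnikov}). The starting point is the Whitney rank generating function form of the Tutte polynomial of a matroid $N$ on ground set $E(N)$,
\[
T_N(x,y) = \sum_{A\subseteq E(N)} (x-1)^{\rk(N)-\rk(A)}(y-1)^{|A|-\rk(A)},
\]
so that $(x-1)^{-\rk(N)}\,T_N(x,y) = \sum_{A\subseteq E(N)} (x-1)^{-\rk(A)}(y-1)^{|A|-\rk(A)}$, a sum of the same shape as the one defining $\widetilde{Z}_N$. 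I would apply this with $N=M|\b$, which I take to mean the matroid obtained from $M$ by replacing element $j$ with $\b_j$ parallel copies (deleting $j$ when $\b_j=0$), so that it agrees with the meaning used in $\widetilde{Z}_M$ when $\b\in\{0,1\}^n$. Its rank function sends a subset $A$ to $\rk_M(S(A))$, where $S(A)\subseteq[n]$ is the set of $j$ such that $A$ contains a copy of $j$. Thus the left-hand side becomes
\[
\sum_{\b\in\NN^n}\ \sum_{A\subseteq E(M|\b)} t^\b\,(x-1)^{-\rk_M(S(A))}(y-1)^{|A|-\rk_M(S(A))},
\]
which I would re-read as a sum over all ways of choosing, for each $j\in[n]$, a multiplicity $\b_j\ge 0$ together with a subset $A_j$ of the $\b_j$ parallel copies of $j$.

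The next step is to group this sum by $S=\{\,j:A_j\neq\emptyset\,\}$. Writing $(y-1)^{|A|-\rk_M(S)}=(y-1)^{-\rk_M(S)}\prod_j(y-1)^{|A_j|}$ pulls the only matroidal factor, $\bigl((x-1)(y-1)\bigr)^{-\rk_M(S)}$, out front, and the rest factors over $j$. For $j\notin S$ the factor is $\sum_{\b_j\ge 0}t_j^{\b_j}=(1-t_j)^{-1}$, while for $j\in S$ it is the geometric-series computation
\[
\sum_{\b_j\ge 1} t_j^{\b_j}\!\!\!\sum_{\emptyset\neq A_j\subseteq[\b_j]}\!\!\!(y-1)^{|A_j|}
=\sum_{\b_j\ge 1}\bigl((yt_j)^{\b_j}-t_j^{\b_j}\bigr)
=\frac{1}{1-t_j}\cdot\frac{(y-1)t_j}{1-yt_j}.
\]
Collecting the common factor $\prod_{j=1}^n(1-t_j)^{-1}$, the left-hand side becomes
\[
\frac{1}{\prod_{j=1}^n(1-t_j)}\ \sum_{S\subseteq[n]}\bigl((x-1)(y-1)\bigr)^{-\rk_M(S)}\ \prod_{j\in S}\frac{(y-1)t_j}{1-yt_j},
\]
and the inner sum is, by the definition of the multivariate Tutte polynomial, exactly $\widetilde{Z}_M\bigl((x-1)(y-1);\,\tfrac{(y-1)t_1}{1-yt_1},\dots,\tfrac{(y-1)t_n}{1-yt_n}\bigr)$, which is the claim.

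Every step is routine once set up, so the care needed is entirely in the bookkeeping. The one point worth attention is the precise meaning of $M|\b$ for $\b\in\NN^n$ together with the identity $\rk_{M|\b}(A)=\rk_M(S(A))$ — in particular, that adjoining parallel elements, including copies of a loop, never changes the rank — since after that every manipulation is just summing geometric series. One should also fix the ambient ring: the sums over $\NN^n$ are identities of formal power series in $t_1,\dots,t_n$ with coefficients rational in $x$ and $y$, and the asserted equality is then the identity of rational functions obtained after clearing the denominators $\prod_j(1-t_j)$ and $\prod_j(1-yt_j)$.
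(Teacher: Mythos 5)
Your argument is correct. The paper does not actually prove this lemma — it simply cites Ardila--Postnikov \cite[Lemma~6.6]{ardilaPostnikov} — so there is no in-text argument to compare against; your direct verification, expanding $T_{M|\b}$ via the Whitney rank generating function, grouping by the support set $S=\{j:A_j\neq\emptyset\}$, and summing the resulting geometric series, is a valid self-contained proof, and your interpretation of $M|\b$ for $\b\in\NN^n$ (element $j$ replaced by $\b_j$ parallel copies, so that $\rk_{M|\b}(A)=\rk_M(S(A))$) correctly matches the paper's usage of $v_\b$ in Corollary~\ref{cor:tutte2} and the proof of Theorem~\ref{thm:hooks in K}.
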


In what follows we will work in the subring $S$ of
$\ZZ[u_1,\dots,u_r,t_1,\dots,t_n]^{\Ss_r}$ where the $u$-degree of a
polynomial equals its $t$-degree.

Using Lemma~\ref{lem:row of zeros} we can unambiguously extend the
Hilbert series of $X_v$ to a \textit{symmetric function} in the
infinitely many variables $u_1,u_2,\dots$, with coefficients in
$\ZZ[[t_1,\dots,t_n]]$. From this symmetric function we mod out those
Schur functions in $u$ that are not hook shapes. It is a consequence
of the Littlewood--Richardson rule that this quotient of $S$ is
isomorphic, as a ring, to $\ZZ[[q, t_1 ,\dots,t_n]]$. The image of
$s_{\lambda_{|\b|,k}}(u)t^\b$ under this isomorphism is
$q^{k-1}(q+1)t^\b$.

We take the image of the Hilbert series $\Hilb(X_v)$ in
$\ZZ[[q,t_1,\dots,t_n]]$, which has the form
\[
\sum_{\b \in \NN^n}\sum_{k=0}^\infty d_{\lambda_{|b|,k}}
q^{k-1}(q+1)t^\b =
\sum_{\b \in \NN^n}q^{\rk(M(v)|\b)}T_{M(v)|\b}(1+1/q,0) t^\b,
\]
the second equality being Corollary~\ref{cor:tutte2}.
Lemma~\ref{lem:ardilaPostnikov} 
% Apply the lemma with y = 0, x = q^{-1} + 1 then we get
% Z( -q^{-1} , -t )/prod( 1- t_j) = 
% sum_b q^{rk b} T_{M|b}(1+1/q,0)t^b.
then condenses the sum to
\[
\frac{1}{\prod_{j=1}^n(1-t_j)} \widetilde{Z}_{M(v)}( -q^{-1}  ;
-t_1,\dots,-t_n).
\]

We can now state our result on hook shapes in terms of the
$K$-polynomial of $X_v$.
\begin{proposition}\label{prop:fullEnumeratorHooks}
  The enumerator of hook shapes in the $K$-polynomial of $X_v$ is
  \[
  \prod_{j=1}^n \frac{1 -(q+1)t_j + q(q+1)t_j^2 - q^2(q+1)t_j^3 +\cdots }{(1-t_j)} \widetilde{Z}_{M(v)}( -q^{-1} ; -t_1,\dots,-t_n).
  \]
  in the following sense: The coefficient of $q^{k-1}(q+1)t^\b$, $k
  \leq r$, is equal to the coefficient of
  $s_{\lambda_{|\b|,k}}(u_1,\dots,u_r) t^\b$ in the $K$-polynomial.
\end{proposition}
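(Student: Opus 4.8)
The plan is to begin from the factorization $\K(X_v)=\Hilb(X_v)\cdot\prod_{i=1}^r\prod_{j=1}^n(1-u_it_j)$ and pass to infinitely many $u$ variables. Let $\widetilde{\K}$ be the product of the Hilbert series of $X_v$, extended to $u_1,u_2,\ldots$ as in the discussion preceding the statement, with the series $\prod_{i\geq1}\prod_{j=1}^n(1-u_it_j)$. Then $\widetilde{\K}|_{u_{r+1}=u_{r+2}=\cdots=0}=\K(X_v)$: by Lemma~\ref{lem:row of zeros} this specialization preserves the Schur expansion of the extended Hilbert series on its terms, all of which have at most $r$ parts, and it turns each surplus factor $1-u_it_j$ with $i>r$ into $1$. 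I would then apply to $\widetilde{\K}$ the ring homomorphism $\Phi$ extracting hook shapes: the composite of the quotient map of $S$ by the ideal of non-hook Schur functions with the ring isomorphism of that quotient onto $\ZZ[[q,t_1,\ldots,t_n]]$ recalled above, under which $s_{\lambda_{|\b|,k}}(u)\,t^\b\mapsto q^{k-1}(q+1)\,t^\b$. Since $\Phi$ is multiplicative, $\Phi(\widetilde{\K})=\Phi(\Hilb(X_v))\cdot\Phi\bigl(\prod_{i\geq1}\prod_j(1-u_it_j)\bigr)$, and the first factor was already identified in the paragraph before the statement as $\frac{1}{\prod_j(1-t_j)}\,\widetilde{Z}_{M(v)}(-q^{-1};-t_1,\ldots,-t_n)$.

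The remaining work is to compute $\Phi$ of $\prod_{i\geq1}\prod_j(1-u_it_j)$. For a single index $j$, $\prod_{i\geq1}(1-u_it_j)=\sum_{k\geq0}(-1)^ke_k(u)\,t_j^k$ is the generating function of the elementary symmetric functions; since $e_k=s_{(1^k)}=s_{\lambda_{k,k}}$ we have $\Phi(1)=1$ and $\Phi(e_k(u)\,t_j^k)=q^{k-1}(q+1)\,t_j^k$ for $k\geq1$, so
\[
\Phi\Bigl(\prod_{i\geq1}(1-u_it_j)\Bigr)=1-(q+1)t_j+q(q+1)t_j^2-q^2(q+1)t_j^3+\cdots.
\]
Taking the product over $j=1,\ldots,n$ and multiplying by $\Phi(\Hilb(X_v))$ gives exactly $\prod_{j=1}^n\frac{1-(q+1)t_j+q(q+1)t_j^2-\cdots}{1-t_j}\cdot\widetilde{Z}_{M(v)}(-q^{-1};-t_1,\ldots,-t_n)$, the expression in the statement. (One may note in passing that $1-(q+1)t_j+q(q+1)t_j^2-\cdots=\frac{1-t_j}{1+qt_j}$, so $\Phi(\widetilde{\K})$ collapses further to $\prod_j(1+qt_j)^{-1}\,\widetilde{Z}_{M(v)}(-q^{-1};-t_1,\ldots,-t_n)$.)

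Finally I would read off the coefficients, which is where the hypothesis $k\leq r$ enters. From $\widetilde{\K}|_{u_{>r}=0}=\K(X_v)$, a hook $\lambda_{|\b|,k}$ with $k\leq r$ parts is unaffected by the specialization, so the coefficient of $s_{\lambda_{|\b|,k}}(u_1,\ldots,u_r)\,t^\b$ in $\K(X_v)$ equals the coefficient of $s_{\lambda_{|\b|,k}}(u)\,t^\b$ in $\widetilde{\K}$, and $\Phi$ sends the latter to the coefficient of $q^{k-1}(q+1)\,t^\b$ in $\Phi(\widetilde{\K})$. The polynomials $q^{k-1}(q+1)$, $k\geq1$, have pairwise distinct degrees, so this coefficient is unambiguous, and the claimed reading of the formula follows.

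I do not expect a genuine obstacle: the substance is already assembled in Corollary~\ref{cor:tutte2}, Lemma~\ref{lem:ardilaPostnikov}, and the ring isomorphism of $S$ modulo non-hook Schur functions with $\ZZ[[q,t_1,\ldots,t_n]]$. The only point demanding care is the bookkeeping of the last two steps — that passing from $r$ rows to infinitely many leaves the hook coefficients unchanged precisely for $k\leq r$, and that the multiplicativity of $\Phi$ does split $\Phi(\widetilde{\K})$ into the two factors above.
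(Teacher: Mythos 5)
Your proposal is correct and takes essentially the same approach as the paper: the paper's one-line proof says to ``push the denominator $\prod_{j=1}^n\prod_{i=1}^\infty(1-u_it_j)$ of the Hilbert series into $\ZZ[[q,t_1,\ldots,t_n]]$,'' which is exactly the computation of $\Phi\bigl(\prod_{i\geq1}\prod_j(1-u_it_j)\bigr)$ that you carry out via the generating function for elementary symmetric functions. Your explicit treatment of the specialization $u_{>r}=0$, and the observation that this is what confines the statement to $k\leq r$, is a helpful fleshing-out of the paper's terse argument but not a different route.
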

\begin{proof}
  This follows from the definition of $K$-polynomial, after we push
  the denominator of the Hilbert series
  $\prod_{j=1}^n\prod_{i=1}^\infty(1-u_i t_j)$ into
  $\ZZ[[q,t_1,\dots,t_n]]$.
\end{proof}
\begin{theorem}\label{thm:hooks in K}
  Take $\b \in \{0,1\}^n$. The coefficient of
  $s_{\lambda_{|\b|,k}}(u)t^\b$ in the $K$-polynomial of $X_v$ is $(-1)^k$ if $\b$ indexes a rank $k-1$ dependent set of
  $M(v)$. It is zero otherwise.
\end{theorem}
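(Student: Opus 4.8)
The plan is to read the answer straight off Proposition~\ref{prop:fullEnumeratorHooks}, which already isolates the hook part of $\K(X_v)$ as an explicit rational function. First I would simplify the prefactor there: the geometric‑series identity
\[
1 - (q+1)t_j + q(q+1)t_j^2 - q^2(q+1)t_j^3 + \cdots \;=\; \frac{1-t_j}{1+qt_j}
\]
turns the enumerator of Proposition~\ref{prop:fullEnumeratorHooks} into
\[
\prod_{j=1}^n \frac{1}{1+qt_j}\;\cdot\;\widetilde{Z}_{M(v)}(-q^{-1};-t_1,\dots,-t_n),
\]
and expanding $\tfrac{1}{1+qt_j}=\sum_{a\geq 0}(-q)^at_j^a$ together with $\widetilde{Z}_{M(v)}(-q^{-1};-t_1,\dots,-t_n)=\sum_{C\subseteq[n]}(-q)^{\rk(M(v)|C)}(-1)^{|C|}\,\prod_{j\in C}t_j$ gives a clean product of two power series whose $t^\b$-coefficient we can read off.

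Because $\b\in\{0,1\}^n$, extracting the coefficient of $t^\b$ only involves writing $\supp(\b)$ as a disjoint union $C\sqcup A$ (taking $t_j$ from the $\widetilde Z$-factor for $j\in C$ and from $\tfrac{1}{1+qt_j}$ for $j\in A$), so with $S:=\supp(\b)$,
\[
[\,t^\b\,]\;=\;(-1)^{|S|}\sum_{C\subseteq S}(-1)^{\rk(M(v)|C)}\,q^{\,|S|-|C|+\rk(M(v)|C)}.
\]
Rewriting the exponent as $|S|-\nu(M(v)|C)$, where $\nu=|C|-\rk$ is the nullity, identifies the inner sum up to sign with a specialization of the Whitney rank generating function of the restriction $M(v)|S$, hence of its Tutte polynomial:
\[
[\,t^\b\,]\;=\;(-1)^{\nu(M(v)|S)}\,q^{|S|}\,T_{M(v)|S}\!\bigl(0,\,1+\tfrac1q\bigr).
\]
Then I would use the standard facts that $T_M(0,y)=0$ whenever $M$ has a coloop (which kills the coefficient whenever $S$ is independent, and more generally whenever some element of $S$ lies outside the closure of the rest), and $T_M(0,0)=0$ for nonempty $M$ (so $q^{|S|}T_{M(v)|S}(0,1+\tfrac1q)$ is divisible by $q+1$, exactly as a hook‑enumerator must be). Expanding this polynomial in the basis $\{q^{k-1}(q+1)\}_{k\geq 1}$ and comparing with the image $q^{k-1}(q+1)t^\b$ of $s_{\lambda_{|\b|,k}}(u)t^\b$ yields $d_{\lambda_{|\b|,k},\b}$; when $M(v)|S$ is a dependent set one finds $T_{M(v)|S}(0,y)$ has bottom term $y$, producing the claimed value at $k=\rk(M(v)|S)+1$, and the "zero otherwise" assertion is precisely the coloop‑vanishing of $T_M(0,y)$.

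The main obstacle is the final combinatorial step: pinning down which coefficient of $q^{k-1}(q+1)$ survives, i.e.\ computing $q^{|S|}T_{M(v)|S}(0,1+\tfrac1q)/(q+1)$ accurately enough. I would carry this out by induction on $|S|$ using deletion–contraction (T3) and direct‑sum multiplicativity (T2) for the Tutte polynomial, matching the resulting recursion against the one that Corollary~\ref{cor:tutte2} imposes on the hook coefficients; alternatively, one can pass through the broken‑circuit description of $T_M(0,y)$ and reduce to the count of nbc bases of truncations appearing in Theorem~\ref{thm:hook in tensor module}. Everything else—the geometric‑series simplification, the $0/1$ coefficient extraction, and the bookkeeping of signs—is routine once the Tutte specialization is in hand.
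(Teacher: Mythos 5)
Your starting point coincides with the paper's---both go through Proposition~\ref{prop:fullEnumeratorHooks}---but from there you take a genuinely different route. The paper reduces modulo $\langle t_1^2,\dots,t_n^2\rangle$ to get $\FakeDep$ and then argues by exhibiting a deletion--contraction recurrence that $\FakeDep$ and the ansatz $\Dep$ are both supposed to satisfy; you instead sum the geometric series and extract the $t^\b$ coefficient directly as the Tutte specialization
\[
[t^\b]\;=\;(-1)^{\nu(M(v)|S)}\,q^{|S|}\,T_{M(v)|S}\!\left(0,\,1+\tfrac1q\right),\qquad S=\supp(\b),
\]
and this computation is correct. It is also a more transparent description of the enumerator than the $\FakeDep=\Dep$ step.

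The gap is in what you call the ``routine'' final step. You expect $T_{M(v)|S}(0,y)$ to have bottom term $y$ for dependent $S$, so that expanding $(-1)^\nu q^{|S|}T(0,1+\tfrac1q)/(q+1)$ in powers of $q$ singles out $k=\rk(M(v)|S)+1$ with the stated coefficient. Neither part holds. For coloop-free $M|S$ the bottom term of $T_{M|S}(0,y)$ is $y^{c}$ with $c$ the number of connected components, and $T_{M|S}(0,y)$ has degree $\nu(M|S)$ in $y$; consequently the expansion is supported on the whole interval $\rk(M|S)+1\le k\le |S|$ whenever $\nu(M|S)>1$, not on a single $k$. And at the lowest nonzero index $k=\rk(M|S)+1$ the coefficient is $(-1)^{\nu(M|S)}$ times the leading coefficient of $T_{M|S}(0,y)$, which is $1$, so one gets $(-1)^{\nu(M|S)}=(-1)^{|\b|-k+1}$ rather than $(-1)^k$. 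This is a concrete discrepancy, not an off-by-one in your accounting: for $v=\left(\begin{smallmatrix}1&1\\0&0\end{smallmatrix}\right)\in\AA^{2\times 2}$ one has $\K(X_v)=1-u_1u_2t_1t_2$, so the coefficient of $s_{(1,1)}(u)\,t_1t_2$ is $-1$ while $(-1)^k=(-1)^2=+1$, and your own Tutte formula correctly gives $(-1)^1 q^2(1+\tfrac1q)=-q(q+1)$. (Pressing on the paper's argument meets the same obstruction: $\Dep$ does not actually satisfy the stated deletion--contraction recurrence, already for two parallel elements.) So the Tutte-specialization reformulation is sound and attractive, but the closing bookkeeping is not routine, and carrying it out honestly does not reproduce the statement as written.
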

\begin{proof}
  Since we are only interested in the square-free monomials in the
  $t$'s, we work modulo $\<t_1^2,\dots,t_n^2\>$.  By
  Proposition~\ref{prop:fullEnumeratorHooks}, the image of the
  $K$-polynomial in $\ZZ[[q,t_1,\dots,t_n]]/\<t_1^2,\dots,t_n^2\>$ is
  \[
  \prod_{j=1}^n \frac{1-t_j(1+q)}{(1-t_j)} \widetilde{Z}_{M(v)}( -q^{-1} ;
  -t_1,\dots,-t_n) \equiv \prod_{j=1}^n (1-t_j q) \widetilde{Z}_{M(v)}( -q^{-1} ;
  -t_1,\dots,-t_n).
  \]
  Give the right-hand side of this equality the name
  $\FakeDep_{M(v)}(q;t_1,\dots,t_n)$. By \cite[Eq.~(4.18a)]{sokal} we
  see that $\FakeDep$ satisfies the recurrence
  \[
  \FakeDep_{M(v)} = (1-t_i q)\FakeDep_{M(v-v_i)}
  + t_i q \FakeDep_{M(v/v_i)}, \quad (v_i \neq 0).
  \]
  We define the multivariate polynomial
  \[
  \Dep_{M(v)}(q;t_1,\dots,t_n) = 1+\sum_{\substack{\b \in \{0,1\}^n,
      \b \neq (0,\dots,0)\\\rk M(v_\b) < |\b| }} (-1)^{\rk M(v_\b)}
  q^{\rk M(v_\b)-1}(q+1) t^\b.
  \]
  The sum is over the dependent sets of $M(v)$. It is straight forward
  that $\Dep$ satisfies the same recurrence as $\FakeDep$. Further, if
  $v_i$ is the zero vector then
  \[
  \Dep_{M(v)}(q;t_1,\dots,t_n) = (1-qt_i)  \Dep_{M(v-v_i)}(q;t_1,\dots,t_n),
  \]
  and likewise for $\FakeDep$. Since both $\FakeDep$ and $\Dep$
  evaluate to $1$ when $v=(v_1)$, $v_1 \neq 0$, and to $(1-t_1q)$ when
  $v = (0)$ it follows that they are equal in general. The theorem
  follows by taking the coefficient of $q^{\rk(M(v_\b))-1}(q+1) t^\b$ in
  $\Dep_{M(v)}$.
\end{proof}

\subsection*{Acknowledgements}
We thank
Dave Anderson, % re transversality
Calin Chindris, % re exceptional sequences in quivers, which seems to say something about this
Ezra Miller, 
Leonardo Mihalcea, % re a slimmed-down version of the mixing space argument, and the Leray sequence, etc.
Richard Rim\'anyi, 
Seth Sullivant % whom I bounced lots of things off at the very least
and Alex Woo % who pointed me towards $\PP^1$-bundle arguments
% and presumably others
for useful discussions,
as well as the anonymous referees who pointed out errors in 
earlier claimed proofs of Conjecture~\ref{conj:main}.

\def\cprime{$'$}

\end{document}